\documentclass{article}
\usepackage{graphicx} 

\usepackage[utf8]{inputenc}
\usepackage{amsfonts}
\usepackage{amsmath}
\usepackage{amsthm}
\usepackage{amssymb}
\usepackage{latexsym}
\usepackage{enumerate}
\usepackage{centernot}
\usepackage{amscd}
\usepackage{hyperref}
\usepackage{extpfeil}

\newtheorem{theorem}{Theorem}[section]
\newtheorem{lemma}[theorem]{Lemma}
\newtheorem{proposition}[theorem]{Proposition}
\newtheorem{corollary}[theorem]{Corollary}

\newtheorem{remark}[theorem]{Remark}

\newtheorem{question}[theorem]{Question}

\newtheorem{definition}[theorem]{Definition}
\newtheorem{obs}[theorem]{Remark}

\newtheorem{defi}[theorem]{Definition}

\newtheorem{pro}[theorem]{Proposition}

\newcommand{\pin}[2]{\langle #1, #2 \rangle}

\newcommand{\Sp}{\mathrm{Sp}}

\newcommand{\N}{\mathbb{N}}
\newcommand{\R}{\mathbb{R}}

\newcommand{\C}{\mathbb{C}}
\newcommand{\Z}{\mathbb{Z}}
\newcommand{\U}{\mathcal U}

\newcommand{\Id}{\operatorname{Id}}

\title{On complex structures and  uniqueness of
algebra norms in Banach spaces.}

\author{W. Cuellar Carrera  \and  V. Ferenczi}

\date{}

\begin{document}

\maketitle

\begin{abstract} For $X$ an infinite dimensional Banach space, we contribute to the study of the Banach algebra 
$L(X)/S(X)$, where $S(X)$ is the ideal of strictly singular operators.
    We extend results of Ferenczi-Galego \cite{FGEven} by proving that $\|I-J\|_S \geq 2$, whenever $I$ is a complex structure on a real space $X$ and $J$ extends a complex structure on a hyperplane of $X$, and where $\|.\|_S$ denotes a certain algebra norm on $L(X)/S(X)$ dominated by the usual quotient norm $\|.\|$. We solve two questions of Kalton-Swanson \cite{kaltonasymplectic} by proving that if $X=Z_2$ the Kalton-Peck space, then $L(Z_2)/S(Z_2)$ a) is not complete for $\|.\|_S$ and
    b) that it is not *-isomorphic to a $C^*$-algebra for $\|.\|$. In particular $L(Z_2)/S(Z_2)$ admits two inequivalent *-algebra norms.

\end{abstract}

\let\thefootnote\relax\footnotetext{2020 \textit{Mathematics Subject Classification}: 47L10, 46H10, 47L20, 46M18} 

   \let\thefootnote\relax\footnotetext{ \textit{Key words and phrases:}  Operator ideals, complex structures, algebra norms, Kalton-Peck space}
    
\let\thefootnote\relax\footnotetext{Both authors were supported by the São Paulo Research Foundation (FAPESP), grant no. 2023/12916-1. V. Ferenczi was also supported by CNPq, grant no. 304194/2023-9.}

\section{Introduction}

\subsection{Operator Ideals}
In this work, Banach spaces are assumed to be either real or complex. However, when dealing with complex structures, the spaces will usually assumed to be real. The term subspace will always refer to a closed linear subspace, unless explicitly stated otherwise.
 Operator ideals, in the sense of Pietsch \cite{Pietsch1980}, are classes of  operators between Banach spaces that are closed under addition, composition with bounded operators,  and that contain all finite-rank operators. A proper operator ideal contains only finite-rank invertible operators. Classical examples include the ideals of compact operators ($\mathcal{K}$), strictly singular operators ($\mathcal{S}$), and inessential operators ($\mathcal{IN}$).

 Let $L(X,Y)$ denote the space of bounded linear operators between Banach spaces $X$ and $Y$.  An operator $T \in L(X,Y)$ is \textit{strictly singular} if its restriction to any infinite-dimensional subspace of $X$ fails to be an isomorphism onto its range. The collection of all such operators will be denoted by $S(X,Y)$; when $Y = X$, we simply write $S(X)$.
$T\in L(X,Y)$ is said \textit{finitely singular} (or upper semi-Fredholm) if  there exists a finite codimensional subspace $Z$ of $X$ such that $T|_Z$ is an isomorphism onto its range $TZ$. It is  \textit{infinitely singular} otherwise, which is equivalent to saying that for every $\varepsilon >0$ there exists an infinite dimensional subspace $Z \subset X$ such that $\|T|_Z\| < \varepsilon$ \cite[Proposition 3.2]{Maurey}. 
When $T$ is finitely singular then its index is defined as $\operatorname{ind}(T) := \dim(\ker T) - \dim(\mathrm{coker}\, T)$, and is well-defined in $\Z \cup \{-\infty\}$; furthermore this index is continuous (\cite[Corollary 4.1]{Maurey}).

The ideal $\mathcal{IN}$ of \textit{inessential operators} consists of operators $R \in L(X,Y)$ for which $\Id_X - TR$ is Fredholm for all $T \in L(Y,X)$ (or equivalently, $\Id_Y - RT$ is Fredholm for all $T \in L(Y, X)$).  Here, an operator is \textit{Fredholm} if it is upper semi-Fredholm and its image has finite codimension. The index of a Fredholm operator $T$ is therefore an element of $\Z$.  A key property of $\mathcal{IN}$ is the stability of Fredholm operators under perturbations: if $F \in L(X,Y)$ is Fredholm and $R \in \mathcal{IN}(X,Y)$, then $F + R$ remains Fredholm (with the same index). These ideals satisfy the inclusion $\mathcal{K} \subseteq \mathcal{S} \subseteq \mathcal{IN}$.

A fundamental question posed by Pietsch \cite{Pietsch1980} asked whether $\mathcal{IN}$ is the largest proper operator ideal. The second name author \cite[Theorems 2--3]{Ferenczi2023} solved this problem negatively, demonstrating that no largest proper operator ideal (in the sense of Pietsch) exists.

\subsection{Twisted sums}
We refer the reader to \cite{CabelloCastillo} for a comprehensive treatment of homological theory in Banach spaces. A short exact sequence of quasi-Banach spaces is a diagram of quasi-Banach spaces and operators 

 \begin{equation} \label{exact1} \begin{CD} 
0@>>>  Y@>i>> X @>\pi>> Z@>>> 0\end{CD} \end{equation}
such that the image of each arrow coincides with the kernel of the subsequent one. The middle space $X$ is usually called a \textit{twisted sum} of $Y$ and $Z$. It follows from the open mapping theorem that $X$ contains a copy of $Y$ such that the corresponding quotient is isomorphic to $Z$.
Two short exact sequences are considered equivalent provided that the subspace and quotient are fixed:
\begin{definition}
Two exact sequences 
$$\begin{CD}
0@>>>  Y@>i_1>> X_1 @>\pi_1>> Z@>>> 0,\end{CD}$$

$$\begin{CD}
0@>>>  Y@>i_2>> X_2 @>\pi_2>> Z@>>> 0,\end{CD}$$
are \textit{equivalent} if there exists an isomorphism $T: X_1 \to X_2$ such that the diagram is commutative
$$\begin{CD}
0@>>>  Y@>i_1>> X_1 @>\pi_1>> Z@>>> 0\\
 &&@| @VVTV @| \\
0@>>> Y@>i_2>>  X_2@>\pi_2>> Z@>>>0
\end{CD}$$
\end{definition}
The exact sequence  (\ref{exact1}) is \textit{trivial} if it is equivalent to $$\begin{CD}
0@>>>  Y@>>> Y\oplus Z @>>> Z@>>> 0\end{CD},$$ 
which occurs exactly when $i(Y)$ is complemented in $X$ or there exists a bounded linear section $s:Z\to X$ ($\pi\circ s= \Id_Z$) \cite[Lemma 2.1.5]{CabelloCastillo}. 

According to the theory developed by Kalton and Peck \cite{KaltonPeck}, twisted sums of quasi-Banach spaces are in correspondence with certain nonlinear maps.

\begin{definition}
    A homogeneous map $\Omega:Z\to Y$ is \textit{quasi-linear} if 
    \[
\|\Omega(z_1+z_2) - \Omega(z_1) - \Omega(z_2)\| \leq C(\|z_1\| + \|z_2\|)
\]
for some constant $C>0$ and every $z_1,z_2 \in Z$. Denote by $c(\Omega)$ the optimal constant $C$ that satisfies the condition above. 
\end{definition}  

Let us denote  by $Y \oplus_\Omega Z$  the product space $Y\times Z$  endowed with the quasi-norm $\|(y,z)\|_\Omega = \|y - \Omega z\|_Y + \|z\|_Z$. In this case, the sequence
$$\begin{CD}
0@>>>  Y@>i>> Y\oplus_\Omega Z @>\pi>> Z@>>> 0\end{CD},$$ 

where $i(y)=(y,0)$ and $\pi(y,z)=z$, is exact making 
$Y$  isometric to its copy in $Y \oplus_\Omega Z$  such that the respective quotient is also isometric to $Z$.
Kalton and Peck proved that every short exact sequence is equivalent to one of the type above for some quasi-linear map. 

A quasi-linear map $\Omega$ is trivial when the correspondent twisted sum is trivial, which is equivalent to write $\Omega = L + B$ for some $L$  linear and $B$  bounded maps \cite[Lemma 3.3.2]{CabelloCastillo}.

In \cite{KaltonPeck}, Kalton and Peck introduced a construction of nontrivial quasi-linear maps $\ell_p \to \ell_p$ for the entire range $0 < p < \infty$. The fundamental examples arising from this theory are the \emph{Kalton--Peck spaces} $Z_p = \ell_p \oplus_{\Omega_p} \ell_p$, where the quasi-linear map $\Omega_p$ is defined on vectors of finite support by
\[
\Omega_p(x)(n) = x(n)\log\frac{\|x\|_p}{|x(n)|}.
\]
This construction yields a nontrivial twisted sum of $\ell_p$ with itself (see also \cite[Proposition~3.2.7]{CabelloCastillo}  for more details). Observe that $Z_p$ is a quasi-Banach space for $0 < p \le 1$, while, in general, any twisted sum of $\ell_p$ with itself can be renormed to be a Banach space for $1 < p < \infty$ (\cite[Proposition~3.4.5]{CabelloCastillo}).

An important class of nontrivial exact sequences are those for which the quotient map is a strictly singular operator. 

\begin{definition}
    An exact sequence $ 0 \to   Y \to  X \to  Z \to 0$ is \textit{singular} if the quotient map is a strictly singular operator. A quasi-linear map  $\Omega:Z\to Y$ is singular if the corresponding exact exact sequence  $ 0 \to   Y \to  Y\oplus_\Omega Z \to  Z \to 0$ is singular.
\end{definition}
It follows that a quasi-linear map $\Omega:Z\to Y$ is singular if and only if its restriction $\Omega|_W$ remains non-trivial for every infinite dimensional subspace $W \subseteq Z$ (i.e, the copy of $Y$ is not complemented $\pi^{-1}(W)$) \cite[Proposition 9.1.2] {CabelloCastillo}. It was shown by Kalton and Peck \cite[Theorem 6.4]{KaltonPeck} that the quasi-linear maps $\Omega_p$ are singular for $1<p<\infty$. The same holds for $p=1$ \cite{CMp1} and $0<p<1$ \cite{CCS}.

 Kalton and Peck also established a duality for the Banach spaces $Z_p$ ($1<p<\infty$). Specifically, they proved in \cite[Theorem 5.1]{KaltonPeck} (See also \cite[Proposition 3.8.5]{CabelloCastillo}) that the dual space $Z_p^*$ is isomorphic to $Z_q$, where $\tfrac{1}{p} + \tfrac{1}{q} = 1$. The duality is realized through the bilinear form
\[
B((w,z),(x,y)) = \langle y, w \rangle  - \langle x, z \rangle,
\]
for $ (w,z)  \in Z_p$ and $ (x,y)\in Z_q$. 
In this framework, for every bounded linear operator $S \in L(Z_p)$, one can define its adjoint operator $S^* \in L(Z_q)$ through the above duality. More precisely, $S^*$ is the unique operator satisfying
\[
B(S(w,z),(x,y)) = B((w,z),S^*(x,y))
\qquad \text{for all } (w,z)  \in Z_p,\;  (x,y) \in Z_q.
\]

An important open question is whether $Z_2$ is isomorphic to its hyperplanes. Although Banach's general hyperplane problem was solved by Gowers \cite{gowers}, see also the class of HI spaces defined by Gowers and Maurey \cite{gowersmaurey}, the case of $Z_2$ remains famously open.

\begin{question}\label{hyp} Show that Kalton-Peck space $Z_2$ is not isomorphic to its hyperplanes.
\end{question}

\subsection{Complex structures}
A complex structure on a real Banach space $X$ is a structure that endows $X$ with a structure of complex vector space, preserving the original real structure and its topology. Note that complex structures could be considered on complex spaces as well, by only considering the underlying $\R$-linear structure; but in this setting the original complex structure does not play any specific role. So in what follows in this subsection, we assume $X$ is real.

Up to renormings, complex structures on $X$ are in a bijective correspondence with $\R$-linear operators $J: X\to X$ such that $J^2=-\Id$. For each such operator $J$, we denote by $X^J$ the space $X$ with the $\C$-linear structure defined by $ix:= Jx$ endowed with the (equivalent) norm
\[\|| x |\| = \sup_{0\leq  \theta \leq 2\pi} \| \cos \theta x + \sin \theta Jx\|. \]
One canonical construction is the complexification $X_\mathbb{C} = (X \oplus X)^J$ associated to the complex structure  $J(x, y) = (-y, x)$ on $X^2$. 
For an operator $T: X \to Y$ between real Banach spaces,  $\widehat{T}\in L(X_\C, Y_\C)$ denotes its complexification defined by $\widehat{T}(x,y)= (Tx, Ty)$.

Two complex structures $I$ and $J$ on $X$ are equivalent if there exists a $\R$-linear isomorphism $T: X \to X$ such that $J= T^{-1} I T$, meaning the associated complex spaces $X^I$ and $X^J$ are $\C$-linear isomorphic through $T$. This tool is central in the classification of complex structures up to isomorphism. For instance, the real spaces $\ell_p$, $L_p(0,1)$, $c_0$, $C([0,1])$ have unique complex structure (a result of Kalton appearing in \cite{FG}).

A lot of structure exists on the real space $Z_2$. For example as a space isomorphic to its square, $Z_2$ admits  complex structures \cite{CCFM}. It also admits a non-trivial symplectic structure \cite{kaltonasymplectic}. By studying these in more details one may hope to be able to prove that the hyperplanes of $Z_2$ lack at least one of these structures, thus answering Problem \ref{hyp}. For example, Ferenczi-Galego \cite{FGEven} define a space to be even if it admits complex structures but its hyperplanes do not:

\begin{question} Prove that the real  version of Kalton-Peck space is even.
\end{question}

In \cite{CCFM}, as a partial answer, it is proved that no complex structure on the hyperplanes of $Z_2$ containing the copy of $\ell_2$ can leave this copy invariant. An intermediary result is that for a singular twisted sum $X=Y \oplus_\Omega Z$, a complex structure on $Y$ cannot extend simultaneously to complex structures on $X$ and on a hyperplane of $X$.

In this paper we investigate further relations between complex structures on a space and its hyperplane, proving an extension of the above result of \cite{CCFM} to $\varepsilon$-singular twisted sums for an appropriate notion of $\varepsilon$-singularity.
This is based on an extension of a result of \cite{FGEven}, in which we replace an algebraic estimate by a universal quantitative estimate for the difference between a complex structure $I$ on $X$ and a complex structure $J$ on one of its hyperplanes.
Precisely, for an operator $T \in L(Y,X)$, let $\|T\|_S$ denotes the infimum of $\varepsilon>0$ such that any  subspace of $Y$ admits a subspace $Z$ such that $\|T_{|Z}\| \leq \varepsilon$. We prove:

\begin{theorem}\label{16} If $X$ is a real Banach space, $I$ a complex structure on $X$ and $J$ a complex structure on its hyperplane $H$, then
$\|I_{|H}-J\|_S \geq 2$.
\end{theorem}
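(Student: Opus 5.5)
The plan is to recover the classical parity obstruction of \cite{FGEven}, namely that an operator of odd index cannot split into two conjugate pieces of equal index, using only the weaker hypothesis $\|I_{|H}-J\|_S<2$ instead of strict singularity. Suppose for contradiction that $\|I_{|H}-J\|_S<2$. Complexify: $X_\C=X_+\oplus X_-$ and $H_\C=H_+\oplus H_-$, where $X_\pm$ and $H_\pm$ are the $\pm i$-eigenspaces of $\widehat I$ and $\widehat J$. The corresponding projections are $P_I=\frac12(\Id-i\widehat I)$ and $P_J=\frac12(\Id-i\widehat J)$. Write $\iota:H_\C\to X_\C$ for the inclusion. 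Then
$$\iota P_J-P_I\iota=\tfrac{i}{2}\,(\widehat I_{|H_\C}-\widehat J),$$
so the hypothesis becomes $\|\iota P_J-P_I\iota\|_S<1$, provided we can pass from $\|\cdot\|_S$ of an operator to $\|\cdot\|_S$ of its complexification.

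\textbf{Step 1 (complexification).} Show that $\|\widehat T\|_S\le\|T\|_S$, or at least $\le\|T\|_S$ up to an arbitrarily small error, which is all the strict inequality needs. Given a subspace $W\subset H_\C$, I would pass to a further subspace on which $W$ is, up to a small perturbation, of the form $V\oplus iV$ or contains such a piece. This uses a standard block or gliding-hump argument for the real-linear projections $(x,y)\mapsto x$ and $(x,y)\mapsto y$. Then choose $Z\subset V$ with $\|T_{|Z}\|\le\varepsilon$. Because of the $\sup_\theta$ definition of the complex norm, $\widehat T$ has norm at most $\varepsilon$ on $Z+iZ$. I expect this to be the main obstacle. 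The difficulty is that the constant $2$ is sharp, so no loss of a constant factor can be tolerated. If the direct argument loses a factor, I would instead run Steps 2--3 over $\R$, using the real operators $\frac12(\Id\pm IJ)$-type decompositions in place of the eigenprojections.

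\textbf{Step 2 (semi-Fredholm perturbation).} Establish the lemma that if $\|R\|_S<1$, then $\Id-tR$ is upper semi-Fredholm for every $t\in[0,1]$. If it were not, there would be a subspace $Z$ with $\|(\Id-tR)_{|Z}\|<\delta$. Then $\|R z\|\ge(1-\delta)\|z\|$ on $Z$, hence on every subspace of $Z$, which contradicts $\|R\|_S<1$. By continuity of the index \cite[Corollary 4.1]{Maurey}, the index is constant along the path, and it equals $0$ at $t=0$. The same reasoning applies to operators between different spaces that are $\|\cdot\|_S$-close to an upper semi-Fredholm operator with controlled lower bound.

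\textbf{Step 3 (parity).} Decompose $\iota$ in block form with respect to $H_\C=H_+\oplus H_-$ and $X_\C=X_+\oplus X_-$. Its off-diagonal blocks are $P_I\iota(\Id-P_J)$ and $(\Id-P_I)\iota P_J$. Both equal $\pm(P_I\iota-\iota P_J)$ composed with projections, so they have small $\|\cdot\|_S$. Using Step 2 on the path that removes the off-diagonal part, $\iota$ has the same index as the diagonal operator $A\oplus A'$, where $A=P_I\iota_{|H_+}:H_+\to X_+$ and $A'=(\Id-P_I)\iota_{|H_-}:H_-\to X_-$. Complex conjugation on $X_\C$ and $H_\C$ exchanges the $\pm$ pieces and intertwines $A$ with $A'$ antilinearly, so kernel and cokernel dimensions match and $\operatorname{ind}A=\operatorname{ind}A'$. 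The total index is therefore even. On the other hand, $\iota:H_\C\to X_\C$ is injective with complex codimension $1$, so its index is $-1$. This contradiction yields $\|I_{|H}-J\|_S\ge2$.
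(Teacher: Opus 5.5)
There is a genuine gap, and it sits exactly where the sharp constant is decided. Your Step 2 and the parity count in Step 3 are fine. The trouble is the homotopy $\iota-sO$, $s\in[0,1]$, from $\iota$ to its block-diagonal part $\Delta$. It stays upper semi-Fredholm only if $\rho_s(O)<1$, the lower bound of $\iota$, and this does not follow from $\|\iota P_J-P_I\iota\|_S<1$. The off-diagonal blocks are $D=\iota P_J-P_I\iota$ composed with $P_I,\Id-P_I,P_J,\Id-P_J$. These projections are not contractive in general (their norms are of order $(1+\|I\|)/2$ and $(1+\|J\|)/2$), and $O$ is a sum of two such pieces. Concretely, $\Delta=\frac12(\iota-\widehat I\iota\widehat J)$ and
\[
O=\iota-\Delta=\tfrac12\bigl(\iota+\widehat I\iota\widehat J\bigr)=\tfrac12\,\widehat d\,\widehat J=-\tfrac12\,\widehat I\,\widehat d,\qquad d:=Ii_{H,X}-i_{H,X}J .
\]
So what your argument actually yields is only $\|I_{|H}-J\|_S\ge 2/\|J\|_S$ (or $2/\|I\|$), a bound that depends on the structures rather than the universal $2$. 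Your real fallback with $\frac12(\Id\pm IJ)$-type operators is the same computation, since $\frac12(i_{H,X}+Ii_{H,X}J)=\frac12 dJ$, and it has the same loss.

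Step 1 is a separate unproved point, and your suggested route cannot work. The eigenspace $H_+=\{(x,-Jx):x\in H\}$ stays at distance $\gtrsim(1+\|J\|)^{-1}$ from the unit vectors of $H\times\{0\}$. Hence no subspace of $H_+$ is close to, or contains something close to, a $V+iV$. The paper explicitly remarks that it does not know whether complexification preserves $\varepsilon$-strong singularity.

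The missing idea is to control everything through a \emph{square}. This is constant-free, and it also makes complexification unnecessary. Take a projection $p_H$ onto $H$ and set $A=p_HIi_{H,X}$. Then one has exactly
\[
(A+J)^2+(A-J)^2=-4\Id_H+F,\qquad F=-2p_HI(\Id-p_H)Ii_{H,X},
\]
where $F$ has rank at most one. Suppose $\|I_{|H}-J\|_S<2$. Then $\rho_s((A-J)^2)\le\|A-J\|_S^2<4$, so $(A-J)^2$ is $C$-inessential for some $C>1/4$ by Proposition~\ref{propo}. Hence $-\frac14(A+J)^2=\Id_H+\frac14(A-J)^2-\frac14F$ is Fredholm of index $0$, and therefore so is $A+J$. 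But $A+J=p_H(I_{|H}+i_{H,X}J)$, where $I_{|H}+i_{H,X}J$ is $\C$-linear from $H^J$ to $X^I$ (Lemma~\ref{par}) and $p_H$ has index $1$. So $A+J$ has odd index whenever it is Fredholm, which is a contradiction. This is the real form of Kato's relation $\bigl(PQ+(1-P)(1-Q)\bigr)\bigl(QP+(1-Q)(1-P)\bigr)=1-(P-Q)^2$ for (finite-rank perturbations of) idempotents. What you need is this intertwiner together with its partner, controlled by $\|(P-Q)^2\|_S\le\|P-Q\|_S^2$. A linear homotopy controlled by $P-Q$ composed with projections is not enough.
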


\subsection{Uniqueness of algebra norms}

For references on recent developments on the problem of uniqueness of algebra norms, for algebras related to algebra of operators on Banach spaces, we used \cite{W,niels}. A Banach algebra $A$ is said to have unique algebra norm if all algebra norms on $A$ are equivalent (to the the original one).
For most classical Banach spaces $X$ (including Kalton-Peck spaces $Z_p$), the algebra $L(X)$
has unique algebra norm; indeed this holds whenever $X$ admits a "continuous bisection of the identity", a condition implied by being isomorphic to its square (\cite{Johnson}, see \cite{W} Corollary 1.1.12).
When $X \not\simeq X^2$, counterexamples exist (the first one seems to be due to Read \cite{Read}, see \cite{W} Theorem 1.1.19). 

Calkin algebras, or algebras of the form $L(X)/I$ for other ideals $I$, are other natural and classical examples. 
Meyer \cite{Meyer} proved the the Calkin algebra $L(X)/K(X)$, where $K(X)$ denotes the compact operators on $X$, has a unique algebra norm when $X=c_0$ or $\ell_p, 1 \leq p <\infty$. The list is extended to other sequence spaces in Ware's thesis \cite{W}, while an example of (exotic) space $X$ for which $L(X)/K(X)$ does not admit a unique norm is due do Tarbard \cite{T}. Recently Johnson and Philips \cite{JP} prove that this holds for
the Calkin algebra of $X=L_p, 1<p<\infty$ as well (the case $p=1$ seems to remain open). 

On the other hand, in the case $p \neq 2$, in \cite{JP} a closed ideal $I(L_p)$ is constructed  so that
$L(L_p)/I(L_p)$ does not admit a unique algebra norm.
We use Kalton-Peck space to provide another example, which additionally is a (noncommutative) *-algebra:

\begin{theorem}\label{17} The algebra $L(Z_2)/S(Z_2)$ admits two inequivalent *-algebra norms.
\end{theorem}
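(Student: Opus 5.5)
The plan is to compare the usual quotient norm $\|\cdot\|$ on $L(Z_2)/S(Z_2)$ with the algebra seminorm $\|\cdot\|_S$ from the introduction. Both are *-algebra norms for the involution induced by the Kalton--Peck duality, and the goal is to show they are inequivalent.

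\textbf{The involution.} For $1<p<\infty$ we have $Z_2^*\simeq Z_2$ through the bilinear form $B$. The adjoint $S\mapsto S^*$ with respect to $B$ is therefore an involution on $L(Z_2)$: it is conjugate-linear (or linear in the real case), anti-multiplicative, and satisfies $S^{**}=S$. I will check that $S^*$ is strictly singular whenever $S$ is. One way is to use that $Z_2$ is reflexive and that every operator on $Z_2$ is strictly singular iff its adjoint is. I would derive this from the known structure of $Z_2$: every infinite dimensional subspace contains $\ell_2$ or a copy of $Z_2$ complemented in the whole space, so strict singularity is detected on a dual-stable family. Granting this, the involution descends to $L(Z_2)/S(Z_2)$.

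\textbf{The two norms are *-norms.}
\begin{enumerate}[(i)]
\item The quotient norm is a *-norm, up to equivalence, because $\|S^*\|$ is comparable to $\|S\|$ and the ideal $S(Z_2)$ is *-invariant. One may symmetrize it as $\max(\|x\|,\|x^*\|)$.
\item The seminorm $\|T\|_S$ is submultiplicative. Given a subspace, pass to a subspace where $\|B|\|\le \|B\|_S+\varepsilon$, then to a further subspace of the image where $A$ is small. For this I need $\|B\|_S<\infty$ together with a diagonal argument.
\item $\|T\|_S$ vanishes exactly on $S(Z_2)$. This uses the fact, stated in the introduction, that an infinitely singular operator is small on some subspace, combined with the fact that nonstrictly singular operators are isomorphisms on some subspace $W$, hence $\|T\|_S>0$.
\item To make $\|\cdot\|_S$ a *-norm, I again symmetrize: $\max(\|x\|_S,\|x^*\|_S)$. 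This is still dominated by $C\|\cdot\|$.
\end{enumerate}

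\textbf{Inequivalence.} It suffices to show that $(L(Z_2)/S(Z_2),\|\cdot\|_S)$ is not complete; this is claim (a) of the abstract. The other norm is complete and dominates it, so the open mapping theorem then forbids equivalence.

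To prove non-completeness I would construct operators $T_n$ with $\|T_n\|_S\to0$ but $\|T_n\|\ge1$ modulo $S$, with $\sum T_n$ $\|\cdot\|_S$-Cauchy yet not convergent. The natural source is the diagonal operators $D_a(x,y)=(ax,ay)$ on $Z_2$ for sequences $a$. These are bounded only when $a$ has controlled oscillation, with a $\log$-type condition coming from $\Omega_2$ (Kalton--Peck's commutator estimates).

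I would pick blocks on which $a_n$ oscillates between $0$ and $1/n$ over long intervals. Then $D_{a_n}$ restricted to any subspace has a further subspace, for example a block-$\ell_2$ subspace, where the norm is about $1/n$. However, the quotient norm picks up a commutator term of order $\frac1n\log(\text{length})$, which can be made $\ge1$. Summing gives a $\|\cdot\|_S$-Cauchy sequence whose formal limit $D_a$ is unbounded. I then need to show that no bounded operator is its $\|\cdot\|_S$-limit modulo $S$. For this I would use that operators on $Z_2$ are, modulo strictly singular ones, determined on the $\ell_2$ part, so any such limit would have to act as $D_a$ up to a strictly singular perturbation, contradicting unboundedness.

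\textbf{Main obstacle.} This last identification step, together with the precise construction of the $a_n$, is where I expect the main difficulty. I would attack it using the Kalton--Peck description of $L(Z_2)$ via lifting of $\ell_2$-operators commuting approximately with $\Omega_2$.
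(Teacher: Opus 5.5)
\textbf{The inequivalence step cannot work with diagonal operators.} The Kalton--Peck map is an $\ell_\infty$-centralizer. For $a\in\ell_\infty$ and finitely supported $y$ one has
\[
\Omega(ay)(k)-a_k\,\Omega(y)(k)=a_ky_k\log\frac{\|ay\|}{|a_k|\,\|y\|}.
\]
Assume $\|a\|_\infty\le 1$ and put $s=\|ay\|/\|y\|\le1$. Split the coordinates according to whether $|a_k|\le s$ or $|a_k|>s$, using $u\log(s/u)\le s/e$ on the first set and $s\log(1/s)\le 1/e$ on the second. This gives $\|\Omega(ay)-a\Omega(y)\|\le\frac{\sqrt2}{e}\|a\|_\infty\|y\|$, and hence $\|D_a\|\le(1+\sqrt2/e)\|a\|_\infty$ for every bounded $a$.

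So there is no oscillation or logarithmic boundedness condition, and no commutator term of size $\frac1n\log(\text{length})$. Your $D_{a_n}$ have operator norm $O(1/n)$, $\sum_n D_{a_n}$ converges in operator norm to a bounded $D_a$, and the final ``contradicting unboundedness'' step has nothing to contradict. In fact the two norms are equivalent on all diagonal operators. By Corollary~\ref{ssss}, $\|D_a\|_S=\|a\|_{\ell_\infty\text{-}\limsup}:=\limsup_k|a_k|$. On the other hand, $D_a-D_{a1_{[N,\infty)}}$ has finite rank, so the quotient norm of $D_a$ is at most $C\sup_{k\ge N}|a_k|$ for every $N$. Thus no choice of the $a_n$ can separate the norms.

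\textbf{What is missing.} You need operators on which $\Omega$ behaves very differently on domain and range. The paper (Proposition~\ref{main}, Corollary~\ref{inequiv}) takes $t_n$ to be a norm-one block map on $\ell_2$. It sends Rademacher-like vectors $f_i$ onto unit vectors $e_i$. On the $n$-blocks of the $f_i$, $\Omega$ is uniformly trivial ($\nabla\Omega\lesssim\sqrt n$ by Khintchine). On the $n$-blocks of the $e_i$, $\Omega$ is far from trivial ($\nabla\Omega\approx\frac12\sqrt n\log n$).

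Symmetry of $\Omega$ makes $t_n$ extendable (Lemma~\ref{extension}). The compatibility estimate, combined with Lemma~\ref{trivialNew}, shows that every extension of every compact perturbation of $t_n$ has norm of order $\log(\text{block size})$, which can be made at least $n$. Strictly singular perturbations of an extension $T_n$ restrict to compact perturbations of $t_n$. Hence $\|T_n\|_{L(Z_2)/S(Z_2)}\ge n$, while $\|T_n\|_S=\|t_n\|_{\mathrm{Calkin}}\le1$.

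\textbf{Two secondary issues.}
\begin{enumerate}
\item Your justification that $S(Z_2)$ is $*$-invariant is empty: every infinite dimensional subspace of $Z_2$ already contains $\ell_2$, so the dichotomy you invoke carries no information. Quote Kalton--Swanson's Lemma 9 instead.
\item Symmetrizing to $\max(\|x\|_S,\|x^*\|_S)$ does give a $*$-algebra norm. However, non-completeness of $\|\cdot\|_S$ alone then no longer yields inequivalence with the (symmetrized) quotient norm. You would need witnesses with $\|T_n^*\|_S$ also small, or the identity $\|a^*\|_S=\|a\|_S$. The paper obtains the latter from the isometric $*$-embedding $\hat\Lambda$ (Corollary~\ref{KS}).
\end{enumerate}
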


This result is strongly related to an investigation of the non trivial symplectic structure of $Z_2$ (in both the real and complex case) and its relation to a certain representation $\Lambda$, which identifies the quotient space $L(Z_2)/S(Z_2)$ as a *-subalgebra of $L(H)$ defined in \cite{kaltonasymplectic}.
In particular 
we answer a question of Kalton-Swanson \cite{kaltonasymplectic} by proving that this *-subalgebra is not complete. 

\begin{theorem}\label{18} The *-algebra $\Lambda(L(Z_2))$ is not closed in $L(H)$
\end{theorem}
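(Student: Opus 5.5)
We need to show that $\Lambda(L(Z_2))$ is not closed in $L(H)$. The strategy is to exhibit a Cauchy sequence in $\Lambda(L(Z_2))$ whose limit in $L(H)$ cannot be of the form $\Lambda(T)$.

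First we recall the structure of the Kalton--Swanson representation $\Lambda$. Every operator $T\in L(Z_2)$ is, modulo strictly singular operators, determined by a pair of operators on $\ell_2$ acting on the subspace and on the quotient; for $T$ commuting with the twisting these coincide. Concretely, $T$ induces $\tau\in L(\ell_2)$ on the canonical copy of $\ell_2$ and $\sigma\in L(\ell_2)$ on the quotient, and $\Lambda(T)$ is essentially their image in a Calkin-type (or ultrapower) representation on a Hilbert space $H$. The representation is isometric for $\|\cdot\|_S$, or at least equivalent to it. Hence the statement amounts to showing that $(L(Z_2)/S(Z_2),\|\cdot\|_S)$ is incomplete; the inequivalence with the quotient norm, which is complete, then yields Theorem~\ref{17}.

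The key step is to characterise which operators $A\in L(\ell_2)$ lift to $Z_2$, meaning that some $T\in L(Z_2)$ has the diagonal form $T=(A,A)$ modulo $S(Z_2)$. The lifting condition is that the commutator $[A,\Omega_2]=A\Omega_2-\Omega_2A$ be bounded plus linear. The natural candidates are diagonal operators $A=\mathrm{diag}(a_n)$. For these, $A\Omega_2-\Omega_2A$ acts on $x$ by
\[
x(n)\mapsto a_nx(n)\log\frac{\|x\|}{|x(n)|}-a_nx(n)\log\frac{\|Ax\|}{|a_nx(n)|},
\]
which is bounded exactly when $a_n\log|a_n|$ is controlled. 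More generally, whether a diagonal operator lifts depends on the oscillation of $\log|a_n|$ compared with the logarithmic distortion of $\Omega_2$.

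I would then choose $a_n$ taking values on blocks with a slowly varying modulus (or a phase, such as a rotation $e^{i\theta_n}$ in the complex case) so that each truncation $A_k$, being eventually constant, lifts. The sequence $A_k$ is Cauchy in operator norm, but the limit $A$ produces an unbounded commutator with $\Omega_2$, and this defect persists on every infinite-dimensional subspace. Using singularity of $\Omega_2$ (Kalton--Peck), any $T$ with $\Lambda(T)=\Lambda(A)$ would give a bounded linear correction of $[A,\Omega_2]$ on some block subspace, which is a contradiction. Finally one checks $\|\Lambda(T_k)-\Lambda(A)\|\to0$ in $L(H)$.

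The main obstacle is this last contradiction, namely proving that no $T\in L(Z_2)$, even up to strictly singular perturbation and with off-diagonal parts allowed, represents the limit. I would first reduce to block-diagonal $T$ via a gliding hump and the result from \cite{kaltonasymplectic} that the representation is determined by the action on $\ell_2$. I would then quantify the unbounded logarithmic term on normalized block averages $n^{-1/2}\sum e_i$, where $\Omega_2$ contributes $\log n$ times the vector.
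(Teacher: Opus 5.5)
\textbf{Where the proposal breaks.} Your reduction matches the paper's: $\Lambda(L(Z_2))$ is closed iff $(L(Z_2)/S(Z_2),\|\cdot\|_S)$ is complete. Note, though, that the isometry of $\hat\Lambda$ for $\|\cdot\|_S$ is not in Kalton--Swanson, who give only $\|\Lambda\|=1$ and $\ker\Lambda=S(Z_2)$. The lower bound is exactly what the reduction needs, and the paper proves it via $\|T\|_S=\|T_{|\ell_2}\|_{\mathrm{Calkin}}$ and the ultrapower map $\beta$.

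The genuine gap is your choice of candidates: bounded diagonal operators never fail to extend. Your own computation gives, for $A=\mathrm{diag}(a_n)$ and finitely supported $x$,
\[
\Omega_2(Ax)-A\Omega_2(x)=\log\frac{\|Ax\|}{\|x\|}\,Ax-\bigl(a_n\log|a_n|\,x(n)\bigr)_n .
\]
Since $s|\log s|\le 1/e$ on $(0,1]$ and $|t\log t|\le 1/e$ on $[0,1]$, homogeneity gives norm at most $\frac{2}{e}\|A\|\,\|x\|$. In other words, the Kalton--Peck map is an $\ell_\infty$-centralizer. Hence $(y,x)\mapsto(Ay,Ax)$ extends $A$ with norm at most $(1+\frac{2}{e})\|A\|$, regardless of how $\log|a_n|$ oscillates or what the phases are. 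The diagonal operators therefore form a norm-closed, uniformly extendable subspace of $ext(\ell_2,Z_2)$. The norm limit of your truncations $A_k$ is again diagonal and extends, so the claimed unbounded commutator and the final contradiction cannot occur.

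\textbf{What is missing.} You need an operator that changes the local shape of vectors, which diagonal operators cannot do.
\begin{itemize}
\item Fix blocks $F_k$ of size $n$. On the unit vectors $e_i$, $\Omega_2$ is about $\log n$ away from trivial on each block: $\nabla_{[e_i,i\in F_k]}\Omega_2=\frac12\sqrt n\log n$.
\item On flat Rademacher-type vectors $f_i$, $\Omega_2$ is uniformly trivial blockwise: $\nabla\le K_2\sqrt n$ by Khintchine.
\item Let $t_n f_i=e_i$. A lifting $\widetilde U$ makes $\Omega_2U-u\Omega_2$ $\|\widetilde U\|$-trivial. Combined with the type-2/Maurey characterization of $C$-triviality through the averages $\nabla$ (Lemma~\ref{trivialNew}), this forces every lifting of every compact perturbation of $t_n$ to have norm $\gtrsim\log n$; by duality the same holds for extensions.
\item Compact perturbations are harmless because $\|K_{|[f_i,i\in F_k]}\|\to0$. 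Arbitrary quotient maps and off-diagonal parts need no separate treatment, since only the triviality of $\Omega_2U-u\Omega_2$ is used. Your restriction to extensions of the form $(A,A)$ would not have covered these.
\item Symmetry of $\Omega_2$ makes each $t_n$ itself extendable (Lemma~\ref{extension}).
\item Then $T=\sum_n n^{-1/2}S_nt_nS_n^{-1}P_n$ lies in the norm closure of $ext(\ell_2,Z_2)$ but not in $ext_K(\ell_2,Z_2)$, which gives the result by Proposition~\ref{equivalence}.
\end{itemize}
Your closing remark about $n^{-1/2}\sum e_i$ producing a $\log n$ term points at the right phenomenon. However, it only obstructs operators that map spans of flat blocks isomorphically onto spans of unit vectors, and no diagonal operator does that.
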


Along the same lines, we answer another question of Kalton-Swanson  \cite{kaltonasymplectic} which also appears as an open question in Castillo-González-Pino \cite{raul}:

\begin{theorem}\label{19} The *-algebra $L(Z_2)/S(Z_2)$, equipped with the quotient norm, is not *-isomorphic to a $C^*$-algebra.
\end{theorem}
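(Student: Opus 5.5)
We need to prove that $L(Z_2)/S(Z_2)$ with the quotient norm is not $*$-isomorphic to a $C^*$-algebra. The plan is to argue by contradiction and reduce to Theorem \ref{18}. Recall from \cite{kaltonasymplectic} the representation $\Lambda$. It sends $L(Z_2)$ into $L(H)$, its kernel is $S(Z_2)$, and the induced map $\tilde\Lambda: L(Z_2)/S(Z_2)\to L(H)$ is an injective $*$-homomorphism. Here the involution on $L(Z_2)$ is the one induced by the symplectic duality $B$, using $Z_2^*\simeq Z_2$.

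The key ingredient is a standard fact about $C^*$-algebras: every injective $*$-homomorphism from a $C^*$-algebra into another $C^*$-algebra is isometric, and in particular has closed range. We use it as follows.
\begin{enumerate}
\item Suppose $\Phi: L(Z_2)/S(Z_2)\to \mathcal{B}$ is a $*$-isomorphism onto a $C^*$-algebra $\mathcal{B}$. We take this to mean a bounded bijective $*$-homomorphism. Since $L(Z_2)/S(Z_2)$ is complete for the quotient norm, $\Phi$ is a topological isomorphism by the open mapping theorem.
\item The composition $\tilde\Lambda\circ\Phi^{-1}:\mathcal{B}\to L(H)$ is an injective $*$-homomorphism between $C^*$-algebras. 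By the fact above it is isometric.
\item Hence its range $\Lambda(L(Z_2))$ is complete, and therefore closed in $L(H)$. This contradicts Theorem \ref{18}.
\end{enumerate}

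If "$*$-isomorphic" is meant only algebraically, with no continuity assumed, we first need automatic continuity. A $C^*$-algebra has a unique complete norm under which it is a Banach algebra (Johnson's uniqueness of norm for semisimple Banach algebras). This does not apply to $L(Z_2)/S(Z_2)$ directly, since it need not be semisimple. Instead we proceed as follows. Consider $\Phi^{-1}$ composed with the quotient norm. It gives a Banach $*$-algebra norm on $\mathcal{B}$. A $C^*$-norm is minimal among Banach $*$-algebra norms: for self-adjoint $b$, $\|b\|_{C^*}$ equals the spectral radius, and the spectral radius is dominated by any Banach algebra norm. Together with $\|b\|^2=\|b^*b\|$, this gives $\|b\|_{C^*}\le C\|\Phi^{-1}b\|$, provided the involution is continuous for the quotient norm. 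We must check this, but it holds because $T\mapsto T^*$ is bounded on $L(Z_2)$ via the duality. This makes $\Phi$ continuous, and the argument in the list above then applies.

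The main obstacle is Theorem \ref{18} itself, that is, showing that $\Lambda(L(Z_2))$ is not closed. Since we may assume it, what remains is bookkeeping. Two points need care. First, that $\tilde\Lambda$ is injective, i.e. $\ker\Lambda=S(Z_2)$ exactly, as established in \cite{kaltonasymplectic}. Second, that $\Lambda$ respects the involutions. Alternatively, the conclusion can be derived from Theorem \ref{17}, since a $C^*$-algebra has a unique $*$-algebra norm up to equivalence among complete ones. The route through Theorem \ref{18} is the cleaner one.
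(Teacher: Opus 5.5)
Your argument is correct, but it takes a different route from the paper.

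The paper does not pass through Theorem~\ref{18}. It chooses $t_n$ in Corollary~\ref{inequiv} to be an \emph{isometric} complex structure on $\ell_2$, with an extension $T_n\in L(Z_2)$ such that $\|a_n\|\geq n$ for $a_n=\widetilde{T_n}$. Then $\beta(T_n)$ is unitary on $W$ and $\widetilde{\beta}$ is an injective $*$-homomorphism, so $a_n^*a_n=1$. Any bounded $*$-isomorphism $\Phi$ onto a $B^*$-algebra would give $\|\Phi(a_n)\|^2=\|\Phi(a_n^*a_n)\|=1$, hence $n\leq\|a_n\|\leq\|\Phi^{-1}\|$ for every $n$.

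That argument uses nothing but the $C^*$-identity. It is quantitative, exhibiting explicit elements with $a^*a=1$ and large quotient norm. It also covers $B^*$-algebras, which is a genuinely stronger conclusion in the real case, since real $B^*$-algebras need not be $C^*$-algebras.

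Your route needs only the non-closedness of $\Lambda(L(Z_2))$ and the fact that $\widetilde{\Lambda}$ is an injective $*$-homomorphism. The price is the rigidity theorem that injective $*$-homomorphisms between $C^*$-algebras are isometric. In exchange you show that Theorems~\ref{18} and~\ref{19} are equivalent, since the paper already notes that completeness of $\|\cdot\|_S$ would make the algebra $*$-isomorphic to a $C^*$-algebra. You also treat automatic continuity explicitly, which the paper leaves implicit.

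Two side remarks in your proposal are inaccurate, though neither affects the main argument.
\begin{itemize}
\item Under your contradiction hypothesis the algebra is algebraically isomorphic to a $C^*$-algebra, hence semisimple. So the reason you give for not applying Johnson's theorem does not hold; your spectral-radius argument works anyway.
\item The suggested alternative via Theorem~\ref{17} does not work as phrased. The norm $\|\cdot\|_S$ is incomplete, so uniqueness among \emph{complete} norms says nothing about it. You would need that $\|\cdot\|_S$ satisfies the $C^*$-identity, being pulled back from $L(H)$ by $\widetilde{\Lambda}$, and that $C^*$-norms on a $C^*$-algebra are unique even without completeness. That is exactly the fact your main argument already uses.
\end{itemize}
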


\subsection{Organization of the paper}

After this introduction, we give definitions for the two natural quantitative notions of singularity for operators and investigate the relations between them, Subsection 2.1. We also consider a quantitative notion of inessential operators, which is sometimes more precise, Subsection 2.2.
In the third section we present new results about complex structures. A first series of results gives conditions for a perturbation of a complex structure to be equivalent to the original one, Subsection 3.1,
and the second series conditions for an operator whose square is a perturbation of $-Id$ to be itself a perturbation of a complex structure. Finally we investigate relations between complex structures on a space and its hyperplanes, concluding with Theorem \ref{hypandspace}  (Theorem \ref{16} of the introduction).

In the fourth section, we pass to the context of twisted sums, defining and studying $\varepsilon$-singular twisted sums, obtaining restrictions on the existence of complex structures on such spaces (Proposition \ref{extconstante}) extending results from \cite{CCFM}.

In the fifth section, we investigate some symplectic structures on Banach spaces, including the natural symplectic structure on twisted Hilbert spaces induced by interpolation, the main example being Kalton-Peck space $Z_2$. Finally in the sixth section, we improve the description of the *-algebra homomorphism $\Lambda$ from $L(Z_2)$ into $L(H)$ defined by Kalton-Swanson, proving that it is isometric with respect to the $\|.\|_S$ seminorm on $L(Z_2)$ (Corollary 6.15), defining a similar map $\beta$ wich is easier to study. In the rest of the Section, we investigate extension properties of maps on $\ell_2$ to $Z_2$, the main technical result being the existence of a norm one operator $t_n$ on $\ell_2$ (which may be chosen to be a complex structure) which is extensible, but such that any extension to $Z_2$ of any compact perturbation of $t_n$ must have norm at least $n$ (Corollary \ref{inequiv}). Our main results then follow: a) $\|.\|_S$ is not complete on $L(Z_2)/S(Z_2)$ (Corollary \ref{nor}, or equivalently Theorem \ref{18}); b) the norm $\|.\|_S$ and the usual quotient norm are inequivalent on $L(Z_2)/S(Z_2)$ (Theorem \ref{630} or  Theorem \ref{17}); and c) the quotient norm
is not a $B^*$-algebra norm (Theorem \ref{631}, or Theorem \ref{19}).

\section{Quantitative notions of singularity for operators}
\subsection{Notions of singularity}
The following notions are well-known, although the terminology may vary. For references we indicate \cite{LT,Maurey}.
\begin{definition}
Let $S$ be an operator between Banach spaces $X$ and $Y$, and $\varepsilon>0$. We say that
\begin{itemize}
 \item[(a)]
$S$ is $\varepsilon$-singular if every infinite dimensional subspace of $X$ contains a normalized vector $x$ such that
$\|Sx\| \leq \varepsilon$.
\item[(b)]
$S$ is $\varepsilon$-strongly singular if for every infinite dimensional subspace of $X$, there is a further infinite dimensional subspace $Z$ such that $\|S_{|Z}\| \leq \varepsilon$. 

\item[(c)] $S$ is $\varepsilon$-infinitely singular if every finite codimensional subspace of $X$ contains a normalized vector $x$ such that
$\|Sx\| \leq \varepsilon$.
\end{itemize}
\end{definition}

We list a few obvious properties. For fixed $\varepsilon>0$, 
$\varepsilon$-strongly singular implies $\varepsilon$-singular, but there does not seem to be a kind of converse.
An operator is strictly singular when it is  $\varepsilon$-singular for all $\varepsilon>0$, and equivalently if it is $\varepsilon$-strongly singular for all $\varepsilon>0$.

By definition, an operator $S$ is infinitely singular if and only if it is $\varepsilon$-infinitely singular for all $\varepsilon > 0$, that is, if and only if for every $\varepsilon > 0$ there exists an infinite-dimensional subspace $Z$ of $X$ such that $\|S_{|Z}\| \leq \varepsilon$ \cite[Proposition 3.2, p.~15]{Maurey}.

\begin{proposition} \label{seminorm}
Let $X$ and $Y$ be Banach spaces with $\dim X=\infty$, and let $T$ be an operator in $L(X,Y)$. We use the following notation for the quotient norms by $S(X,Y)$ and $K(X,Y)$

$$\|T\|_{L(X,Y)/S(X,Y)}= \inf \{ \|T-S\| : S {\rm \ is \  strictly \ singular}\}$$

$$\|T\|_{\mathrm{Calkin}}=\inf \{ \|T-K\| : K {\rm \ is \ compact}\},$$
and define

$$\rho_s(T)=\inf\{\varepsilon: T {\rm\ is\ } \varepsilon-{\rm\ singular}\},$$

$$\|T\|_S=\inf\{\varepsilon: T {\rm\ is\ } \varepsilon-{\rm\ strongly \, \, singular}\}.$$
and
$$c(T)=\inf\{\varepsilon: T {\rm\ is\ } \varepsilon-{\rm\ infinitely\ singular}\}.$$
Then we have the following relations:
\begin{enumerate}[(a)]
\item $c(T) \leq\rho_s(T) \leq \|T\|_S \leq \|T\|_{L(X,Y)/S(X,Y)} \leq \|T\|_{\mathrm{Calkin}} \leq \|T\|$

\item $S$ is strictly singular $\Leftrightarrow
\rho_s(T + S)=\rho_s(T)$ for every $T\in L(X,Y)$ $\Leftrightarrow \|T+ S\|_S=\|T\|_S$ for every $T\in L(X,Y)$.

\item $\rho_s(T+U) \leq \rho_s(T)+\|U\|_S$, i.e. $\rho_s$ is $1$-Lipschitz with respect
to $\|.\|_S$. 

\item $\|T\|_S$ is a semi-norm, and this induces a (possibly non complete) norm on $L(X,Y) / S(X,Y)$; if $Y=X$ then this is an algebra norm, i.e.
 $\| TU\|_S \leq \|T\|_S \|U\|_S$
\item $\rho_s(TU) \leq \min(\rho_s(T) \|U\|_S,\|T\| \rho_s(U))$
\item  $\|T^{-1}\|^{-1} \leq \|T^{-1}\|^{-1}_S \leq \rho_S(T)$
\end{enumerate}

\end{proposition}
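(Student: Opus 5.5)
The plan is to prove the six items more or less in the order listed, using only the definitions of the three notions of singularity together with one standard fact. The fact (Kato) is that an operator which is not an isomorphism on any infinite dimensional subspace of $W$ has, for every $\eta>0$, an infinite dimensional subspace $Z\subset W$ with $\|S_{|Z}\|\le\eta$. Equivalently, $S$ strictly singular implies $S$ is $\eta$-strongly singular for all $\eta>0$; I take this as known, as in the remarks preceding the proposition.

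For (a), the inequality $c(T)\le\rho_s(T)$ holds because finite codimensional subspaces are infinite dimensional, since $\dim X=\infty$. The inequality $\rho_s(T)\le\|T\|_S$ holds because $\varepsilon$-strongly singular implies $\varepsilon$-singular. For $\|T\|_S\le\|T-S\|$ with $S$ strictly singular, fix $W$ and use Kato to find $Z\subset W$ with $\|S_{|Z}\|\le\eta$. Then $\|T_{|Z}\|\le\|T-S\|+\eta$; now take the infimum over $S$ and let $\eta\to0$. The last two inequalities follow from $K(X,Y)\subset S(X,Y)$ and from taking $S=0$.

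Items (b), (c) and (e) follow the same "pass to a subspace, then pick a vector" pattern. For (c), given $W$, choose $Z\subset W$ with $\|U_{|Z}\|\le\|U\|_S+\eta$. Then choose a normalized $x\in Z$ with $\|Tx\|\le\rho_s(T)+\eta$, so that $\|(T+U)x\|\le\rho_s(T)+\|U\|_S+2\eta$. Item (b) in the forward direction is then (c) applied with $U=\pm S$, since $\|S\|_S=0$ by (a); the analogous subadditivity argument gives the statement for $\|\cdot\|_S$. For the converse direction of (b), put $T=0$: then $\rho_s(S)=0$ means that $S$ is $\varepsilon$-singular for every $\varepsilon$, which is exactly strict singularity.

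For (e), the bound $\rho_s(TU)\le\|T\|\rho_s(U)$ is immediate. For $\rho_s(TU)\le\rho_s(T)\|U\|_S$, given $W$ I pass to $Z\subset W$ on which $\|U_{|Z}\|\le\|U\|_S+\eta$. I then need a normalized $x\in Z$ with $\|TUx\|$ small, and this requires $UZ$ to be infinite dimensional. So I split into two cases as in (d) below: either $U$ is an isomorphism on some infinite dimensional $Z'\subset Z$, or, by Kato, $U$ is arbitrarily small on a subspace of $Z$ and the bound is trivial.

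In (d), subadditivity and homogeneity of $\|\cdot\|_S$ come from successively passing to subspaces. The kernel of the seminorm is exactly $S(X,Y)$, by (a) and the definition, so $\|\cdot\|_S$ is invariant under strictly singular perturbations by (b) and descends to a norm on the quotient. The main obstacle is submultiplicativity, $\|TU\|_S\le\|T\|_S\|U\|_S$, because $U$ may collapse the subspace. Given $W$, first take $Z\subset W$ with $\|U_{|Z}\|\le a:=\|U\|_S+\eta$. If $U$ is not an isomorphism on any infinite dimensional subspace of $Z$, Kato gives $Z'\subset Z$ with $\|U_{|Z'}\|$ as small as desired, and then $\|TU_{|Z'}\|$ is small as well. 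Otherwise I shrink $Z$ so that $U_{|Z}$ is an isomorphism. Then $UZ$ is an infinite dimensional subspace, and it contains $V$ with $\|T_{|V}\|\le b:=\|T\|_S+\eta$. Setting $Z''=Z\cap U^{-1}(V)$, which is infinite dimensional, gives $\|TU_{|Z''}\|\le ab$.

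Finally, (f) is read for invertible $T$. The first inequality follows from $\|T^{-1}\|_S\le\|T^{-1}\|$ by (a). For the second, take $\delta>\|T^{-1}\|_S$ and suppose $T$ is $\varepsilon$-singular. Given $W$, the space $TW$ is infinite dimensional, so it contains an infinite dimensional $Z'$ with $\|T^{-1}_{|Z'}\|\le\delta$. The infinite dimensional space $T^{-1}Z'\subset W$ contains a normalized $x$ with $\|Tx\|\le\varepsilon$, so $1=\|T^{-1}Tx\|\le\delta\varepsilon$. Letting $\delta\to\|T^{-1}\|_S$ and $\varepsilon\to\rho_s(T)$ gives $\|T^{-1}\|_S^{-1}\le\rho_s(T)$.
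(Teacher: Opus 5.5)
Your proof is correct and has the same skeleton as the paper's. In both, (a) is read off from the definitions, using the fact recalled before the proposition that strictly singular operators are $\eta$-strongly singular for every $\eta>0$. In both, (b) is obtained from (c) together with subadditivity of $\|\cdot\|_S$.

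The difference lies in (d), (e) and (f). The paper does not argue (d) itself: it rewrites $\|T\|_S=\sup_{Z}\inf_{W\subseteq Z}\|T_{|W}\|$ and cites \cite{F} for both the seminorm property and submultiplicativity. It calls (c) easy and leaves (e) and (f) to the reader.

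You supply direct arguments instead. The key one is the Kato dichotomy for submultiplicativity. On a subspace $Z$ where $U$ is small, either $U$ is an isomorphism on a further subspace, or $U$ is arbitrarily small on a further subspace. In the first case, a subspace $V\subseteq UZ$ on which $T$ is small pulls back to $Z\cap U^{-1}(V)$. The same dichotomy gives the bound $\rho_s(TU)\le\rho_s(T)\|U\|_S$ in (e), and (f) is a clean pull-back argument through $T^{-1}$.

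Your version is self-contained and shows exactly where Kato's lemma is needed, namely to handle the possible collapse of subspaces under $U$. The paper's version is shorter but rests on the external reference.

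One small omission: in (b) you only wrote the converse for the $\rho_s$-condition. The $\|\cdot\|_S$-condition is handled identically by taking $T=0$, since $\rho_s\le\|\cdot\|_S$.
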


\begin{proof} (a) is an immediate consequence of the definitions.

Note that $$\||T\||_S=\sup_{Z \subseteq X}\inf_{W \subseteq Z}\|T_{|W}\|,$$
where $Z$
and $W$ are assumed infinite dimensional. In \cite{F} it was proved that the above defines a seminorm on $L(X,Y)$ and, when $X=Y$, that it is submultiplicative. This proves (d) and the second part of (b). The first part of (b) follows from (c), which is easy to check. Assertions (e) and (f) are left to the reader. 
\end{proof}

In general $\|T\|_{\mathrm{Calkin}}$, $\rho_s(T)$ and $\|T\|_S$ could differ \cite{manolo}. 
On the other hand, if $T$ is an operator on $\ell_p$, $1\leq p<\infty$, it is well-known that it is compact if and only if it is strictly singular, meaning that $\|T\|_{\mathrm{Calkin}}=0 \Leftrightarrow \rho_s(T)=0$. This extends to the following numerical estimate:

\begin{proposition} \label{equals} Let $1 \leq p <\infty$. For any $T \in L(\ell_p)$, we have the equality $\|T\|_{\mathrm{Calkin}}=\rho_s(T)=\|T\|_S$.
\end{proposition}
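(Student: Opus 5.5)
By Proposition \ref{seminorm}(a) we already have $\rho_s(T) \le \|T\|_S \le \|T\|_{\mathrm{Calkin}}$. So the plan is to close the cycle by proving the single inequality
\[
\|T\|_{\mathrm{Calkin}} \le \rho_s(T), \qquad T \in L(\ell_p).
\]
The strategy is a gliding hump argument. Assume $\rho_s(T) < \varepsilon$. We want to show that for every $n$ the restriction $T_{|[e_k]_{k>n}}$ to the tail has norm at most roughly $\varepsilon$, up to an error tending to $0$ as $n$ grows.

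Granting this, the conclusion is immediate. Let $P_n$ be the basis projection onto $[e_1,\dots,e_n]$. Then $TP_n$ is finite rank, so
\[
\|T\|_{\mathrm{Calkin}} \le \|T - TP_n\| = \|T(I-P_n)\| = \|T_{|[e_k]_{k>n}}\|,
\]
because $\|I-P_n\| = 1$ in $\ell_p$.

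It therefore suffices to prove
\[
\lim_n \|T_{|[e_k]_{k>n}}\| \le \rho_s(T).
\]
The limit exists because the sequence is nonincreasing. Call it $\alpha$ and suppose $\alpha > \varepsilon > \rho_s(T)$. The contradiction comes from building a block subspace on which $T$ is bounded below by nearly $\alpha$.

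\begin{itemize}
\item \textbf{Block sequence.} Inductively choose normalized finitely supported blocks $x_1 < x_2 < \cdots$ with $\|Tx_j\| > \alpha - \delta$.
\item \textbf{Almost disjoint images.} Using that $T e_k \to 0$ weakly (for $p>1$) and perturbing, arrange that the $Tx_j$ are small perturbations of disjointly supported blocks $y_j$. Choose each $x_{j+1}$ far enough out that $Tx_{j+1}$ is almost supported after $\operatorname{supp} y_j$, and truncate $Tx_j$ to get $y_j$ with $\sum_j \|Tx_j - y_j\| < \delta$.
\item \textbf{Lower estimate.} For $x = \sum a_j x_j$ we get
\[
\|Tx\| \ge \Bigl(\sum |a_j|^p \|y_j\|^p\Bigr)^{1/p} - \delta\|x\| \ge (\alpha - 3\delta)\|x\|,
\]
since $\|x\| = \bigl(\sum |a_j|^p\bigr)^{1/p}$ for disjoint normalized blocks in $\ell_p$.
\end{itemize}

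This contradicts $\varepsilon$-singularity on $[x_j]$ once $\delta$ is small enough, so $\alpha \le \rho_s(T)$.

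The main obstacle is the case $p=1$, where $Te_k$ need not tend to $0$ weakly, and more generally arranging that the images $Tx_j$ almost disjointify. The standard fix is to pass to differences. Apply Rosenthal's $\ell_1$ / Bessaga–Pełczyński-type selection to the bounded sequence $(Tx_j)$ to extract a subsequence whose pointwise limit exists coordinatewise; Cantor diagonalization suffices, since $\ell_p$ has a basis. Then replace the blocks by normalized differences $x_{2j}-x_{2j+1}$.

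For these differences the images tend to $0$ coordinatewise, so a small perturbation makes them disjoint blocks. One must still keep $\|T(x_{2j}-x_{2j+1})\|$ large relative to $\|x_{2j}-x_{2j+1}\| = 2^{1/p}$. To do this, choose each block $x_j$ with $\|Tx_j\| > \alpha - \delta$ and with the image supported (after perturbation) mostly beyond all previous supports. This is possible because $\|T_{|[e_k]_{k>m}}\| \ge \alpha$ for all $m$, combined with finite-dimensional compactness to push mass out of the first $N$ coordinates.

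This bookkeeping is the step I would check most carefully. I would first try to do it directly, without differences: compactness of the initial coordinates lets us subtract a convergent part, and we then use coordinatewise convergence.
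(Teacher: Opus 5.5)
For $1<p<\infty$ your argument is the paper's own, and it is fine: bound $\|T\|_{\mathrm{Calkin}}$ by the tail norm $\|T_{|[e_k]_{k>n}}\|$, pick successive blocks with large images, use weak nullness to disjointify the images, and read off the lower $\ell_p$-estimate.

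At $p=1$ there is a genuine gap, and it is not a bookkeeping issue. The inequality you reduce to, $\lim_n\|T_{|[e_k]_{k>n}}\|\le\rho_s(T)$, is false in $\ell_1$. Take $Tx=(\sum_k x_k)e_1$. It has rank one, so $\rho_s(T)=0$, but $Te_k=e_1$ for all $k$, so $\|T_{|[e_k]_{k>n}}\|=1$ for every $n$. Your repair breaks on this example. Every $Tx_j$ is a multiple of $e_1$, so no mass can be ``pushed out of the first $N$ coordinates''. Moreover $T(x_{2j}-x_{2j+1})=(s_{2j}-s_{2j+1})e_1$ with $s_j=\sum_k x_j(k)$, which tends to $0$ along a subsequence. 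In general, differences cancel the coordinatewise limit $y$ of the $Tx_j$, and what survives is $\|Tx_j-y\|$. That quantity is controlled by $\|T\|_{\mathrm{Calkin}}$, not by the domain-side tail norm, which for $p=1$ equals $\limsup_k\|Te_k\|$ and can be strictly larger. The paper's proof is written the same way, and its step ``up to a small perturbation, $Tw_n$ are successive'' is likewise only justified by weak nullness, i.e.\ for $p>1$. So you have to supply the $p=1$ case yourself.

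The missing idea is to truncate on the range side. For every $1\le p<\infty$ one has $\|T\|_{\mathrm{Calkin}}=\lim_n\|(I-P_n)T\|$. The inequality $\le$ holds since $P_nT$ has finite rank; $\ge$ holds since $\|I-P_n\|=1$ and $\|(I-P_n)K\|\to 0$ for compact $K$.

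Now let $p=1$ and $\varepsilon<\|T\|_{\mathrm{Calkin}}$. Since $\|S\|=\sup_k\|Se_k\|$ for $S\in L(\ell_1)$, and $\|(I-P_n)Te_k\|\to0$ for each fixed $k$, you can choose $k_j\uparrow\infty$ and $n_j\uparrow\infty$ with $\|(I-P_{n_j})Te_{k_j}\|>\varepsilon$. Pass to a subsequence with $Te_{k_j}\to y$ coordinatewise. Then $u_j=Te_{k_j}-y$ is coordinatewise null, and $\liminf_j\|u_j\|\ge\varepsilon$ because $\|u_j\|\ge\|(I-P_{n_j})Te_{k_j}\|-\|(I-P_{n_j})y\|$. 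Hence, after a further subsequence, $(u_j)$ is a small perturbation of successive blocks $v_j$.

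Now take differences, but of these basis vectors: $z_j=\frac12(e_{k_{2j}}-e_{k_{2j+1}})$. These are disjoint and normalized, and $Tz_j=\frac12(u_{2j}-u_{2j+1})$ because $y$ cancels. The $Tz_j$ are almost successive blocks of norm about $\frac12(\|v_{2j}\|+\|v_{2j+1}\|)\gtrsim\varepsilon$. Hence $\|Tw\|\ge(\varepsilon-O(\delta))\|w\|$ on $[z_j]$, which is isometric to $\ell_1$, and this gives $\rho_s(T)\ge\|T\|_{\mathrm{Calkin}}$ at $p=1$.
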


\begin{proof} Consider $P_N : \ell_p \to \ell_p$ the $N$-th projection associated to the canonical basis of $\ell_p$ and assume $\varepsilon < \|T\|_{\mathrm{Calkin}}\leq \| T- TP_N\| =  \|T_{|[e_k,k> N]}\|$. Therefore, we can find a normalized successive sequence $(w_n)$ such that $\|T(w_n)\| > \varepsilon$ for all $n$. Up to a small perturbation, we may assume that $Tw_n=v_n$ are successive.
Then for any normalized vector $w=\sum_i \lambda_i w_i$ in $W= [w_n]$, (i.e. $\lambda_i$'s are an $\ell_p$-norm one sequence), $\|Tw\|=\|\sum_i \lambda_i v_i\|\geq \epsilon$. So we have that $\rho_s(T)\geq \varepsilon$. \end{proof}

We shall see later on that this result extends to certain self-extensions of $\ell_p$. That  $\|\cdot\|_{\mathrm{Calkin}}$  and $\|\cdot\|_S$ coincide on $L(\ell_p)/K(\ell_p)$
also follows from Proposition \ref{seminorm}. Indeed, Meyer \cite{Meyer} proved that the Calkin algebra $L(\ell_p)/K(\ell_p)$ has a unique algebra norm for $1 \leq p < \infty$.

\subsection{Fredholm and inessential operators}
Recall that the classical Atkinson characterization of Fredholm operators states that an operator $T$ is Fredholm if and only if there exist operators $A$ and $B$ such that $AT - \Id$ and $TB - \Id$ are compact. In fact, we can choose such operators with $\|A\| = \|B\| = \|T_0^{-1}\|$, where $T_0$ is an isomorphism onto its image, defined on a closed subspace of $X$ with finite codimension. This means that Fredholm operators are precisely those operators whose image by the quotient map in the Calkin algebra is invertible.

\

We introduce the following new definitions.
\begin{definition}
Let $S$ be an operator between Banach spaces $X$ and $Y$, and $C>0$. We say that
\begin{itemize}
 \item[(a)] $S$ is $C$-inessential if  $\Id_Y-ST$ (equivalently, $\Id_X - TS$) is  Fredholm whenever $T \in L(Y,X)$ satisfies $\|T\| < C$.

 \item[(b)] $S$ is $C$-strongly inessential if $\Id_Y-ST$ (equivalently, $\Id_X - TS$) is  Fredholm whenever $T \in L(Y,X)$ satisfies $\|T\|_S < C$
    \end{itemize}
\end{definition}

Therefore $C$-strongly inessential implies $C$-inessential. Note also that the identity operator is always is $1$-inessential, but not $1+\varepsilon$-inessential, $\varepsilon>0$.
One may also observe that the sum of a $C$-inessential (resp. $C$-strongly inessential) operator with an inessential one is again $C$-inessential (resp. $C$-strongly inessential).

\begin{obs} An operator 
 $S$ is inessential if and only if it
 is $C$-inessential for all $C>0$, or equivalently,  $C$-strongly inessential for all $C>0$.
\end{obs}

\begin{lemma} \label{complexification}
Let $S \in L(X)$ be $C$-inessential. Then its complexification $\widehat{S} \in L(X_\C)$ is $C/\sqrt{2}$-inessential.
\end{lemma}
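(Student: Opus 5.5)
The plan is to reduce the complex statement to real operators on $X\oplus X$, using the block structure of $\C$-linear operators on $X_\C$. Every $\C$-linear $T\in L(X_\C)$ commutes with $J(x,y)=(-y,x)$, so it has the form
$$T(x,y)=(Ax-By,\;Bx+Ay)$$
for some $A,B\in L(X)$, where $A$ is the first coordinate of $T(x,0)$ and $B$ the second. The complexification $\widehat S=S\oplus S$ is also $\C$-linear. We need to show that $\Id-T\widehat S$ is Fredholm on $X_\C$ whenever $\|T\|<C/\sqrt2$, with $\|T\|$ computed for the norm $\||\cdot\||$. For a $\C$-linear operator, being Fredholm over $\C$ is the same as being Fredholm as an $\R$-linear operator on $X^2$, since kernel and cokernel dimensions just double. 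So it suffices to prove that the real block operator
$$\Id-T\widehat S=\begin{pmatrix}\Id-AS & BS\\ -BS & \Id-AS\end{pmatrix}$$
is Fredholm.

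First I will compare norms. Since $\||(x,0)\||$ and $\||(0,y)\||$ lie between $\|x\|$ (resp. $\|y\|$) and a fixed multiple of it, and the factor coming from $\sup_\theta$ of $|\cos\theta|+|\sin\theta|$ is $\sqrt2$, I expect the bound $\|\cos\theta\,A+\sin\theta\,B\|\le \sqrt2\,\|T\|<C$ for every $\theta$. This is where the constant $C/\sqrt2$ comes from.

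The second step is to diagonalize the rotation symmetry. I would rather avoid a Schur complement argument, since parametrices can have large norm. Instead I use the real one-parameter family $A_\theta=\cos\theta A+\sin\theta B$ together with the identity $e^{-i\theta}T=A_\theta+iB_\theta$, where $B_\theta=-\sin\theta A+\cos\theta B$. Composing with the unimodular scalar $e^{i\theta}$ does not change Fredholmness of $\Id-T\widehat S$ modulo an invertible factor. The aim is to combine the Fredholmness of $\Id-A_\theta S$ for all $\theta$, which holds by $C$-inessentiality of $S$ since $\|A_\theta\|<C$, into Fredholmness of the $2\times2$ block operator.

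Concretely, I would use the continuity of the index along the path. For $t\in[0,1]$ the operators $\Id-tT\widehat S$ are all of the form $\Id-T'\widehat S$ with $\|T'\|<C/\sqrt2$, and at $t=0$ the operator is the identity. Hence, once each member of the path is shown to be upper semi-Fredholm, the index stays $0$, and Fredholmness follows. This also settles the cokernel once the kernel side is handled.

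For the upper semi-Fredholm part I will argue by contradiction. If $\Id-T\widehat S$ is infinitely singular, there are normalized $(x_n,y_n)$, spanning an infinite dimensional subspace, with $\|(x_n,y_n)-T\widehat S(x_n,y_n)\|\to0$. Choosing an angle $\theta$ adapted to these vectors, the real combination $u_n=\cos\theta\,x_n+\sin\theta\,y_n$ should satisfy $\|u_n-A_{\theta}Su_n\|\to0$ along an infinite dimensional subspace. This contradicts the upper semi-Fredholmness of $\Id-A_\theta S$. To pass from "for each vector some angle works" to "one angle works on an infinite dimensional subspace", I would take a compactness argument in $\theta$ and pass to a subsequence.

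I expect this last point to be the main obstacle: extracting a single real operator $A_\theta$ with $\|A_\theta\|<C$ from the complex-linear almost-kernel of $\Id-T\widehat S$. If the angle argument fails, the fallback is to find a real operator $R$ on $X$ with $\|R\|<C$ such that $\Id-T\widehat S$ is Fredholm if and only if $\Id-RS$ is. This is in the spirit of the usual proof that complexification preserves inessentiality, and I would try to build $R$ by writing $X_\C$ as the real space $X^2$ and using the isometric-up-to-$\sqrt2$ identification of $X_\C$ with $X\oplus X$.
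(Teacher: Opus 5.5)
\textbf{Gap in your last step.} The failure is not a compactness issue in $\theta$: the real operators $\Id-A_\theta S$ do not detect non-Fredholmness of $\Id-T\widehat S$ at all. If $z_n=x_n+iy_n$ is an almost-kernel sequence for $\Id-T\widehat S$, so is $e^{-i\theta}z_n=u_n+iv_n$. But the real part of that relation reads $u_n-ASu_n+BSv_n\to0$. The imaginary part $v_n$ enters through $BS$, and no choice of $\theta$ turns this into $u_n-A_\theta Su_n\to 0$.

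Concretely, take $X=\ell_2\oplus\ell_2$ (real), $V(a,b)=(-b,a)$ (so $V^2=-\Id$), $S=\Id$, $A=0$, $B=V$. Then $\Id-A_\theta S=\Id-\sin\theta\,V$ is invertible for every $\theta$, because $(\Id-\sin\theta\,V)(\Id+\sin\theta\,V)=(1+\sin^2\theta)\Id$. Yet your block operator $\begin{pmatrix}\Id & V\\ -V & \Id\end{pmatrix}$ annihilates every $(-Vy,y)$, so $\Id-T\widehat S$ has infinite dimensional kernel. This does not contradict the lemma, since here $\|T\|=1>1/\sqrt2$. However, your argument uses the size of $T$ only to know that each $\Id-A_\theta S$ is Fredholm. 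So the implication it rests on (``$\Id-T\widehat S$ not upper semi-Fredholm $\Rightarrow$ some $\Id-A_\theta S$ not upper semi-Fredholm'') is false.

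Your fallback, a single real $R$ on $X$ with $\|R\|<C$ and $\Id-T\widehat S$ Fredholm iff $\Id-RS$ is, meets the same obstruction and is never constructed. The qualitative proof that complexification preserves inessentiality goes through radicals of Calkin algebras and gives no constants. Your index-continuity reduction to upper semi-Fredholmness is sound. What is missing is a quantitative way to handle the coupled real system. One route would be to show that $S\oplus S$ is $C'$-inessential on $X\oplus X$ for some $C'$ comparable to $C$. This would suffice, since $\Id-T\widehat S=\Id-\mathcal T(S\oplus S)$ with $\mathcal T=\begin{pmatrix}A&-B\\ B&A\end{pmatrix}$. It does not follow from the definition by block manipulations, because, as you noted, Schur complements involve parametrices of uncontrolled norm.

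\textbf{The paper's argument does not fill this gap either.} It uses the two angles $\theta=\pm\pi/4$, i.e.\ the real operators $A\pm B$ with $\|A\pm B\|\le\sqrt2\,\|T\|<C$. It takes $U_1,U_2$ with $U_1(\Id-(A+B)S)=\Id+K_1$ and $U_2(\Id-(A-B)S)=\Id+K_2$. It then asserts that $U=\frac12(U_1+U_2)+\frac i2(U_2-U_1)$ satisfies $U(\Id-T\widehat S)=\Id+K$ with $K$ compact. Expanding $U\bigl((\Id-AS)+i(-BS)\bigr)$, the real part is indeed $\Id+\frac12(K_1+K_2)$. The imaginary part, however, is $\frac12(K_2-K_1)-(U_1+U_2)BS$, and the last term is not compact in general. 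In the example above, $U_1=\frac12(\Id+V)$, $U_2=\frac12(\Id-V)$, $K_1=K_2=0$, and $U=\frac12(\Id-iV)$, yet $U(\Id-iV)=\Id-iV$. Even for commuting scalars, invertibility of $p+iq$ is governed by $p^2+q^2$, not by $(p+q)(p-q)$. So neither your rotation family nor the paper's pair $A\pm B$ works as it stands; a genuinely different argument is needed.
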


\begin{proof} 
Let $T \in L(X_\C)$ with $\|T\| < C/\sqrt{2}$. We need to show that $\Id_{X_\C} - T \widehat{S}$ is Fredholm. Write $T = A + iB$.
By writing $T(x+ix)=Ax-Bx+i(Ax+Bx)$, we see that
$\|A-B\| \leq \sup_{\|x\|=1} \|T(x+ix)\| \leq \|T\| \sqrt{2}<C$ and a similar estimate holds for $\|A+B\|$.
Therefore, by hypothesis, there exist operators $U_1, U_2 \in L(X)$ such that:

$$ U_1(\Id_X - (A + B)S) = \Id_X + K_1$$

$$ U_2(\Id_X - (A - B)S) = \Id_X + K_2,$$
where $K_1, K_2 \in  L(X)$ are compact operators. Now, consider the operator $U \in L(X_\C)$ defined by $U = \frac{1}{2}(U_1 + U_2) + i \frac{1}{2}(U_2 - U_1)$. Since:

$$ \Id_{X_\C} - T \widehat{S} = \Id_{X_\C} - AS + i(-BS), $$
we get:
$$ U(\Id_{X_\C} - AS + i(-BS)) = \Id + \frac{1}{2}(K_1 + K_2) + i \frac{1}{2}(K_2 - K_1) = \Id_{X_\C} + K, $$
where $K \in L(X_\C)$ is compact. Therefore, $\Id_{X_\C} - T \widehat{S}$ is Fredholm.
\end{proof}

We do not know whether an analogous result holds for $C$-strongly inessential, $\varepsilon$-singular, or $\varepsilon$-strongly singular operators.

\begin{proposition}\label{propo} Let $S \in L(X,Y)$.
    If $S$ is $\varepsilon$-singular, then $S$ is $1/\varepsilon$-strongly inessential.
\end{proposition}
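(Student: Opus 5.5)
The plan is to fix $T \in L(Y,X)$ with $\|T\|_S < 1/\varepsilon$ and show directly that $\Id_X - TS$ is Fredholm. By the definition of $C$-strongly inessential, this choice of product is allowed. The approach is to show that $TS$ is ``small'' in the sense of $\rho_s$, and then use a Neumann-series-type argument modulo the relevant ideal.

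\textbf{Step 1: bound $\rho_s(TS)$.} By Proposition \ref{seminorm}(e), with the roles arranged so that the singular factor acts first,
\[
\rho_s(TS) \le \|T\|_S\,\rho_s(S) \le \|T\|_S\,\varepsilon' ,
\]
where $\varepsilon'$ is slightly larger than $\varepsilon$ and $\|T\|_S\,\varepsilon'<1$. The point to verify is that (e) holds for operators between different spaces. Directly: given an infinite dimensional $Z\subseteq X$, first pass to $W\subseteq Z$ with $\|T_{|\overline{SW}}\|$ small. If $S_{|Z}$ is not an isomorphism, then $\rho_s$ is already small on $Z$; otherwise $SZ$ is infinite dimensional. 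In that case, choose $V \subseteq SZ$ with $\|T_{|V}\|\le \|T\|_S+\delta$, pull back to $W=S^{-1}V\cap Z$, and then pick a normalized $x\in W$ with $\|Sx\|\le\varepsilon'$. This gives $\|TSx\|\le(\|T\|_S+\delta)\varepsilon'$. Hence $R=TS$ satisfies $\rho_s(R)<1$, so every infinite dimensional subspace contains a unit vector $x$ with $\|Rx\|\le r<1$.

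\textbf{Step 2: $\Id - R$ is upper semi-Fredholm.} Suppose not. Then $\Id-R$ is infinitely singular, so for each $\eta>0$ there is an infinite dimensional $Z$ with $\|(\Id-R)_{|Z}\|<\eta$. By Step 1 there is a unit $x\in Z$ with $\|Rx\|\le r$. Then
\[
1=\|x\|\le \|(\Id-R)x\|+\|Rx\|< \eta + r ,
\]
which is a contradiction once $\eta<1-r$. Hence $\Id - R$ has finite dimensional kernel and closed range.

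\textbf{Step 3: finite codimension of the range.} This is the main obstacle, since $\rho_s$ does not pass to adjoints. I would use a homotopy argument instead. For $t\in[0,1]$, the operator $tR$ satisfies $\rho_s(tR)\le tr<1$, so Step 2 applies to $\Id - tR$ for every $t$. Thus $t\mapsto \Id - tR$ is a continuous path of upper semi-Fredholm operators, and by continuity of the index (\cite[Corollary 4.1]{Maurey}) the index is constant along the path. At $t=0$ the index is $0$, so $\operatorname{ind}(\Id-R)=0$. Since the kernel is finite dimensional, the cokernel is finite dimensional as well, and $\Id_X-TS$ is Fredholm.

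Finally, the equivalence with $\Id_Y-ST$ being Fredholm is the standard one, via the identity $(\Id-ST)S=S(\Id-TS)$, and is already built into the definition. I expect Step 3 to be the only delicate point, and the homotopy argument resolves it neatly.
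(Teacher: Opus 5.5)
Your argument is correct and is essentially the paper's, which adapts \cite[Proposition 2.c.10]{LT}: assume the perturbed identity is not upper semi-Fredholm, take an infinite-dimensional subspace on which it is small, and derive a contradiction from the strong singularity of $T$ together with the $\varepsilon$-singularity of $S$. Your bound $\rho_s(TS)\le\|T\|_S\,\rho_s(S)$ is not literally Proposition~\ref{seminorm}(e), which has $\|T\|$ in place of $\|T\|_S$, but your direct pull-back argument proves it. The paper works with $\Id_Y-ST$ instead, so that $T$ acts first and $\varepsilon$-singularity of $S$ applies directly on $T(Y')$ with no pull-back; its written proof also stops at finite-dimensional kernel and closed range, and your homotopy argument with continuity of the index in Step 3 supplies the finite codimension of the range that it leaves implicit.
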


\begin{proof}
This is essentially the proof of \cite[Proposition 
2.c.10]{LT}.
    Let $T$ an operator such that $\|T\|_S < 1/\varepsilon$. If $\dim \ker (\Id_X-ST)= \infty$, then there exists an infinite dimensional subspace $X'$ of $X$ such that $ST x= x$ for all $x\in X'$, and passing to a subspace we may also assume that
    $\|T_{|X'}\| <1/\varepsilon$. It follows that $T(X')$ is infinite dimensional and $\| S(Tz)\|\leq \varepsilon \|T_{|X'}\| \|z\|$ for some $z\in X'$, which is a contradiction.

    Assume that $\Id-ST$ has no closed range. Let $\|T\|_S<C<\frac{1}{\varepsilon}$.
    Then by \cite[Proposition 2.c.4]{LT} there exists an infinite dimensional subspace $X'$ of $X$ such that $\|(\Id_X -ST)_{|X'}\|\leq (1-\varepsilon C)/2$, and passing to a subspace we may also assume that
    $\|T_{|X'}\| < C$. Therefore, $ST_{|X'}$ is invertible, and there exists $z\in S_{X'}$ such that $\|S(Tz)\| \leq \varepsilon\|T_{|X'}\| \leq C $, again a  contradiction.
\end{proof}

Note that the converse of Proposition \ref{propo}  does not hold. Indeed the canonical inclusion $i$ of $\ell_2$ inside Kalton-Peck's space $Z_2$ is inessential but far from being singular.

\begin{lemma} \label{fredholm2}
Let $T: X \rightarrow Y$ be a Fredholm operator. Then there exists a constant $C(T) > 0$ such that if $S$ is $C$-inessential for some $C > C(T)$, we have $T + S$ is Fredholm, and $i(T + S) = i(T)$.
\end{lemma}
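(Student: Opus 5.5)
We use a Fredholm quasi-inverse of $T$ to reduce $T+S$ to a perturbation of the identity of the form covered by the definition of $C$-inessential. By the Atkinson characterization recalled above, there are $A \in L(Y,X)$ and compact operators $K_1 \in L(X)$, $K_2 \in L(Y)$ with
\[
AT = \Id_X + K_1, \qquad TA = \Id_Y + K_2,
\]
and we may take $\|A\| = \|T_0^{-1}\|$, where $T_0$ is the restriction of $T$ to a finite codimensional subspace on which it is an isomorphism. We set $C(T) := \|A\|$; the only role of this constant is to make $\|A\| < C$ whenever $C > C(T)$.

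Suppose $S$ is $C$-inessential with $C > C(T)$, and put $R := -A$, so $\|R\| < C$. By definition, both $\Id_Y - SR = \Id_Y + SA$ and $\Id_X - RS = \Id_X + AS$ are Fredholm. We compute
\[
(T+S)A = \Id_Y + SA + K_2, \qquad A(T+S) = \Id_X + AS + K_1 .
\]
The right-hand sides are Fredholm, being compact perturbations of Fredholm operators. Let $F := \Id_Y + SA + K_2$ have Fredholm inverse $B$ modulo compacts, and let $G := \Id_X + AS + K_1$ have Fredholm inverse $D$ modulo compacts. Then $(T+S)(AB)$ equals the identity modulo compacts, and $(DA)(T+S)$ equals the identity modulo compacts. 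So $T+S$ is invertible in the Calkin algebra, and hence Fredholm by Atkinson's theorem.

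For the index we use a homotopy. For $\lambda \in [0,1]$, the operator $\lambda S$ is $C/\lambda$-inessential: if $\|R\| < C/\lambda$ then $\|\lambda R\| < C$, and $\Id - (\lambda S)R = \Id - S(\lambda R)$. In particular $\lambda S$ is $C$-inessential, and the argument above shows that $T + \lambda S$ is Fredholm for every $\lambda \in [0,1]$. The map $\lambda \mapsto T + \lambda S$ is norm continuous into the Fredholm operators, and the index is locally constant there (continuity of the index, as recalled from \cite{Maurey}). Hence $i(T+S) = i(T+0\cdot S) = i(T)$.

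We expect the only delicate step to be the choice of $A$ with $\|A\| = C(T)$ independent of $S$, since the constant must be fixed before $S$ is chosen. The Atkinson normalization quoted in the text provides exactly this. If that normalization were unavailable, any fixed quasi-inverse $A$ would still work, with $C(T) := \|A\|$.
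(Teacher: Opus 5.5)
Your proof is correct. It shares the paper's key move, applying $C$-inessentiality of $S$ to $R=-A$ with $A$ a quasi-inverse of $T$, but extracts the conclusion differently.

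The paper takes $U=T_0^{-1}$ and a projection $P$ of $Y$ onto $TX$, and sets $C(T)=\|U\|\|P\|$. This bounds $\|UP\|$, so it is essentially your fallback choice $C(T)=\|A\|$. Your caution about the normalization $\|A\|=\|T_0^{-1}\|$ is well placed: for the inclusion $c_0\hookrightarrow c$, every quasi-inverse has norm at least $2$, although $\|T_0^{-1}\|=1$.

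The paper then uses only the one-sided identity $(T+S)x=(\Id_Y+SUP)Tx$ for $x\in X_0$. An infinite-dimensional kernel of $T+S$, or a non-closed range (via \cite[Proposition 2.c.4]{LT}, which gives an infinite-dimensional $X'\subseteq X_0$ on which $T+S$ is compact), would transfer to the infinite-dimensional subspace $TX'$. That contradicts Fredholmness of $\Id_Y+SUP$. As written, this only shows that $T+S$ is upper semi-Fredholm; finite codimension of the range and the index equality are left implicit.

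You instead use both products, $A(T+S)=\Id_X+AS+K_1$ and $(T+S)A=\Id_Y+SA+K_2$. The definition makes both $\Id_Y-SR$ and $\Id_X-RS$ Fredholm, so Atkinson's theorem gives full Fredholmness at once. Your observation that $\lambda S$ remains $C$-inessential for $\lambda\in[0,1]$, together with local constancy of the index along $\lambda\mapsto T+\lambda S$, then supplies the index claim.

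In short, the paper's argument is a hands-on kernel and range analysis on $X_0$. Yours is shorter, works purely modulo compacts, and covers every part of the statement.
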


\begin{proof}
Let $U = T_0^{-1}$, where $T_0$ is the restriction of $T$ to a finite codimensional subspace such that $T_0$ is an isomorphism onto its image, and $T_0 X = TX$. Since $TX$ is complemented in $Y$, let $P: Y \to Y$ be a projection onto $TX$. Define $C(T) = \|U\| \|P\|$. If $S$ is $C$-inessential with $C > C(T)$, then $\Id + S(UP)$ is Fredholm.

Now, assume $\dim \ker (T + S) = \infty$. Then there exists an infinite-dimensional subspace $X' \subseteq X_0$ such that $(T + S)_{|X'} = 0$. Since $(T + S)x = (\Id + SUP)Tx$ for every $x \in X_0$, we have $\dim \ker (\Id + SUP) = \infty$, which is a contradiction.

If $T + S$ does not have closed range, then by \cite[Proposition 2.c.4]{LT}, there exists an infinite-dimensional subspace $X'$ of $X_0$ such that $(T + S)_{|X'}$ is compact. This again leads to a contradiction, since $(T + S)x = (\Id + SUP)Tx$ for every $x \in X'$ and $\Id + SUP$ is Fredholm.
\end{proof}

Since the canonical inclusion $i: \ell_2 \rightarrow Z_2$ is upper semi-Fredholm, but also inessential, the above lemma does not admit an immediate generalization to upper semi-Fredholm operators. However we do have classically:

\begin{proposition}\label{fredholm}
Let $T: X \rightarrow Y$ be an upper semi-Fredholm  operator. Then there exists 
$\varepsilon(T)>0$ such that whenever $S$ is $\varepsilon$-singular for some $\varepsilon<\varepsilon(T)$, then
$T+S$ is also upper semi-Fredholm and $i(T+S)=i(T)$.
\end{proposition}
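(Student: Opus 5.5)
We follow the classical proof of stability of semi-Fredholm operators under strictly singular perturbations, keeping track of the constants. Since $T$ is upper semi-Fredholm, there is a finite codimensional subspace $X_0\subseteq X$ on which $T$ is an isomorphism onto its range. Let $\delta=\inf\{\|Tx\|: x\in X_0,\ \|x\|=1\}>0$. We set $\varepsilon(T)=\delta/2$, or more carefully any number below the relevant constant coming from \cite[Proposition 2.c.4]{LT}, which we fix in the course of the argument.

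\textbf{Step 1: $T+S$ is upper semi-Fredholm.} We argue by contradiction and suppose $T+S$ is not upper semi-Fredholm, i.e. it is infinitely singular. Then for every $\eta>0$ there is an infinite dimensional subspace $Z$ with $\|(T+S)_{|Z}\|\le\eta$ \cite[Proposition 3.2]{Maurey}. Intersecting with $X_0$, which has finite codimension, we may assume $Z\subseteq X_0$. Since $S$ is $\varepsilon$-singular, $Z$ contains a normalized $x$ with $\|Sx\|\le\varepsilon$. Then
$$\delta\le\|Tx\|\le\|(T+S)x\|+\|Sx\|\le\eta+\varepsilon,$$
which is impossible once $\eta<\delta-\varepsilon$. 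So $T+S$ is upper semi-Fredholm as soon as $\varepsilon<\delta$. This step is clean because $\varepsilon$-singularity gives exactly a single bad vector in each infinite dimensional subspace, and that is all the argument uses.

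\textbf{Step 2: the index is preserved.} We use the homotopy $t\mapsto T+tS$, $t\in[0,1]$. For each $t$, $tS$ is $t\varepsilon$-singular, hence $\varepsilon$-singular, so Step 1 shows every $T+tS$ is upper semi-Fredholm. The path is norm continuous, and the index is continuous on the set of upper semi-Fredholm operators with values in $\Z\cup\{-\infty\}$ \cite[Corollary 4.1]{Maurey}. It is therefore locally constant along the path, hence constant on the connected interval $[0,1]$. This gives $i(T+S)=i(T)$.

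\textbf{Main obstacle.} The one point that needs care is that Step 1 uses a threshold uniform in $t$; this holds because $\delta$ depends only on $T$. A second point is the convention for infinite cokernel: the index then equals $-\infty$, and continuity in \cite{Maurey} must be understood as local constancy in the extended sense. If that citation only covers finite index, we would instead prove local constancy of the index directly, using the standard small-norm perturbation argument together with the closed-range version of \cite[Proposition 2.c.4]{LT}.
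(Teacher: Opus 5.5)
Your proof is correct and is the classical argument the paper has in mind: the paper states this result without proof, only remarking that one may take $\varepsilon(T)=\|T_0^{-1}\|^{-1}$. Your Step 1 yields exactly this threshold, since $\delta=\|T_0^{-1}\|^{-1}$ and you showed that $\varepsilon<\delta$ suffices, so the hedging about $\delta/2$ and the constant from Lindenstrauss--Tzafriri is unnecessary.
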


Note that $\varepsilon(T)$ may be chosen equal to $\|T_0^{-1}\|^{-1}$ if $T_0$ is the restriction of $T$ to some finite codimensional subspace where it is an isomorphism onto its image.

\begin{proposition}  \label{spectrum}Let $S \in L(X)$.
 Assume $S$ is $1/\varepsilon$-inessential. 
 Then the spectrum of $S$ consists of a subset of the closed ball of radius $\varepsilon$ together with a sequence of eigenvalues of finite multiplicity which is either finite or such that the sequence of its moduli converges to 
 $\varepsilon$. 
\end{proposition}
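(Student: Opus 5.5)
The plan is to reduce to the classical Riesz theory for perturbations of Fredholm operators. For $\lambda \in \C$ with $|\lambda| > \varepsilon$, we show that $\lambda \Id - S$ is Fredholm of index $0$, after complexifying if $X$ is real. The set $\{|\lambda|>\varepsilon\}$ is connected and unbounded, and $\lambda\Id - S$ is invertible for large $|\lambda|$. The standard analytic Fredholm alternative (punctured neighbourhood theorem) then says that the spectrum of $S$ inside $\{|\lambda|>\varepsilon\}$ is a discrete set of eigenvalues of finite multiplicity. Such a set can only accumulate on the circle $|\lambda|=\varepsilon$, so its moduli are either finitely many or tend to $\varepsilon$. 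The remainder of the spectrum lies in the closed ball of radius $\varepsilon$.

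\textbf{Key step.} Fix $|\lambda|>\varepsilon$ and write $\lambda\Id - S = \lambda(\Id - \lambda^{-1} S)$. We need $\Id - \lambda^{-1}S = \Id - S T$ with $T=\lambda^{-1}\Id$ and $\|T\| = 1/|\lambda| < 1/\varepsilon$. In the complex case this follows directly from the definition of $1/\varepsilon$-inessential, since $T$ is an operator on $X$ of norm less than $1/\varepsilon$. The same definition with $T$ replaced by $tT$ for $t\in[0,1]$ shows that $\Id - t\lambda^{-1}S$ is Fredholm for all $t \in [0,1]$. By continuity of the index along this path, starting from $\Id$ at $t=0$, the index is $0$.

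In the real case the scalar $\lambda^{-1}$ is not available as an operator on $X$ unless $\lambda$ is real, so we pass to $X_\C$. Lemma \ref{complexification} only gives that $\widehat S$ is $1/(\sqrt2\varepsilon)$-inessential, which loses a factor $\sqrt2$. To avoid this loss, we instead treat $\lambda = a+ib$ directly. We want $\lambda\Id - \widehat S$ to be Fredholm; this holds iff $(\bar\lambda\Id-\widehat S)(\lambda\Id - \widehat S) = \widehat{(|\lambda|^2\Id - 2aS + S^2)}$ is Fredholm, since the two factors commute. We would therefore aim to show that the real operator $|\lambda|^2 - 2aS + S^2 = (|\lambda|^2\Id - 2aS + S^2)$ is Fredholm on $X$. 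Writing it as $|\lambda|^2(\Id - S T)$ with $T = |\lambda|^{-2}(2a\Id - S)$ does not obviously have norm less than $1/\varepsilon$. This is the main obstacle.

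What I would try first is to note that in the paper's convention the spectrum of a real operator is presumably understood as the spectrum of $\widehat S$, and to check whether the intended reading is simply the complex case. In the real case one can accept the $\sqrt 2$ loss: the statement would then hold for $\widehat S$ with radius $\sqrt2\varepsilon$ via Lemma \ref{complexification}, which suggests the authors intend $X$ complex, or real $\lambda$ only. So I would present the proof for complex $X$ (or for the real eigenvalues) as above. The finite multiplicity and discreteness come from the index-zero Fredholm property together with the standard fact that on a connected component of the Fredholm domain where the resolvent exists somewhere, the spectrum consists of isolated eigenvalues with finite-rank Riesz projections.
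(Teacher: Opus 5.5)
Your proof is correct and follows the paper's route: taking $T=\lambda^{-1}\Id$ (the paper phrases it as $\Id-\lambda S$ being Fredholm for $|\lambda|<1/\varepsilon$) gives Fredholmness, continuity of the index gives index $0$, and the isolated-eigenvalue argument (the paper simply points to the proof of Lemma 2.c.12 in Lindenstrauss--Tzafriri) gives a discrete spectrum accumulating only on $|\lambda|=\varepsilon$. Your explicit use of the punctured neighbourhood theorem and finite-rank Riesz projections just spells out what that citation covers. Your hesitation about real $X$ is justified. However, the paper's proof also works implicitly over complex scalars, and the paper only applies the proposition to complexifications $\widehat{s}$ after accepting the $\sqrt2$ loss from Lemma~\ref{complexification}, so restricting to complex $X$ is consistent with how the result is used.
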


\begin{proof} 
We observe that $\Id - \lambda S$ is Fredholm whenever $|\lambda| < 1/\varepsilon$, by $1/\varepsilon$-inessentiality, which rephrases as 
$S-\alpha \Id$ is Fredholm (with index 0 by continuity) whenever $|\alpha| > \varepsilon$. If such an $\alpha$ is an element of the spectrum, this  implies it is an eigenvalue with finite multiplicity which is isolated in the spectrum. This implies that the set of such values is finite or is countable with an accumulation point in the sphere of radius $\varepsilon$ (see the proof of Lemma 2.c.12 in \cite{LT}).
\end{proof} 

\section{Complex structures and ideals}

The aim of this section is to revisit results of \cite{FGEven} and \cite{CCFM} about inessential perturbations of complex structures; for example, by extending the condition that the perturbation has $\|.\|_S$-norm zero (i.e. is strictly singular) to the $\|.\|_S$-norm being small enough.
This will be done in the context of two complex structures on a space $X$, or of two complex structures on $X$ and one of its hyperplanes respectively.
\subsection{Perturbations of complex structures}

In what follows, let $X$ be a real  infinite dimensional Banach space.

\begin{lemma} 
If $J$ is a complex structure on $X$, then $\|J\|^{-1} \leq \|J\|_S \leq \|J\|$
\end{lemma}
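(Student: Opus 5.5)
The upper bound $\|J\|_S \leq \|J\|$ is immediate from Proposition \ref{seminorm}(a), since $\|J\|_S \leq \|J\|_{L(X)/S(X)} \leq \|J\|$. So the content of the lemma is the lower bound $\|J\|_S \geq \|J\|^{-1}$. I plan to obtain it from submultiplicativity of $\|\cdot\|_S$ together with the identity $J^2=-\Id$, and then to cross-check it by a direct subspace argument.

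\textbf{Step 1: compute $\|\Id\|_S$.} Since $\Id$ restricted to any infinite dimensional subspace has norm $1$, every infinite dimensional subspace $W$ satisfies $\|\Id_{|W}\| = 1$. Hence
$$\|\Id\|_S=\sup_{Z}\inf_{W\subseteq Z}\|\Id_{|W}\|=1,$$
and in particular $\|-\Id\|_S = 1$.

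\textbf{Step 2: apply submultiplicativity.} By Proposition \ref{seminorm}(d), $\|\cdot\|_S$ is submultiplicative on $L(X)$. Therefore
$$1=\|-\Id\|_S=\|J^2\|_S\leq \|J\|_S\,\|J\|_S .$$
This yields $\|J\|_S\geq 1$. On the other hand, every complex structure satisfies $\|J\|\geq 1$, because $\|x\| = \|J^2x\| \leq \|J\|^2\|x\|$. Combining the two, $\|J\|_S \geq 1 \geq \|J\|^{-1}$, which is the claimed lower bound.

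\textbf{Step 3: direct cross-check.} Alternatively, I can use the sharper mixed estimate $\|J^2\|_S\leq \|J\|\,\|J\|_S$. To see it, take an infinite dimensional $Z$ and $W\subseteq Z$ with $\|J_{|W}\|\leq \varepsilon$. Then for $x \in W$,
$$\|x\|=\|J(Jx)\|\leq \|J\|\,\|Jx\|\leq \|J\|\,\varepsilon\,\|x\|.$$
Since $W\neq 0$, this forces $\varepsilon\geq \|J\|^{-1}$, and taking the infimum over admissible $\varepsilon$ gives $\|J\|_S \geq \|J\|^{-1}$ directly.

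The only point needing care is the justification of submultiplicativity in Step 2, which is quoted from \cite{F} via Proposition \ref{seminorm}(d). The elementary argument of Step 3 avoids it entirely, since it uses only the definition of $\varepsilon$-strong singularity and the identity $J^2=-\Id$. So there is no real obstacle.
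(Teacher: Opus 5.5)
Your proof is correct and follows essentially the paper's route: the paper just cites Proposition \ref{seminorm}(f) with $T=J$, $T^{-1}=-J$, which combines the trivial bound from (a) with the submultiplicativity estimate $1=\rho_s(JJ^{-1})\le\rho_s(J)\|J^{-1}\|_S$, i.e.\ the same use of $J^{-1}=-J$ as in your Steps 2 and 3. As you observed, this argument in fact gives $\|J\|_S\ge 1$ (the chain $\|J\|_S^{-1}\le\rho_s(J)\le\|J\|_S$ in (f) yields it as well), which is stronger than the stated lower bound $\|J\|^{-1}$.
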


\begin{proof}
From Proposition \ref{seminorm} (f).
\end{proof}

\begin{pro}
    Let $I$ be a complex structure on $X$. Let $J$ be a complex structure on $X$  such that  $J-I$ is $c$-inessential with $c>\|I\|/2$, or $C$-strongly inessential with
    $C>\|I\|_S/2$. Then $I$ and $J$ are equivalent.
    \end{pro}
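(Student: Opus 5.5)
The standard intertwiner trick should work here. Put
$$T = \Id - IJ \quad\text{or rather}\quad T=I+J .$$
A direct computation gives
$$(I+J)J = IJ - \Id, \qquad I(I+J) = -\Id + IJ,$$
so $(I+J)J = I(I+J)$. If $I+J$ is an isomorphism, then $J=(I+J)^{-1}I(I+J)$ and the two structures are equivalent. In general $I+J$ need not be invertible, so I will show it is Fredholm of index $0$ and then correct it by a finite-rank perturbation that preserves the intertwining, as in \cite{FGEven}.

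\textbf{Step 1: Fredholmness.} Write $J = I + R$ with $R=J-I$. Then
$$I+J = 2I + R = 2I\bigl(\Id - \tfrac12 I^{-1}\cdot(-R)\bigr) = 2I\bigl(\Id + \tfrac12 IR\bigr),$$
using $I^{-1}=-I$. So it suffices that $\Id - TR$ is Fredholm for $T=-\tfrac12 I$.

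In the first case $\|T\| = \|I\|/2 < c$, and $c$-inessentiality of $R$ gives that $\Id - TR$ is Fredholm. In the strongly inessential case $\|T\|_S = \|I\|_S/2 < C$, and the conclusion is the same.

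For the index, the whole segment $\Id + \tfrac t2 IR$, $t\in[0,1]$, consists of Fredholm operators, since $\|tT\|\le \|T\|$ and likewise for $\|\cdot\|_S$. By continuity of the index along this path, the index is $0$. Hence $I+J$ is Fredholm of index $0$.

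\textbf{Step 2: invariance of kernel and cokernel.} The relation $(I+J)J=I(I+J)$ gives:
\begin{itemize}
\item $N=\ker(I+J)$ is $J$-invariant;
\item $M=\mathrm{Im}(I+J)$ is $I$-invariant.
\end{itemize}
So $N$ is a finite-dimensional complex space under $J$, and $X/M$ is a finite-dimensional complex space under the map induced by $I$. Both therefore have even real dimension, and since the index is $0$ these dimensions are equal, say $2k$.

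\textbf{Step 3: the finite-rank correction.} This is the step I expect to be the main obstacle.
\begin{enumerate}
\item Choose a $J$-invariant complement $X_0$ of $N$. To do this, take a $\C$-linear projection for the structure $X^J$ onto $N$, obtained by averaging any projection $P$ as $\tfrac12(P - JPJ)$.
\item Choose an $I$-invariant finite-dimensional complement $F$ of $M$, by lifting a $\C$-basis of $X/M$ and closing under $I$.
\item Take a $\C$-linear isomorphism $A:(N,J)\to(F,I)$; one exists because the complex dimensions agree.
\item Define $U=(I+J)$ on $X_0$ and $U=A$ on $N$.
\end{enumerate}
Then $U$ is a bijection $X_0\oplus N\to M\oplus F = X$, hence an isomorphism. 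It satisfies $UJ=IU$ on $X_0$ because $X_0$ is $J$-invariant and the intertwining holds there, and on $N$ because $A$ is $\C$-linear. Therefore $J=U^{-1}IU$.

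The delicate points are keeping the complements invariant and checking that the index is $0$, which uses the continuity of the index cited from \cite{Maurey}.
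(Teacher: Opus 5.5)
Your proof is correct and follows the paper's argument: $I+J=2I+(J-I)$ intertwines the two structures, the factorization through the invertible $2I$ together with the (strong) inessentiality of $J-I$ makes it Fredholm of index $0$, and your Step~3 spells out the $\C$-linear finite-rank correction that the paper leaves implicit. One harmless slip: the factorization is $2I+R=2I(\Id-\tfrac12 IR)$, not $2I(\Id+\tfrac12 IR)$, so the operator to feed into the inessentiality hypothesis is $\tfrac12 I$ rather than $-\tfrac12 I$; it has the same norm and the same $\|\cdot\|_S$-norm, so nothing else changes.
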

 
 \begin{proof}  Let us call $s= J-I$. The operator $T= 2I+s$ is $\C$-linear from $X^I$ to $X^J$.
 Also, $T=2I(Id-Is/2 )$  by definition is Fredholm (necessarily of index $0$) as a real operator since $s$ is $(\|I\|/2)+\varepsilon$-inessential or
 $(\|I\|_S/2)+\varepsilon$-strongly inessential. Then  
 the operator $T$ is Fredholm of index 0 as a $\C$-linear map. It follows that $X^I$ and  $X^J$ are $\C$-isomorphic. \end{proof}

 \begin{pro}\label{hum}  Let $I, J$ be  complex structures on a space $X$. Then  $I$ and $J$ are equivalent whenever
 $(I-J)^2$ is $C$-inessential with $C>1/4$. In particular,  $I$ and $J$ are equivalent whenever 
 $\rho_s((I-J)^2)<4$.
 \end{pro}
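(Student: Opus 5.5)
The plan is to produce an explicit $\C$-linear operator from $X^I$ to $X^J$ and show that it is Fredholm of index $0$, as in the preceding proposition. The natural candidate is $T=I+J$. It intertwines the two structures:
$$TI=I^2+JI=-\Id+JI=JI+J^2=JT,$$
so $T$ is $\C$-linear from $X^I$ to $X^J$.

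Next I compute the square of $T$. Expanding, $(I-J)^2=-2\Id-(IJ+JI)$ and $(I+J)^2=-2\Id+(IJ+JI)$. Adding these gives
$$(I+J)^2=-4\Id-(I-J)^2=-4\Bigl(\Id-(I-J)^2\bigl(-\tfrac14\Id\bigr)\Bigr).$$
The operator $-\tfrac14\Id$ has norm $1/4<C$. By the $C$-inessentiality of $(I-J)^2$, the operator $\Id-(I-J)^2(-\tfrac14\Id)$ is therefore Fredholm, and so $T^2$ is Fredholm.

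For the index I use a homotopy. For $t\in[0,1]$ the operator $-\tfrac t4\Id$ also has norm $<C$, so the whole path $\Id+\tfrac t4(I-J)^2$ consists of Fredholm operators. By continuity of the index along this path, $T^2$ has index $0$.

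From $T^2$ Fredholm, $T$ is Fredholm: $\ker T\subseteq\ker T^2$ is finite dimensional, and $T(X)\supseteq T^2(X)$ has finite codimension, with closed range automatic for finite codimension. Then $\operatorname{ind}(T^2)=2\operatorname{ind}(T)$ forces $\operatorname{ind}(T)=0$.

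Since $T$ is $\C$-linear, $\ker T$ and a complement of $T(X)$ may be chosen as $\C$-subspaces. Their real dimensions are equal by index $0$, so their complex dimensions are equal too. Choose a $\C$-linear finite rank $F$ mapping $\ker T$ isomorphically onto a $J$-invariant complement of $T(X)$ and vanishing on an $I$-invariant complement of $\ker T$. Then $T+F$ is a $\C$-linear isomorphism $X^I\to X^J$, i.e.\ $J=(T+F)I(T+F)^{-1}$, so $I$ and $J$ are equivalent. The step needing most care is this $\C$-linear bookkeeping: one must check that the relevant complements can be taken invariant, which holds because finite-dimensional $I$- or $J$-invariant subspaces are complemented by invariant complements (average a projection $P$ as $\frac12(P-IPI)$).

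For the last assertion, assume $\rho_s((I-J)^2)<4$ and pick $\varepsilon$ with $\rho_s((I-J)^2)<\varepsilon<4$. Then $(I-J)^2$ is $\varepsilon$-singular, hence $1/\varepsilon$-strongly inessential by Proposition \ref{propo}. This makes it $1/\varepsilon$-inessential with $1/\varepsilon>1/4$, so the first part applies.
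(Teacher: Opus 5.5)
Your proof is correct and follows essentially the same route as the paper: you use the identity $(I+J)^2+(I-J)^2=-4\Id$ to show that $I+J$ is a $\C$-linear Fredholm operator of index $0$ from $X^I$ to $X^J$, and you get the last assertion from Proposition \ref{propo}. Where you differ is only in detail: you spell out the steps the paper leaves implicit, namely the homotopy giving index $0$, the passage from $T^2$ to $T$, and the $\C$-linear finite-rank correction yielding an actual isomorphism.
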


\begin{proof}
 Note that $(I+J)^2+(I-J)^2=-4 \Id$ if $I$ and $J$ are complex structures.
 So $-\frac{1}{4}(I+J)^2=\Id+\frac{1}{4} (I-J)^2$.
 If $(I-J)^2$ is $C$-inessential for $C>1/4$, then $I+J$ is Fredholm with index $0$ and so $I$ and $J$ are equivalent.
\end{proof}
 \begin{corollary} Let $I, J$ be  complex structures on a space $X$. Then $I$ and $J$ are equivalent whenever $\rho_S(I-J)\|I-J\|_S <4$, and in particular, whenever $\|I-J\|_S<2$. 
 \end{corollary}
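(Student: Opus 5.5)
The plan is to reduce the corollary to Proposition \ref{hum}, which says that $I$ and $J$ are equivalent as soon as $\rho_s((I-J)^2)<4$. It therefore suffices to bound $\rho_s((I-J)^2)$ by the quantity appearing in the hypothesis.

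Write $s=I-J$. By Proposition \ref{seminorm}(e), applied with $T=U=s$,
$$\rho_s(s^2)\le \min\bigl(\rho_s(s)\|s\|_S,\ \|s\|\rho_s(s)\bigr)\le \rho_s(s)\,\|s\|_S .$$
Hence the hypothesis $\rho_s(I-J)\|I-J\|_S<4$ gives $\rho_s((I-J)^2)<4$, and Proposition \ref{hum} yields that $I$ and $J$ are equivalent.

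For the particular case, Proposition \ref{seminorm}(a) gives $\rho_s(s)\le\|s\|_S$. Thus $\rho_s(s)\|s\|_S\le\|s\|_S^2$, and this is $<4$ whenever $\|s\|_S<2$.

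The only point needing care is the step inside Proposition \ref{hum} where $\rho_s(s^2)<4$ is converted into $C$-inessentiality of $s^2$ with $C>1/4$. This is supplied by Proposition \ref{propo}: an $\varepsilon$-singular operator is $1/\varepsilon$-strongly inessential, and in particular $1/\varepsilon$-inessential. Pick $\varepsilon$ with $\rho_s(s^2)<\varepsilon<4$; by definition of $\rho_s$, $s^2$ is $\varepsilon$-singular, so it is $1/\varepsilon$-inessential with $1/\varepsilon>1/4$, as required.

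I do not expect any real obstacle here. The corollary is a direct combination of the submultiplicativity-type estimate in Proposition \ref{seminorm}(e) with Proposition \ref{hum}.
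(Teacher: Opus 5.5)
Your proof is correct and is exactly the paper's argument: combine $\rho_s(s^2)\le\rho_s(s)\|s\|_S$ from Proposition \ref{seminorm}(e) with Proposition \ref{hum}, and use $\rho_s(s)\le\|s\|_S$ from Proposition \ref{seminorm}(a) for the particular case. Your appeal to Proposition \ref{propo} to pass from $\rho_s((I-J)^2)<4$ to $C$-inessentiality with $C>1/4$ supplies a step that the paper leaves implicit in the ``in particular'' clause of Proposition \ref{hum}.
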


\begin{proof} A combination of Proposition \ref{hum} and Proposition \ref{seminorm}(e).
\end{proof}

\subsection{Operators whose square are close to $-\Id$} \label{perturbations}
The next series of results extends the case of inessential perturbations from \cite{FGEven}: the general principle is that if the square of an operator $I$ on $X$ is close enough to $-\Id_X$, then $I$ must be close to some complex structure.
In our first result, when proximity to $-\Id_X$ is relative to the norm, then we obtain closeness to a complex structure on the whole space $X$.
 
 \begin{pro} Let $I\in L (X)$ such that $\alpha=\|I^2 + \Id\|< 1$. Then there exists $S \in \mathcal L (X)$ belonging to the closed algebra generated by $I$ and $\Id$, with  $\| S\| \leq \|I\| (-1+ (1-2\alpha)^{-1/2})$ such that $(I+S)^2=-\Id$. 
 \end{pro}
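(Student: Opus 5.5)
The plan is to look for $J:=I+S$ of the form $J = I\,f(A)$ with $A := I^2+\Id$, so that $\|A\|=\alpha<1$ and $A$ commutes with $I$. We want $J^2 = I^2 f(A)^2 = (A-\Id) f(A)^2 = -\Id$. This is solved by $f(A) = (\Id - A)^{-1/2}$, defined by the binomial power series
\[
(\Id-A)^{-1/2}=\sum_{k\ge 0}\binom{2k}{k}\frac{A^k}{4^k}.
\]
The series converges absolutely in norm since $\|A\|<1$, and the identity $f(A)^2(\Id-A)=\Id$ holds by the usual formal power series identity $\bigl((1-t)^{-1/2}\bigr)^2(1-t)=1$, applied in the commutative closed algebra generated by $A$. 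Then
\[
J^2=I^2f(A)^2=-(\Id-A)f(A)^2=-\Id,
\]
using that $I$ commutes with $A$. Thus $S=I(f(A)-\Id)$ lies in the closed algebra generated by $I$ and $\Id$, as required.

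For the norm estimate, write
\[
\|S\|\le \|I\|\,\|f(A)-\Id\|\le \|I\|\sum_{k\ge1}\binom{2k}{k}\frac{\alpha^k}{4^k}=\|I\|\bigl((1-\alpha)^{-1/2}-1\bigr).
\]
This is already at most $\|I\|\bigl((1-2\alpha)^{-1/2}-1\bigr)$ when $\alpha<1/2$, which gives the stated bound in that range.

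The main obstacle is the exponent $1-2\alpha$ in the statement: it suggests the authors had a different construction in mind, and for $1/2\le\alpha<1$ the stated expression is not even real. I would therefore check whether the hypothesis should really read $\alpha<1/2$, or whether they use a cruder bound. The bound above is stronger and valid on the full range $\alpha<1$, so I would present it and remark that it implies the stated inequality wherever that inequality makes sense.
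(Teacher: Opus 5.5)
Your proof is correct and is essentially the paper's argument. The paper sets $R=\sum_{n\ge1}b_n(I^2+\Id)^n$ with $\sum b_nz^n=(1-z)^{-1/2}-1$, takes $S=IR$, and obtains $\|S\|\le\|I\|\bigl((1-\alpha)^{-1/2}-1\bigr)$, so you are right that the $1-2\alpha$ in the statement is a misprint.
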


\begin{proof}
Let us call $s= I^2+\Id$. 
The series $\sum_{n=1}^\infty b_n s^n$ converges to an operator $R$, where $\sum_{n=1}^\infty b_n z^n = -1 + (1-z)^{-1/2}$ for every $|z|<1$, and we observe that
$(I+IR)^2=-\Id$.
The result follows by taking $S=IR$, since $\|S\| \leq \|I\| \sum_{n=1}^\infty b_n (\|s\|)^n= \|I\| (-1+ (1-\alpha)^{-1/2})$.
\end{proof}

As a consequence, if a real space $X$ does not admit complex structures, then $T^2+\Id$ must have norm at least $1$ for any $T \in L(X)$. Examples of this are spaces with few operators, i.e. where every operator $T$ is a strictly singular perturbation of $\lambda \Id$, as in \cite{gowersmaurey}; then $T^2+\Id=(1+\lambda^2)Id+S$ has actually $\|.\|_S$-norm at least $1$. 

The previous proposition admits a version for $C$-inessential or $\varepsilon$-singular properties, but in this case, complex structures on hyperplanes must be taken into account as well, as in \cite[Proposition 10]{FGEven}:

\begin{pro} 
    Let $Y$ be an infinite dimensional real Banach space and let $X= \R \oplus Y$. Let $I \in L(X)$  such that $I^2 + \Id$ is $C$-inessential for some $C>\sqrt 2$. Then there exists an operator $S \in L(X)$ belonging, up to a finite rank perturbation, to the closed algebra generated by $I$ and $\Id$  such that either
    
    $$(I+S)^2=-\Id \, \, \text{or} \, \, I+S= \left[ {\begin{array}{cc}
   1 & 0 \\
   0 & J \\
  \end{array} } \right]$$ 
  where $J\in  L(Y)$ satisfies $J^2=-\Id$.

If furthermore $I^2+Id$ was $\varepsilon$-strongly singular,  $0<\varepsilon< 1$, then $S$ may be chosen to be $\|I\|_S (-1+(1-\varepsilon)^{-1/2})$-strongly singular.  
\end{pro}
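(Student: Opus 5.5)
Let $s=I^2+\Id$. The plan is to use spectral theory in the quotient by the inessential ideal, then a Riesz projection, then the square-root series of the preceding proposition.

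\textbf{Step 1: spectrum of $I$.} Since $s$ is $C$-inessential with $C>\sqrt2$, its complexification $\widehat s$ is $C/\sqrt2$-inessential by Lemma \ref{complexification}, and $C/\sqrt2>1$. Proposition \ref{spectrum}, applied with $\varepsilon=\sqrt2/C<1$, then says that $\sigma(\widehat s)$ is contained in the closed ball of radius $\varepsilon$, apart from finite-multiplicity eigenvalues accumulating at most on $|z|=\varepsilon$. By the spectral mapping theorem, $\sigma(\widehat I)$ consists of:
\begin{itemize}
\item a part $\sigma_0$ contained in $\{\lambda:|\lambda^2+1|\le\varepsilon\}$;
\item isolated eigenvalues $\lambda$ of finite multiplicity with $|\lambda^2+1|>\varepsilon$.
\end{itemize}
Since $\varepsilon<1$, the region $\{|\lambda^2+1|\le \varepsilon\}$ splits into two disjoint components, around $i$ and $-i$, and neither meets the real axis.

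\textbf{Step 2: removing the outlying eigenvalues.} Only finitely many outlying eigenvalues lie outside a slightly larger region $\{|\lambda^2+1|\le\varepsilon'\}$ with $\varepsilon<\varepsilon'<1$. Let $P$ be the Riesz projection of $\widehat I$ onto this finite set. It is finite rank and commutes with $\widehat I$. Because $I$ is real, the spectrum of $\widehat I$ is conjugation-symmetric, so $P$ is the complexification of a real finite-rank projection $P_0$ on $X$ commuting with $I$. On the finite-dimensional space $F=P_0X$, we replace $I$ by a finite-rank operator. On the complementary invariant subspace $G=\ker P_0$, the operator $I_{|G}$ has $\|(I^2+\Id)_{|G}\|$-type control in the sense of the spectral radius: $r(s_{|G})\le\varepsilon'<1$.

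\textbf{Step 3: square root on $G$.} With $r(s_{|G})<1$, the series $R=\sum b_n s^n$ from the preceding proposition still converges on $G$, by the root test, using $r$ instead of the norm. This gives $(I+IR)^2_{|G}=-\Id_G$, and $IR$ lies in the closed algebra generated by $I$ and $\Id$.

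\textbf{Step 4: the finite-dimensional piece.} The space $F$ is finite dimensional. Hence $G$ is a finite-codimensional subspace of $X$ carrying a complex structure $J_G=(I+IR)_{|G}$. The codimension of $G$ is either even or odd.
\begin{itemize}
\item If it is even, put any complex structure on $F$. This yields $I+S$ with $(I+S)^2=-\Id$ on $X$.
\item If it is odd, choose a complex structure on a hyperplane of $F$ and the scalar $1$ on a complementary line. After a finite-rank change of decomposition, identifying $X$ with $\R\oplus Y$ via an isomorphism that differs from the given one by finite rank, this yields the second block form.
\end{itemize}
The main obstacle I expect is this last bookkeeping: matching an arbitrary finite-codimensional invariant decomposition with the \emph{given} splitting $\R\oplus Y$ while only perturbing by finite rank. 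To do this, I would absorb the discrepancy into $S$, using that any two complements of equal finite dimension differ by a finite-rank operator.

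\textbf{Strongly singular case.} If $s$ is $\varepsilon$-strongly singular, then $\|s\|_S\le\varepsilon$. Submultiplicativity of $\|\cdot\|_S$ from Proposition \ref{seminorm}(d) gives
\[
\|IR\|_S\le \|I\|_S\sum_n b_n\varepsilon^n=\|I\|_S\bigl(-1+(1-\varepsilon)^{-1/2}\bigr).
\]
Finite-rank corrections do not change $\|\cdot\|_S$ (Proposition \ref{seminorm}(b)), so this bound holds for $S$.
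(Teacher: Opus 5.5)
Your proposal is correct and follows essentially the same route as the paper. You complexify so that $\widehat s$ is $C/\sqrt2$-inessential, use Proposition \ref{spectrum} to split off a finite-rank, conjugation-invariant Riesz projection, run the square-root series on the complementary part where the spectral radius of $s$ is below $1$, choose the finite-dimensional piece according to parity, and bound $\|IR\|_S$ by submultiplicativity. Your projection taken via $\widehat I$ coincides with the paper's projection $\widehat Q$ for $\widehat s$, and your finite-rank conjugation matching the given splitting $\R\oplus Y$ spells out the step the paper delegates to Ferenczi--Galego.
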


\begin{proof}
    Let us call $s= I^2+\Id$. Going through the complexification, $\hat s$ is $C/\sqrt{2}$-inessential by Lemma \ref{complexification}. Let $\Gamma$ be a rectangular curve with horizontal and vertical edges, symmetric with respect to the horizontal axis, included in the region $\{z \in \C: \sqrt{2}/C<|z|<1\}$  and such that $\Gamma \cap Sp(\hat s)= \emptyset$.  
    Let $U$ and $V$ be the bounded and unbounded open regions delimited by $\Gamma$, respectively. Let $\widehat P$ and $\widehat Q$ be the spectral projections associated with $\Sp(\hat s) \cap U$ and $\Sp(\hat s) \cap V$. Then $\hat s = \hat s \widehat P + \hat s \widehat Q.$
    Since $\hat s \widehat P$ has spectral radius strictly less than $1$, following the argument of Ferenczi-Galego \cite[Proposition 10]{FGEven} as in the previous proposition,  consider
    $ R = \sum_{n=1}^\infty b_n ( s  P)^n$ and $\widehat R = \sum_{n=1}^\infty b_n (\hat s \widehat P)^n$, where $\sum_{n=1}^\infty b_n z^n = -1 + (1-z)^{-1/2}$ for every $|z|<1$ (note that by basic properties of functional calculus, the convergence of the series in the definition of $R$ in in norm, because the convergence radius of $\sum_{n=1}^\infty b_n z^n$ is $1$). It follows that $(\widehat I \widehat P+ \widehat I \widehat R)^2= - \widehat P$. Note that $IR= IsLP$, where $L= \sum_{n=1}^\infty b_n  s^{n-1}  $ and then $IR=IsL-IsLQ$.
    
    Therefore, under the assumption that $\|s\|_S \leq \varepsilon< 1$ and that $s$ is $C>\sqrt 2$-inessential 
    (which holds in particular if $\|s\|_S<\sqrt{2}/2$ by Proposition \ref{propo}), we deduce that 
    $$\|IR\|_S=\|IsL\|_S \leq \|I\|_S \sum_{n=1}^\infty b_n \|s\|_S^n=\|I\|_S(1-(1-\|s\|_S)^{-1/2}) \leq \|I\|_S (1-(1-\varepsilon)^{-1/2}).$$
Note that it follows from Proposition~\ref{spectrum} that $Q$ has finite rank. If $Q$ has even rank, the same argument by Ferenczi-Galego \cite[Proposition 10]{FGEven} yields a real finite-rank operator $r$ on $QX$ such that $(\widehat I \widehat Q + \hat r)^2 = -\widehat Q$. Therefore, $S = IR + r$ satisfies $(I + S)^2 = -\Id$, and  $\|S\|_S \leq \|I\|_S(1- (1-\varepsilon)^{-1/2})$ in case $\|s\|_S \leq \varepsilon<1$.

If $Q$ has odd rank, the argument by Ferenczi-Galego \cite[Proposition 10]{FGEven} also provides a finite-rank operator $r$ such that $I+(IR+r) =   \left[ {\begin{array}{cc}
   1 & 0 \\
   0 & J \\
  \end{array} } \right]$ corresponding to the decomposition $\R \oplus Y$ for some operator $J$ with $J^2=-\Id$.
\end{proof}

\begin{pro}
 Let $0<\varepsilon \leq 2/3$. Assume $J$ is a complex structure on $X$, and $S \in L(X)$ is 
 $\varepsilon^{-1}$-inessential.  Then all points of the spectrum of $J+S$ at distance  to $\{-i,i\}$ strictly bigger than $\alpha:=3\|J\|\varepsilon$  are eigenvalues with finite multiplicity.
\end{pro}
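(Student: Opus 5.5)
The plan is to treat $\lambda$ in the spectrum of the complexification $\widehat{J+S}=\widehat J+\widehat S$ and to factor $\widehat J+\widehat S-\lambda$ through the invertible operator $\widehat J-\lambda$. The key identity is $(\widehat J-\lambda)(\widehat J+\lambda)=\widehat J^2-\lambda^2=-(1+\lambda^2)\Id$. Hence for $\lambda\neq\pm i$ the operator $\widehat J-\lambda$ is invertible with
$$R_\lambda:=(\widehat J-\lambda)^{-1}=-\frac{\widehat J+\lambda}{(\lambda-i)(\lambda+i)},\qquad \|R_\lambda\|\leq \frac{\|J\|+|\lambda|}{|\lambda-i|\,|\lambda+i|}.$$
We then write $\widehat J+\widehat S-\lambda=(\widehat J-\lambda)(\Id+R_\lambda\widehat S)$. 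If $\Id+R_\lambda\widehat S$ is Fredholm, then $\widehat J+\widehat S-\lambda$ is Fredholm. Its index is $0$, by continuity of the index along a path from a point far out where the operator is invertible, exactly as in the proof of Proposition \ref{spectrum}. A Fredholm operator of index $0$ that is not invertible has $\lambda$ as an eigenvalue of finite multiplicity, which is the claim. So everything reduces to showing that $\Id+R_\lambda\widehat S$ is Fredholm when $d:=\min(|\lambda-i|,|\lambda+i|)>\alpha=3\|J\|\varepsilon$.

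The norm estimate comes next. Let $D=\max(|\lambda-i|,|\lambda+i|)$. Then $D\geq 1$, because $|\lambda-i|+|\lambda+i|\geq 2$, and both $d,D\geq|\lambda|-1$. Note also that $\|J\|\geq 1$.

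\emph{Case $|\lambda|\leq 2\|J\|$.} Here $\|J\|+|\lambda|\leq 3\|J\|<d\,\varepsilon^{-1}\leq dD\varepsilon^{-1}$.

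\emph{Case $|\lambda|>2\|J\|\geq 2$.} Here $D\geq|\lambda|/2$, so $dD>\tfrac32\|J\|\varepsilon|\lambda|\geq\varepsilon(\|J\|+|\lambda|)$.

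In both cases $\|R_\lambda\|<\varepsilon^{-1}$, with a margin that one can quantify.

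The main obstacle is transferring inessentiality to the complexification. Lemma \ref{complexification} only gives that $\widehat S$ is $\varepsilon^{-1}/\sqrt2$-inessential, so the crude bound $\|R_\lambda\|<\varepsilon^{-1}$ does not suffice directly. I would first try to avoid the loss by exploiting the structure of $R_\lambda$. We have $R_\lambda=c_1\widehat J+c_2\Id$ with complex scalars $c_1,c_2$, so $R_\lambda\widehat S=c_1\widehat{JS}+c_2\widehat S$.

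For real $\lambda$ no complexification is needed: $R_\lambda$ is the complexification of a real operator $T$ with $\|T\|<\varepsilon^{-1}$, and $\Id+T S$ is Fredholm directly from the hypothesis.

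For nonreal $\lambda=a+ib$, I would instead use the real operator
$$(J+S)^2-2a(J+S)+|\lambda|^2=(J+S-\lambda)(J+S-\bar\lambda),$$
and factor it as $(J^2-2aJ+|\lambda|^2)(\Id+\text{real operator}\cdot S)$. Fredholmness of this real operator gives Fredholmness of $\widehat J+\widehat S-\lambda$, since the complexification of the real operator equals $(\widehat J+\widehat S-\lambda)(\widehat J+\widehat S-\bar\lambda)$ and the factors commute.

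If the resulting norm estimate loses a constant factor, I would compensate using the slack in the two-case bound above together with the assumption $\varepsilon\leq 2/3$. That assumption guarantees $\alpha\leq 2\|J\|$, so the exceptional regions around $\pm i$ are controlled in terms of $\|J\|$.
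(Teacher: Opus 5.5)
Your main line is exactly the paper's proof. You factor through $J-\lambda$, bound the inverse by $\frac{\|J\|+|\lambda|}{|1+\lambda^2|}$ using $(J-\lambda)(J+\lambda)=-(1+\lambda^2)$, and split into $|\lambda|\le 2\|J\|$ and $|\lambda|>2\|J\|$. The paper then applies the $\varepsilon^{-1}$-inessentiality of $S$ directly to $(J-\lambda)^{-1}$. It never addresses that for nonreal $\lambda$ this operator lives only on $X_\C$. You are right that this needs an argument, and your treatment of real $\lambda$ is fine.

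Your patch for nonreal $\lambda$ fails, however. With $Q=J^2-2aJ+|\lambda|^2$,
\[
(J+S)^2-2a(J+S)+|\lambda|^2=Q+(J+S-2a)S+SJ .
\]
The cross term $SJ$ is in general not a left multiple of $S$, so the claimed factorization $Q(\Id+VS)$ need not exist. Even when it does (say $SJ=JS$, giving $V=Q^{-1}(2J+S-2a)$), the norm of $V$ is governed by $\|S\|$. That norm is not controlled by $\varepsilon^{-1}$-inessentiality: adding a compact operator of huge norm to $S$ preserves the hypothesis. So this route cannot yield a bound below $\varepsilon^{-1}$, and no appeal to slack rescues it.

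The way to handle the nonreal case is to keep $\Id+R_\lambda\widehat S$ and sharpen the resolvent estimate so that it absorbs the factor $\sqrt2$ of Lemma \ref{complexification}. By symmetry take $\mathrm{Im}\,\lambda\ge0$. Then $|\lambda+i|^2=|\lambda|^2+2\,\mathrm{Im}\,\lambda+1\ge1+|\lambda|^2$ and $|\lambda-i|>\alpha$. Cauchy--Schwarz together with $\|J\|\ge1$ gives
\[
\|R_\lambda\|<\frac{\|J\|+|\lambda|}{\alpha\sqrt{1+|\lambda|^2}}\le\frac{\sqrt{\|J\|^2+1}}{3\|J\|\varepsilon}\le\frac{\sqrt2}{3\varepsilon}<\frac{1}{\sqrt2\,\varepsilon}.
\]
This holds for every admissible $\lambda$, real or not, and does not use $\varepsilon\le 2/3$.

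Your two-case bounds do not show this margin. For $\|J\|=1$ and $|\lambda|\downarrow2$, your second case yields only $\|R_\lambda\|<\varepsilon^{-1}$. So this sharpening is a step you would actually have to supply.

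Be aware that this only reduces the matter to Lemma \ref{complexification}, whose printed proof controls just the real part of $U(\Id-T\widehat S)$. The imaginary part is $\tfrac12U_2(\Id-(A+B)S)-\tfrac12U_1(\Id-(A-B)S)$. For $A=0$, $B=\Id$, $S=s\Id$ with $0<|s|<1/\sqrt2$, the lemma's hypotheses hold, yet this equals $-\tfrac{2s}{1-s^2}\Id$ modulo compact operators. So the complexification issue you spotted is not actually resolved by the paper and needs an independent argument.
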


\begin{proof} 
Let $\lambda$ be in the spectrum of $J+S$. Since $(J-\lambda \Id)(J+\lambda \Id)=-1-\lambda^2$, if $\lambda \notin \{-i,i\}$ then
$J-\lambda \Id$ is invertible and
its inverse has norm at most $\frac{\|J\|+|\lambda|}{|1+\lambda^2|}$. If $\varepsilon<
\frac{|1+\lambda^2|}{\|J\|+|\lambda|}$ then
$J-\lambda \Id+S$ is still Fredholm with index $0$ and therefore $\lambda$ is an eigenvalue of $J+S$ with finite multiplicity.

Finally we note that if $|\lambda+i|>\alpha$ and $|\lambda-i|>\alpha$ (and wlog $Im(\lambda) \geq 0$), then
$|1+\lambda^2|>\alpha$.
If $|\lambda| \leq 2\|J\|$ then
it follows that
$\frac{|1+\lambda^2|}{\|J\|+|\lambda|} > \frac{\alpha}{3\|J\|}=\varepsilon$. If $|\lambda| > 2\|J\|$ then we have
$\frac{|1+\lambda^2|}{\|J\|+|\lambda|} > \frac{2(|\lambda|^2-1)}{3|\lambda|}  \geq \frac{2\|J\||\lambda|}{3\|\lambda|}\geq \frac{2}{3} \geq \varepsilon$.
\end{proof}

\subsection{Complex structures and hyperplanes}

\begin{lemma}\label{par} Let $I$ and $J$ be complex structures on a space $X$ and on $H$ hyperplane of $X$, respectively. 
Then the operator $T=I_{|H}+i_{H,X}J$ is a $\C$-linear operator from $H_J$ into $X_I$.\end{lemma}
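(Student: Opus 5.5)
The plan is a direct algebraic verification; nothing beyond the identities $I^2=-\Id_X$ and $J^2=-\Id_H$ is needed. First, $T$ is a bounded $\R$-linear map from $H$ into $X$, since it is the sum of the restriction of the bounded operator $I$ to the closed subspace $H$ and of the composition of $J$ with the (isometric) inclusion $i_{H,X}$. The norms $\||\cdot|\|$ defining $H_J$ and $X_I$ are equivalent to the original norms, so boundedness carries over unchanged.

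It then remains to show that $T$ commutes with the action of the scalar $i$. In $H_J$ multiplication by $i$ is given by $J$, and in $X_I$ it is given by $I$. An $\R$-linear map is $\C$-linear exactly when it commutes with multiplication by $i$, because a general complex scalar $a+ib$ acts as $a\Id+bJ$, respectively $a\Id+bI$. So the claim reduces to the operator identity $T J = I T$ as maps $H\to X$.

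To verify it, take $h\in H$ and note that $Jh\in H$, so $T(Jh)$ makes sense. Then
$$T(Jh)=I(Jh)+J(Jh)=IJh-h,$$
while
$$I(Th)=I(Ih)+I(Jh)=-h+IJh,$$
using $J^2=-\Id_H$ in the first line and $I^2=-\Id_X$ in the second, with $Jh$ viewed in $X$ through $i_{H,X}$. The two expressions coincide, which proves the lemma. The only point requiring a little care, hardly an obstacle, is the bookkeeping of the inclusion $i_{H,X}$: one must keep in mind that $IJh$ means $I$ applied to the vector $Jh\in H\subset X$.
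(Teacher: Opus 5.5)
Your proof is correct and is essentially the paper's argument: the paper likewise just checks that $TJx = IJx - x$ and $ITx = -x + IJx$ for all $x\in H$, so that $TJ = IT$. Your added remarks on boundedness and on why commuting with multiplication by $i$ gives $\C$-linearity are fine and only make explicit what the paper leaves implicit.
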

\begin{proof}
Indeed for all $x \in H$, we have
$TJx=IJx-x$ and
$ITx=-x+IJx$.
\end{proof}

As a curious consequence we obtain:

\begin{proposition} Let $1<p<\infty$. Assume Kalton's space $Z_p$ is not isomorphic to its hyperplanes. If $I$ is a complex structure on $Z_p$ and $J$ is a complex structure on its hyperplane $H$, then the operator
$T=I_{|H}+i_{H,X}J \in L(H,X)$ is not Fredholm.
\end{proposition}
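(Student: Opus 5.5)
We argue by contradiction. Suppose $T=I_{|H}+i_{H,X}J$ is Fredholm as a real operator from $H$ to $X$ (here $X=Z_p$). By Lemma \ref{par}, $T$ is $\C$-linear from $H^J$ into $X^I$. The plan is to compute the real index of $T$ in two ways: once from its $\C$-linearity, and once from the assumption that $Z_p$ is not isomorphic to its hyperplanes. The two computations will give indices of different parity.

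First, the parity. Since $T$ is $\C$-linear, $\ker T$ is a $\C$-subspace of $H^J$, and $T(H)$ is a $\C$-subspace of $X^I$. A finite-dimensional complex space has even real dimension. So $\dim_\R \ker T$ is even. For the cokernel, choose a $\C$-linear complement of $T(H)$ in $X^I$: a finite-dimensional $\C$-subspace $F$ with $X=T(H)\oplus F$. Then $\dim_\R X/T(H)=\dim_\R F$ is even. Hence the real index $\operatorname{ind}(T)=\dim_\R\ker T-\dim_\R \operatorname{coker} T$ is even.

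Second, the index from the geometry. Write $i=i_{H,X}$ for the inclusion, which is Fredholm of index $-1$. The restriction $I_{|H}=I\circ i$ is the composition of the isomorphism $I$ with $i$, so it is also Fredholm of index $-1$. Next we relate $T$ to $I_{|H}$. A Fredholm operator $H\to X$ of index $k$ can be corrected by finite-rank maps and finite-dimensional summands to an isomorphism between $H$ and a subspace of $X$ of codimension $k$, or between $H\oplus\R^{k}$ and $X$ when $k\ge 0$. Since $Z_p\simeq Z_p\oplus\R^2$, every finite-codimensional subspace of $X$ is isomorphic to $X$ or to $H$, according to the parity of its codimension. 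The same holds for $H$ itself. So if $T$ had odd index, it would give an isomorphism between $H$ and a space of the form $X$ (up to even-dimensional adjustments), i.e.\ $H\simeq X$, contradicting the hypothesis. Hence, under the hypothesis, $\operatorname{ind}(T)$ is odd, consistent with the index $-1$ of $I_{|H}$.

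The main obstacle is exactly this last step: we must justify that the index of $T$ is odd. The parity argument just given only controls even-codimension shifts. Without the hypothesis, nothing forces $T$ to have the index parity of $I_{|H}$, since $T$ and $I_{|H}$ differ by $iJ$, which is not small or inessential. The hypothesis closes this gap. A Fredholm operator $H\to X$ of even index $2m$ yields, after finite-dimensional modifications, an isomorphism between $H$ and either $X\oplus\R^{2m}\simeq X$ (if $m\ge 0$) or a subspace of $X$ of even codimension $-2m$, which is isomorphic to $X$. Either way we obtain $H\simeq X$, which is excluded. Therefore every Fredholm operator $H\to X$ has odd index, and $\operatorname{ind}(T)$ is odd. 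This contradicts the first step, where $\C$-linearity forced the index to be even. Hence $T$ is not Fredholm.
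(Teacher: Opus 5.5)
Your argument is correct and is essentially the paper's: $\C$-linearity of $T$ (Lemma \ref{par}) forces an even real index, and an even-index Fredholm operator $H\to Z_p$ yields $H\simeq Z_p$ once even-dimensional summands are absorbed, using that $Z_p$ is isomorphic to its subspaces of even codimension. You should delete the sentence in your third paragraph claiming that an odd-index $T$ would give $H\simeq X$, together with the conclusion ``hence $\operatorname{ind}(T)$ is odd'' drawn from it: the claim is false ($I_{|H}$ has index $-1$, yet $H\not\simeq X$ by hypothesis), and the conclusion would not follow from it anyway; your final paragraph already supplies the correct version of that step.
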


\begin{proof} By Lemma \ref{par}, the real operator $T$ would be Fredholm with even index. Therefore $H \oplus \R^{2m}$ and $Z_p \oplus \R^{2n}$ would be isomorphic for some integers $m,n$. Since $Z_p$ is isomorphic to its subspaces of even codimension, $Z_p$ would be isomorphic to $H$.
\end{proof}

We also have:

\begin{proposition}
    Assume $I$ is a complex structure on a space $X$
and $J$ a complex structure on its hyperplane $H$. Then there exist
$0<\lambda \leq 1$ such that $I_{|H}-\lambda J$ is infinitely singular,
and $0<\lambda \leq 1$ such that $J- \lambda I_{|H}$ is infinitely singular.
\end{proposition}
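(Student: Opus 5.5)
The plan is a continuity-of-index argument along the segment $\lambda \in [0,1]$. It compares the endpoint $\lambda=0$, where the operator has odd index, with the endpoint $\lambda=1$, where the operator is $\C$-linear and therefore has even index or index $-\infty$.

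For the first assertion, consider the path $T_\lambda = I_{|H} - \lambda\, i_{H,X}J \in L(H,X)$ for $\lambda \in [0,1]$, which is norm continuous in $\lambda$.

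At $\lambda=0$, $T_0 = I_{|H}$ is an isomorphism of $H$ onto $I(H)$, and $I(H)$ is a hyperplane of $X$ since $I$ is an automorphism of $X$. Hence $T_0$ is upper semi-Fredholm with $\operatorname{ind}(T_0) = 0 - 1 = -1$.

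At $\lambda = 1$, note that $-J$ is also a complex structure on $H$. Lemma \ref{par} applied to $I$ and $-J$ gives that $T_1 = I_{|H} - i_{H,X}J$ is $\C$-linear from $H^{-J}$ to $X^I$. Suppose $T_1$ is upper semi-Fredholm. Then $\ker T_1$ is a finite dimensional $\C$-subspace, so it has even real dimension. The range $T_1(H)$ is closed and $I$-invariant, so $X/T_1(H)$ inherits a complex structure and has even or infinite real dimension. Hence $\operatorname{ind}(T_1)$ is even or equals $-\infty$, and in either case $\operatorname{ind}(T_1) \neq -1$.

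Now suppose, for contradiction, that $T_\lambda$ is upper semi-Fredholm for every $\lambda \in (0,1]$; it also is at $\lambda=0$. The set of upper semi-Fredholm operators is open and the index is continuous (locally constant) on it, by \cite[Corollary 4.1]{Maurey}; equivalently, Proposition \ref{fredholm} applies, since small norm perturbations are $\varepsilon$-singular with small $\varepsilon$. So $\lambda \mapsto \operatorname{ind}(T_\lambda)$ is locally constant on the connected set $[0,1]$, hence constant. This forces $\operatorname{ind}(T_1) = -1$, contradicting the previous paragraph. Therefore some $\lambda \in (0,1]$ makes $T_\lambda$ not upper semi-Fredholm, i.e. infinitely singular. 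We get $\lambda > 0$ automatically, because $T_0$ is semi-Fredholm.

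The second assertion is identical, using the path $S_\lambda = i_{H,X}J - \lambda I_{|H}$. At $\lambda=0$, $S_0$ is an isomorphism of $H$ onto the hyperplane $H$ of $X$, so it has index $-1$. At $\lambda=1$, $S_1 = -T_1$ is again $\C$-linear from $H^{-J}$ to $X^I$, so the same parity argument applies.

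The only delicate point is the parity claim at $\lambda = 1$: the kernel and the range must be invariant under the respective complex structures. This follows directly from $\C$-linearity, since $T_1(-J) = I T_1$. The case of index $-\infty$ must also be included, which the argument above does.
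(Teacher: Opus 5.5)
Your proposal is correct and follows the paper's own argument: continuity of the semi-Fredholm index along $\lambda\in[0,1]$, comparing index $-1$ at $\lambda=0$ with the even index forced at $\lambda=1$ by $\C$-linearity of $I_{|H}-i_{H,X}J$ from $H^{-J}$ to $X^I$ (and likewise for the second path). Your version is slightly more careful than the paper's. You work with upper semi-Fredholm operators and allow index $-\infty$, so you conclude directly that $T_\lambda$ is \emph{infinitely singular}, which is what the statement claims, whereas the paper's proof only writes ``not Fredholm''.
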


\begin{proof} Since $I i_{HX} -i_{HX}J \in L(H,X)$ is $\C$-linear from $H^{-J}$ to $X^I$, if it is Fredholm then it has even index. On the other hand $I i_{HX}$ is Fredholm with index $-1$.
So by continuity of the Fredholm index, there must be some $0<\lambda \leq 1$ for which the map
$U i_{hX}-\lambda i_{HX}J$ is not Fredholm. The same reasoning applies with $i_{HX}J$ instead of $I i_{HX}$.
\end{proof}

\begin{theorem}\label{hypandspace} Let $I$ be a complex structure on a space $X$ and $J$ a complex structure on a hyperplane $H$ complemented by a projection $p_H$. Then $(p_HIi_{H,X}-J)^2$ is not $C$-inessential if $C>1/4$. In particular, $\|I_{|H}-J\|_S \geq 2$. 
\end{theorem}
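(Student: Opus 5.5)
Let $I' = p_H I i_{H,X}\in L(H)$. The plan is to mimic the proof of Proposition \ref{hum} on $H$, and to derive a contradiction from the parity of Fredholm indices, as in Lemma \ref{par}.

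First I would compute $I'^2$. Since $i_{H,X}p_H = \Id_X - q$ with $q$ a rank one projection, we get
$$I'^2 = p_H I (\Id_X - q) I i_{H,X} = -\Id_H - p_H I q I i_{H,X}.$$
So $I'^2 = -\Id_H + F$ with $F$ of rank at most one. Then
$$(I'+J)^2 + (I'-J)^2 = 2I'^2 + 2J^2 = -4\Id_H + 2F,$$
and therefore
$$-\tfrac14 (I'+J)^2 = \Id_H + \tfrac14 (I'-J)^2 - \tfrac12 F.$$
Assume $(I'-J)^2$ is $C$-inessential with $C>1/4$. Taking $T=-\frac14\Id$, which has norm $1/4<C$, gives that $\Id_H + \tfrac14(I'-J)^2$ is Fredholm. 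A finite rank perturbation keeps it Fredholm with the same index, so $(I'+J)^2$ is Fredholm. Hence $I'+J$ is Fredholm as an operator on $H$, with index $0$ by continuity along $t\mapsto \Id+t\cdot(\ldots)$ from the identity.

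Next I would transfer this to the operator of Lemma \ref{par}, $T = I i_{H,X} + i_{H,X}J \in L(H,X)$, which is $\C$-linear from $H^J$ to $X^I$. Writing $T = i_{H,X}(I'+J) + qIi_{H,X}$ shows that $T$ differs from $i_{H,X}(I'+J)$ by a rank one operator. Since $i_{H,X}$ is Fredholm of index $-1$, $T$ is Fredholm of index $0+(-1)=-1$. On the other hand, a $\C$-linear Fredholm operator has even real index, being twice its complex index. This contradiction proves the first assertion.

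For the second assertion, assume $\|I_{|H}-J\|_S<2$; I read $I_{|H}$ here as $I'$, since the two differ by a rank one operator and $\|\cdot\|_S$ ignores strictly singular, hence finite rank, perturbations (Proposition \ref{seminorm}(b)). By Proposition \ref{seminorm}(d), $\|(I'-J)^2\|_S \le \|I'-J\|_S^2 <4$, so by Proposition \ref{seminorm}(a), $\rho_s((I'-J)^2)<4$. Choose $\varepsilon<4$ such that $(I'-J)^2$ is $\varepsilon$-singular. Proposition \ref{propo} then makes it $1/\varepsilon$-strongly inessential, hence $1/\varepsilon$-inessential, and $1/\varepsilon>1/4$. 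This contradicts the first part.

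The main obstacle is the bookkeeping of finite rank corrections and indices: checking that $I'^2+\Id_H$ really is finite rank, and getting the index of $T$ exactly odd. The key step there is the decomposition $T=i_{H,X}(I'+J)+qIi_{H,X}$ combined with the fact that $I'+J$ has index $0$.
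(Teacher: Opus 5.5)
Your proof is correct and is essentially the paper's argument. It uses the same identity $(I'+J)^2+(I'-J)^2=-4\Id_H+(\text{rank}\le 1)$, the same use of $C$-inessentiality with $T=-\frac14\Id_H$ to make $I'+J$ Fredholm of index $0$, and the same parity contradiction from Lemma~\ref{par}, followed by Proposition~\ref{propo} and submultiplicativity of $\|\cdot\|_S$ for $\|I_{|H}-J\|_S\ge 2$. Your explicit decomposition $Ii_{H,X}+i_{H,X}J=i_{H,X}(I'+J)+qIi_{H,X}$ and the continuity argument for the index fill in steps the paper leaves implicit, and your constants and signs ($C>1/4$, $\Id_H+\frac14(I'-J)^2$) are the right ones where the paper's written proof has small typos.
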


\begin{proof} If $p_H$ denotes a projection from $X$ onto $H$, and $r_H$ the rank $1$ projection $r_H=\Id-p_H$, a computation shows that
$$(p_H I i_{H,X} +J)^2+(p_H I i_{H,X} -J)^2=2p_H I p_H I i_{H,X}-2\Id_H=-4\Id_H+F,$$
onde $F:=-2p_HIr_HIi_{H,X}$ is an operator of rank at most $1$ on $H$. Note also that by Lemma \ref{par}, the operator $p_H Ii_{H,X}+J=p_H(I_{|H}+i_{H,X}J)$ from $H$ into $H$ cannot be Fredholm with even index.

Now if $(p_HIi_{H,X}-J)^2$ were $C$-inessential for $C>4$, then $\Id_H-\frac{1}{4}(p_HIi_{H,X}-J)^2$ would be Fredholm with index $0$ on $H$, therefore by the above equality $(p_H Ii_{H,X}+J)^2$ would be Fredholm with index $0$ and $p_H Ii_{H,X}+J$ would be Fredholm with index $0$, a contradiction.

It follows that $\rho_S((p_H I i_{H,X} -J)^2) \geq 4$, and in particular, since $\|p_H\|_S=1$ that 
$\rho_S(I_{|H}-i_{H,X}J) \|I_{|H}-i_{H,X}J\|_S \geq 4$ and that $\|I_{|H}-J\|_S \geq 2$.
\end{proof}

\begin{definition}
Let $X$ be a Banach space, $H$ be a given hyperplane of $X$ and $X=H \oplus \R e$ a fixed decomposition.
Let ${\mathcal I}(X)$
be the set of complex structures on $X$ and ${\mathcal J}(X)$ the set of operators on $X$ of the matricial form
$\begin{pmatrix} J & 0 \\ 0 & 1 \end{pmatrix}$, for $J$ a complex structure on $H$, and let $Z(X)$ be the set of operators $T$ on $X$ such that $T^2+Id$ is strictly singular.
\end{definition}

Let $\tilde{\mathcal I}(X)$,  $\tilde{\mathcal J}(X)$, and $\tilde{Z}(X)$ be the sets of elements of $L(X)/S(X)$ associated to ${\mathcal I}(X)$,  ${\mathcal J}(X)$, and $\tilde{Z}(X)$ respectively, and note that $\tilde{Z}(X)$
is the set of elements of square $-1$ in $L(X)/S(X)$.
By Ferenczi-Galego \cite{FGEven} Propositions 8 and 10, 
$\tilde{Z}(X)$ is the disjoint union of $\tilde{\mathcal I}(X)$ and $\tilde{\mathcal J}(X)$.
One can prove:

\begin{proposition} \label{distance} Let $X$ be an infinite dimensional Banach space. Then the set $\tilde{Z}(X)$ is closed in $L(X)/S(X)$ for $\|.\|_S$. Furthermore, if $H$ is a given hyperplane of $X$ as above, then $$d(\tilde{\mathcal I}(X),\tilde{\mathcal J}(X)):=\inf_{a \in \tilde{\mathcal I}(X), b \in \tilde{\mathcal J}(X)}\|a-b\|_S \geq 2$$ and therefore the sets
    $\tilde{\mathcal I}(X)$ and $\tilde{\mathcal J}(X)$ are clopen in $\tilde{Z}(X)$ for $\|.\|_S$.
\end{proposition}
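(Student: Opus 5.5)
The plan has three parts: prove closedness of $\tilde Z(X)$ from the algebra-norm property of $\|.\|_S$, prove the distance bound by reducing to Theorem \ref{hypandspace}, and deduce clopenness from the first two.

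\emph{Closedness.} Take $a_n\in\tilde Z(X)$ with $\|a_n-a\|_S\to 0$ in $L(X)/S(X)$. By Proposition \ref{seminorm}(d), $\|.\|_S$ is an algebra norm on the quotient. Then
$$\|a^2+1\|_S=\|a^2-a_n^2\|_S\le \|a-a_n\|_S\,(\|a\|_S+\|a_n\|_S)\to 0.$$
Since $\|.\|_S$ is a norm on the quotient, $a^2=-1$, so $a\in\tilde Z(X)$. No completeness is needed, because we only test membership through the algebraic identity $a^2=-1$.

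\emph{Distance bound.} Let $a$ be the class of a complex structure $I$ on $X$, and $b$ the class of $\begin{pmatrix} J & 0\\ 0&1\end{pmatrix}$ with $J$ a complex structure on $H$. Choose representatives, and write $p_H$ for the projection onto $H$ along $\R e$; it has $\|p_H\|_S=1$. Let $D=I-\begin{pmatrix} J&0\\0&1\end{pmatrix}$. Restricting $D$ to $H$ and composing with $p_H$ gives $p_HIi_{H,X}-J$. The passage from $X$ to $H$ only changes $\|.\|_S$ by finite-rank perturbations, and restriction and compression to a finite codimensional subspace do not increase it (subspaces of $H$ are subspaces of $X$). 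Hence
$$\|p_HIi_{H,X}-J\|_S\le \|p_H\|_S\,\|D\|_S=\|D\|_S.$$
Theorem \ref{hypandspace} gives $\|p_HIi_{H,X}-J\|_S\ge 2$, and Proposition \ref{seminorm}(b) shows the left side depends only on the classes modulo $S(X)$. Therefore $\|a-b\|_S\ge 2$ for all such $a,b$.

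One point needs checking: $\tilde{\mathcal J}(X)$ is defined through the fixed decomposition $X=H\oplus\R e$. This is harmless, since the theorem applies to any complex structure $J$ on this $H$ and to any complex structure $I$ on $X$, which is all we use.

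\emph{Clopenness.} By Ferenczi--Galego, $\tilde Z(X)=\tilde{\mathcal I}(X)\sqcup\tilde{\mathcal J}(X)$. The distance bound makes each piece relatively open: a $\|.\|_S$-ball of radius $1$ about a point of one piece cannot meet the other. Each piece is then also relatively closed, as the complement of the other. If one wants them closed in $L(X)/S(X)$ itself, this follows from the closedness of $\tilde Z(X)$ together with the separation.

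The main obstacle I expect is the bookkeeping in the reduction to Theorem \ref{hypandspace}: verifying that compression by $p_H$ and restriction to $H$ are contractive for $\|.\|_S$ modulo strictly singular (indeed finite-rank) corrections, so that the theorem's lower bound transfers to the difference of classes in $L(X)/S(X)$.
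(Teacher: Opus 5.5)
Your proposal is correct and follows the paper's proof: closedness of $\tilde Z(X)$ comes from submultiplicativity of $\|.\|_S$ on the quotient. The bound $d(\tilde{\mathcal I}(X),\tilde{\mathcal J}(X))\geq 2$ is a direct application of Theorem \ref{hypandspace}; your observation that $D_{|H}=I_{|H}-i_{H,X}J$ and that restriction and compression by $p_H$ do not increase $\|.\|_S$ is exactly the bookkeeping the paper leaves implicit.
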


\begin{proof} If $J_n \in Z(X)$ is a sequence of operators on $X$ tending to some $J$ for $\|.\|_S$,
then 
since $\|.\|_S$ is a algebra norm,
$J_n^2$ tends to $J^2$ for $\|.\|_S$.
Therefore $-\tilde{Id}=\tilde{J}^2$ and therefore $J^2+Id$ is strictly singular, which means that $J$ belongs to $Z(X)$. The assertion  $d(\tilde{\mathcal I}(X),\tilde{\mathcal J}(X)) \geq 2$ is Proposition \ref{hypandspace}. 
\end{proof}

\section{Quantified versions of singularity for quasi-linear maps} 

In this section, constants are relative to  the original quasi-norm associated with a quasi-linear map between Banach spaces. If one wishes to replace the quasi-norm with an equivalent norm, the estimates remain of course valid up to multiplicative constants.

\begin{proposition}[$\varepsilon$-singularity for $\Omega$]\label{Omegaepsilonsingular} Let $\Omega: Y \to X$ be a quasi-linear map and let $$ 0 \xlongrightarrow{} X \xlongrightarrow{i} X \oplus_\Omega Y \xlongrightarrow{q} Y\xlongrightarrow{} 0$$ be the corresponding twisted sum. The following are equivalent 
 \begin{itemize}
  \item[(a)] the quotient map $q$ is $\varepsilon$-singular;
  \item[(b)] there is no infinite dimensional subspace  $Z'\subseteq X \oplus_\Omega Y$ forming topological direct sum $i(X)\oplus Z'$ in $X \oplus_\Omega Y$ for which $Z'$ is $\varepsilon^{-1}$-complemented;
  \item[(c)] there is no infinite dimensional subspace $Y' \subseteq Y$ and linear map $L: Y'\to X$  such that $(\Omega-L)_{|Y'}$ has norm at most $\varepsilon^{-1}-1$;
  \item[(d)] for any infinite dimensional subspace  $Z'\subseteq X \oplus_\Omega Y$, $d(i(X),S_{Z'})<\varepsilon$.
  
 \end{itemize}
 When (a)(b)(c)(d) hold  we shall say that $\Omega$ is $\varepsilon$-singular, or that
 $X \oplus_\Omega Y$ is $\varepsilon$-singular.
\end{proposition}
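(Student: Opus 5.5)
We prove the cycle of equivalences (a)$\Leftrightarrow$(d), (d)$\Leftrightarrow$(b), (b)$\Leftrightarrow$(c). Throughout we work with the quasi-norm $\|(x,y)\|_\Omega=\|x-\Omega y\|+\|y\|$, in which $i$ and $q$ are isometric embedding and quotient map respectively. The constants are sensitive to this choice, so we will only ever use this exact quasi-norm and not pass to an equivalent norm.

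\textbf{(a)$\Leftrightarrow$(d).} The key identity is that for $(x,y)\in X\oplus_\Omega Y$ the distance to $i(X)$ is attained and equals the norm of the quotient:
\[
d((x,y),i(X))=\inf_{x'}\big(\|x-x'-\Omega y\|+\|y\|\big)=\|y\|=\|q(x,y)\|,
\]
where the infimum is attained at $x'=x-\Omega y$. Hence, for $z$ in the unit sphere $S_{Z'}$, the statement $\|qz\|\le\varepsilon$ is the same as $d(z,i(X))\le\varepsilon$. Taking the infimum over $z\in S_{Z'}$ gives exactly the equivalence of (a) and (d). The only subtlety is strict versus non-strict inequality; we resolve it by reading ``$\varepsilon$-singular'' consistently with the infimum convention, adjusting by an arbitrary $\delta$ where needed.

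\textbf{(d)$\Leftrightarrow$(b).} Fix an infinite dimensional $Z'$ with $Z'\cap i(X)=0$ and $i(X)+Z'$ closed. The projection $P$ onto $Z'$ along $i(X)$ satisfies $\|P\|=\sup_{z\in S_{Z'}}\|z\|/d(z,i(X))=1/\inf_{z\in S_{Z'}}d(z,i(X))$. We show each direction by contraposition.

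For (b)$\Rightarrow$(d), suppose (d) fails for some $Z'$, i.e.\ $d(i(X),S_{Z'})\ge\varepsilon$. Then $q_{|Z'}$ is an isomorphism, so $i(X)+Z'=q^{-1}(q(Z'))$ is closed. Moreover the projection $P$ of $i(X)\oplus Z'$ onto $Z'$ has norm at most $\varepsilon^{-1}$, so $Z'$ is $\varepsilon^{-1}$-complemented there, contradicting (b).

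For (d)$\Rightarrow$(b), suppose (b) fails, so some $Z'$ is $\varepsilon^{-1}$-complemented in a topological direct sum $i(X)\oplus Z'$. The formula above then gives $d(i(X),S_{Z'})\ge\varepsilon$, contradicting (d).

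\textbf{(b)$\Leftrightarrow$(c).} This is the translation between complements of $i(X)$ and graphs of linear maps.

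For (c)$\Rightarrow$(b) by contraposition, suppose $Z'$ is a complement of $i(X)$ in $q^{-1}(Y')$ with $Y'=q(Z')$. Then $Z'=\{(Ly,y):y\in Y'\}$ for some linear $L:Y'\to X$, and
\[
\|(Ly,y)\|_\Omega=\|(L-\Omega)y\|+\|y\|.
\]
The norm of the projection onto $Z'$ is $\sup_{y}\|(Ly,y)\|_\Omega/\|y\|=1+\|(\Omega-L)_{|Y'}\|$. So $Z'$ being $\varepsilon^{-1}$-complemented is equivalent to $\|(\Omega-L)_{|Y'}\|\le\varepsilon^{-1}-1$, contradicting (c).

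For (b)$\Rightarrow$(c), the same computation run in reverse turns a pair $(Y',L)$ into the graph subspace $Z'=\{(Ly,y):y\in Y'\}$. Here $Y'$ is a closed infinite dimensional subspace; this is implicit in (c) and we will make it explicit.

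\textbf{Main obstacle.} The delicate point is the closedness of the graph of $L$ when $L$ is only linear and possibly unbounded. We handle it as follows. If $\|(\Omega-L)_{|Y'}\|$ is finite, then $\|(Ly,y)\|_\Omega$ is equivalent to $\|y\|$ on the graph. So the graph is complete, hence closed, and it is isomorphic to $Y'$ via $q$. With this settled, the remaining steps are bookkeeping with the quasi-norm.
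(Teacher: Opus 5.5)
Your proof is correct and is essentially the paper's argument: both rest on the identity $\|q(z)\|=d(z,i(X))$ and on the graph computation $\|(Ly,y)\|_\Omega=\|(L-\Omega)y\|+\|y\|$. The only difference is that you route (a)$\Leftrightarrow$(b) through (d), using the formula $\|P\|=1/\inf_{z\in S_{Z'}}d(z,i(X))$, and you are more careful than the paper about the closedness of $i(X)+Z'$ and of the graph of $L$; the paper leaves the strict versus non-strict boundary issue you flag just as loose.
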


\begin{proof}

We first prove  $(a) \Leftrightarrow (b)$: if $Z'$ witnesses that $q$ is not $\varepsilon$-singular, then
for all $z\in Z'$ and $x\in X$, $ \|i(x)+z\| \geq \|q(i(x)+z)\| \geq \varepsilon \|z\|$, so $Z'$ is $\varepsilon^{-1}$-complemented in $i(X) \oplus Z'$.
If $Z'$ is  $\varepsilon^{-1}$-complemented in $i(X) \oplus Z'$, then $\|q(z)\| =  d(z,i(X))= \|z+i(x_0)\| \geq \varepsilon  \|z\|$ for every $z\in Z'$.

Now we prove  $(b) \Leftrightarrow (c)$: if $L$ witnesses that (c) is false, then the subspace $\{(Ly,y), y \in Y'\}$ is $\varepsilon^{-1}$-complemented in its direct sum with $i(X)$. In fact, $\|(Ly,y)\|_\Omega = \|Ly - \Omega y\| + \|y\| \leq \varepsilon^{-1}\|y\| \leq \varepsilon^{-1} \|(x+Ly,y)\|_\Omega$ for every $x\in X$. 

if $Z'$ witnesses that (b) is false, then we may assume that there is some $Y' \subseteq Y$ and some linear $L$ such that
$Z'=\{(Ly,y), y \in Y'\}$.
This means an inequality of the form
$$\|Ly-\Omega y\|+\|y\| \leq \varepsilon^{-1} (\|x+Ly-\Omega y\|+\|y\|)$$
and so $\|\Omega-L\|$ is $\varepsilon^{-1}-1$ bounded on $Y'$.

Finally, we prove $(b) \Leftrightarrow (d)$: observe that $Z'$ is $\varepsilon^{-1}$-complemented on $i(X)\oplus Z'$ if and only if $\|z\| \leq  \varepsilon^{-1}  \|x + z\|$ for every $x\in i(X)$ and $z\in Z'$, which is equivalent to $d(i(X), S_{Z'})\geq \varepsilon$. 
\end{proof}

\begin{proposition} 
 Consider an $\varepsilon$-singular twisted sum
 $X \oplus_\Omega Y$, and let $Z$ be a Banach space. Let $t: X \rightarrow Z$
and let $T$ be an extension of $t$ to $X \oplus_\Omega Y$. Then 
$\rho_s(T) \leq \|t\|+2\varepsilon\|T\|$

\end{proposition}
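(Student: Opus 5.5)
To show $\rho_s(T)\le \|t\|+2\varepsilon\|T\|$, I fix an arbitrary infinite dimensional subspace $W\subseteq X\oplus_\Omega Y$ and a number $\delta>0$. The goal is a normalized vector $w\in W$ with $\|Tw\|\le \|t\|+2\varepsilon\|T\|+\delta$. The natural source of such a vector is characterization (d) of Proposition \ref{Omegaepsilonsingular}: since the twisted sum is $\varepsilon$-singular, $d(i(X),S_W)<\varepsilon$. So there are $w\in S_W$ and $x\in X$ with $\|w-i(x)\|<\varepsilon$. The idea is to compare $Tw$ with $T(i(x))=t(x)$, which is controlled by $\|t\|$.

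The estimate then goes as follows. Because $T$ extends $t$, we have $Ti=t$. Hence
$$\|Tw\|\le \|T(w-i(x))\|+\|t(x)\|\le \varepsilon\|T\|+\|t\|\,\|x\|.$$
It remains to bound $\|x\|$. Since $i$ is an isometric embedding, $\|x\|=\|i(x)\|\le \|w\|+\|w-i(x)\|<1+\varepsilon$. This gives
$$\|Tw\|\le \|t\|+\varepsilon(\|t\|+\|T\|)\le \|t\|+2\varepsilon\|T\|,$$
where the last step uses $\|t\|=\|Ti\|\le\|T\|$. As $W$ is arbitrary, $T$ is $(\|t\|+2\varepsilon\|T\|)$-singular, up to arbitrarily small slack coming from the strict inequality in (d). Taking the infimum in the definition of $\rho_s$ gives the claim.

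The main point to check carefully is the normalization. First, $\|\cdot\|_\Omega$ is only a quasi-norm, so the triangle inequality used above may carry a constant. I would handle this by working, as the section's convention allows, with the original quasi-norm and noting that the two triangle inequalities used are of the benign form in which the term $\|y-\Omega z\|+\|z\|$ splits additively. Alternatively, one can accept a multiplicative constant absorbed into the convention that estimates hold "up to constants" after renorming. Second, one must make sure that (d) gives a vector of $S_W$ close to $i(X)$, rather than merely the distance in the reverse direction. That is exactly what (d) states, so it suffices.
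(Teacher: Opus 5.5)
Your proof is correct and follows the paper's argument. Both use characterization (d) of Proposition \ref{Omegaepsilonsingular} to find $w\in S_W$ close to $i(X)$ and then apply the triangle inequality in $Z$; the only difference is that the paper normalizes $x$ (obtaining $\|w-x\|\le 2\varepsilon$ and $\|Tx\|\le\|t\|$), whereas you keep $\|x\|<1+\varepsilon$ and absorb the excess via $\|t\|\le\|T\|$. Your quasi-norm worry is unnecessary: $\|Tw\|\le\|T(w-i(x))\|+\|t(x)\|$ is a triangle inequality in the Banach space $Z$, and $\|i(x)\|_\Omega\le\|w\|_\Omega+\|w-i(x)\|_\Omega$ holds exactly because the two vectors differ by an element of $i(X)$, so there is no constant to absorb (absorbing one would in any case not give the stated bound).
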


\begin{proof}

Let $\delta$ be such that there exists $W \subseteq X \oplus_\Omega Y$ such that
$\|Tw\|
 \geq \delta \|w\|$ for all $w \in W$. 
 Then by Proposition \ref{Omegaepsilonsingular} there exist normalized vectors $x \in X, w \in W$ such that $d(x,w) \leq 2\varepsilon$.
 It follows that $$\delta \leq \|T(w)\| \leq \|T(x)\|-\|T(w-x)\| \leq \|t\|+ 2\|T\| \varepsilon.$$
\end{proof}

\begin{proposition} \label{extconstante}

Let  $Z=X \oplus_\Omega Y$ be  an $\varepsilon$-singular twisted sum.
 Let $j$ be a complex structure on $X$ which
 extends to a complex structure on $Z$ of norm at most $K$. 
 Let $H$ be a hyperplane of $Z$ containing $X$. Then any  complex structure on $H$ extending $j$ must have norm at least 
  $2\varepsilon^{-1}-K$.
\end{proposition}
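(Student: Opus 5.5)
Let $I$ be a complex structure on $Z$ with $\|I\|\le K$ and $I_{|X}=j$, and let $J$ be a complex structure on $H$ with $J_{|X}=j$. The plan is to compare them through Theorem \ref{hypandspace}, which gives $\|I_{|H}-J\|_S\ge 2$. The complementary upper bound comes from the fact that $I_{|H}-J$ vanishes on $X$ and the twisted sum is $\varepsilon$-singular. We will show $\|I_{|H}-J\|_S\le \varepsilon(\|I\|+\|J\|)$, up to the harmless factor coming from $H$ being a hyperplane. Combined with the theorem this gives $2\le \varepsilon(K+\|J\|)$, i.e. $\|J\|\ge 2\varepsilon^{-1}-K$.

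First, $H$ contains $X$ and has codimension one in $Z$, so $H=X\oplus_\Omega Y_0$ for a hyperplane $Y_0=q(H)$ of $Y$. Then $H$ is again an $\varepsilon$-singular twisted sum: condition (d) of Proposition \ref{Omegaepsilonsingular} passes to subspaces containing $X$, since every infinite dimensional subspace of $H$ is one of $Z$. Put $D=I_{|H}-i_{H,Z}J\in L(H,Z)$. Then $D_{|X}=j-j=0$ and $\|D\|\le \|I\|+\|J\|\le K+\|J\|$.

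Next, estimate $\|D\|_S$ with an argument modeled on the proposition bounding $\rho_s(T)\le\|t\|+2\varepsilon\|T\|$, here with $t=0$. Since we need the strong version, we work on subspaces rather than single vectors. Given an infinite dimensional $W\subseteq H$, use (d) repeatedly: every infinite dimensional subspace of $W$ contains norm-one vectors at distance $<\varepsilon$ from $X$. A gliding-hump or basic-sequence argument then produces a normalized basic sequence $(w_n)$ in $W$ with $d(w_n,X)<\varepsilon$. Choosing $x_n\in X$ with $\|w_n-x_n\|<\varepsilon$, we get $\|Dw_n\|=\|D(w_n-x_n)\|\le \varepsilon\|D\|$.

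The main obstacle is upgrading this from individual vectors to the whole span $[w_n]$, that is, from $\rho_s$ to $\|\cdot\|_S$. One route is to note that Theorem \ref{hypandspace} actually yields the stronger $\rho_s(D)\,\|D\|_S\ge 4$, together with $\rho_s((p_HDi)^2)\ge4$. The vector estimate gives $\rho_s(D)\le \varepsilon\|D\|$. Bounding $\|D\|_S\le\|D\|$ then gives $\varepsilon\|D\|^2\ge4$, which yields $\|J\|\ge 2\varepsilon^{-1/2}-K$. That is weaker than claimed, so for the stated bound I would aim instead for $\|D\|_S\le \varepsilon\|D\|$ directly.

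To get it, pass to a subsequence where the $x_n$ also form a block-like sequence in $X$ with $w_n-x_n$ small. Apply (d) inside $\pi^{-1}$ of block subspaces, so that perturbations sum up. Alternatively, use $D_{|X}=0$ to factor $D$ through the quotient: $D=\tilde D q_{|H}$ with $\|\tilde D\|\le\|D\|$. Then $\|D\|_S\le\|D\|\,\|q_{|H}\|_S$, and we need $\|q\|_S\le\varepsilon$. This reduces to checking that $\varepsilon$-singularity of $q$ gives $\varepsilon$-strong singularity in this setting. For twisted Hilbert sums I expect that a normalized sequence with small images under $q$ can be refined to a subspace on which $q$ has norm $\le\varepsilon+\delta$.

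Granting $\|D\|_S\le\varepsilon(K+\|J\|)$, Theorem \ref{hypandspace} (with $\|p_H\|_S=1$) finishes the proof. If the strong estimate fails in general, I would interpret $\varepsilon$-singularity in the statement via $\|q\|_S\le\varepsilon$.
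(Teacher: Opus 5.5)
\textbf{The gap.} Your proof hinges on $\|I_{|H}-J\|_S\le \varepsilon\|I_{|H}-J\|$. Your factorization $D=\tilde D\,q_{|H}$ correctly reduces this to $\|q\|_S\le\varepsilon$, that is, to $q$ being $\varepsilon$-\emph{strongly} singular. The hypothesis (Proposition \ref{Omegaepsilonsingular}) only gives that every infinite dimensional subspace contains \emph{one} normalized vector with $\|qw\|\le\varepsilon$. The paper itself remarks that strongly singular implies singular but there does not seem to be a converse.

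The refinement you hope for does not follow from anything you have. A basic sequence $(w_n)$ with $\|qw_n\|\le\varepsilon$ gives no control of $\|q(\sum_n a_nw_n)\|=\|\sum_n a_n qw_n\|$ relative to $\|\sum_n a_nw_n\|$. Moreover, the proposition is about arbitrary twisted sums, not twisted Hilbert spaces. Replacing the hypothesis by $\|q\|_S\le\varepsilon$ changes the statement. So what your proposal actually proves is your fallback bound $\|J\|\ge 2\varepsilon^{-1/2}-K$. That is enough for the Castillo--Cuellar--Ferenczi--Moreno corollary (let $\varepsilon\to 0$), but not for $2\varepsilon^{-1}-K$.

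\textbf{The paper's route.} The paper pairs the two ingredients the other way round, taking the lower bound in vector form. For $\alpha<2$ it takes a subspace $W\subseteq H$ with $\|(I-J)w\|\ge\alpha\|w\|$ for all $w\in W$. Since $I-J$ vanishes on $X$, on $W$ one has
$\alpha\|w\|\le\|I_{|H}-J\|\inf_{x\in X}\|w+x\|=\|I_{|H}-J\|\,\|qw\|$.
A single $w\in S_W$ with $\|qw\|\le\varepsilon$, which is exactly what $\varepsilon$-singularity provides, then gives $\alpha\le\varepsilon\|I_{|H}-J\|\le\varepsilon(K+\|J\|)$. No passage to subspaces for $q$ is needed.

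The price is that one needs $\rho_s(I_{|H}-J)\ge 2$ ("not $\alpha$-singular for any $\alpha<2$"), which the paper simply attributes to Theorem \ref{hypandspace}. This is precisely the point you flagged. The theorem states $\|I_{|H}-J\|_S\ge 2$, and its proof yields $\rho_s(D)\|D\|_S\ge 4$; neither implies $\rho_s(D)\ge 2$. The paper gives no further argument for that strengthening. So the constant $2\varepsilon^{-1}$ needs one of two extra inputs: $\rho_s(I_{|H}-J)\ge 2$ (the paper's route) or $\|q\|_S\le\varepsilon$ (yours). Neither is established by the results in the paper.
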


 \begin{proof}
  
 Denote by $\pi: Z \rightarrow Y$ the quotient map, by $I$ the extension of $j$ to $Z$, of norm $K$,  and 
 by $J$ an extension of $j$ to $H$.
  By Proposition \ref{hypandspace}, 
  $I_{|H}-J$ is not $\alpha$-singular, for any
  $\alpha<2$, so let
  $W$ be a subspace of $H$ such that
  $\|(I-J)w\| \geq \alpha \|w\|$ for all $w \in W$.
Since $(I-J)_{|X}=0$ we deduce for all $w \in W$ and $x \in X$
$$\alpha \|w\| \leq \|(I-J)(w+x)\| \leq \|I_{|H}-J\| \|w+x\|,$$
therefore $\alpha \|w\| \leq \|I_{|H}-J\| \|\pi(w)\|$.
Since $\pi$ is $\varepsilon$-singular,  pick $w \in S_W$ with $\pi(w)$ of norm at most $\varepsilon$, obtaining 
$\alpha \leq \|I_{|H}-J\| \varepsilon$.
It follows that
$$2 \leq \varepsilon\|I_{|H}-J\| $$
and so $\|J| \geq 2\varepsilon^{-1}-\|I\|=2\varepsilon^{-1}-K$.\end{proof}

Observe that it always holds that $\rho_s(\pi) \leq \|\pi\|=1$, giving the estimate $\|J\| \geq 2-K$ in the above proof.
When $K=1$, i.e. $I$ is isometric, then the estimate becomes $\|J\| \geq 1$, which is sharp.

As a corollary of Proposition \ref{extconstante}, we recover one of the main results of \cite{CCFM}.

 \begin{corollary}[Castillo-Cuellar-Ferenczi-Moreno 2017]
 Consider an exact sequence
 $0 \rightarrow X \rightarrow Z \rightarrow Y \rightarrow 0$ for which the quotient map $\pi$ is strictly singular.
 Let $j$ be a complex structure on $X$ which
 extends to a complex structure on $Z$.  Then $j$ does not extend to a complex structure on an hyperplane of $Z$ containing $X$.
 \end{corollary}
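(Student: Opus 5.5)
The corollary is the limiting case $\varepsilon\to 0$ of Proposition \ref{extconstante}, and I would argue by contradiction. Suppose $I$ is a complex structure on $Z$ extending $j$, and $J$ is a complex structure on a hyperplane $H\supseteq X$ of $Z$ extending $j$. Set $K=\|I\|<\infty$.

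The first step is to view the sequence as a twisted sum. By Kalton--Peck, the exact sequence is equivalent to $0\to X\to X\oplus_\Omega Y\to Y\to 0$ for some quasi-linear $\Omega$. The equivalence is an isomorphism fixing $X$ and commuting with the quotient maps, so it transports $I$, $H$ and $J$ to the twisted sum. The norms change only by a fixed multiplicative constant $M$, which depends on the isomorphism but not on $\varepsilon$. Since $\pi$ is strictly singular, it is $\varepsilon$-singular for every $\varepsilon>0$, and by Proposition \ref{Omegaepsilonsingular} this means $X\oplus_\Omega Y$ is $\varepsilon$-singular for every $\varepsilon>0$. Strict singularity is preserved under composing with isomorphisms, so $\pi$ remains strictly singular after the transport.

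The second step is to apply Proposition \ref{extconstante} for each $\varepsilon>0$. This gives $\|J\|\ge M^{-1}(2\varepsilon^{-1}-MK)$ for the transported structures. Letting $\varepsilon\to 0$ forces $\|J\|=\infty$, which is impossible for a bounded operator.

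If one prefers to avoid the transport constants altogether, the proof of Proposition \ref{extconstante} can be rerun directly in $Z$. Theorem \ref{hypandspace} supplies a subspace $W\subseteq H$ on which $\|(I-J)w\|\ge\alpha\|w\|$ for some $\alpha$ close to $2$. Since $(I-J)|_X=0$, we get $\alpha\|w\|\le\|I_{|H}-J\|\,\|\pi(w)\|$, where the norm on $Y$ is taken to be the quotient norm. Strict singularity of $\pi$ gives unit vectors $w\in W$ with $\|\pi(w)\|$ arbitrarily small, so $\alpha\le\|I_{|H}-J\|\cdot 0=0$, a contradiction.

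The only point needing care is the application of Theorem \ref{hypandspace}, which requires $H$ to be complemented by a projection $p_H$. This holds automatically because $H$ is a closed hyperplane.
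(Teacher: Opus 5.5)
Your proof is correct and is the paper's argument: the corollary is Proposition \ref{extconstante} applied for every $\varepsilon>0$ and letting $\varepsilon\to 0$, and your transport to the model $X\oplus_\Omega Y$ and your direct rerun in $Z$ only make explicit what the paper leaves implicit. In the rerun you do not need $\alpha$ close to $2$: what follows at once from $\|I_{|H}-J\|_S\ge 2>0$ is that $I_{|H}-J$ is not strictly singular, hence bounded below by some $\alpha>0$ on an infinite dimensional $W$, and that is all your inequality $\alpha\|w\|\le\|I_{|H}-J\|\,\|\pi(w)\|$ requires.
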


 \begin{proof} A complex structure $J$ on a hyperplane $H$ extending $j$ would have norm larger than $2\varepsilon^{-1}-K$ for all $\varepsilon>0$. \end{proof}

\

Going now to full singularity,
it is well-known that if a twisted sum
$X \oplus_\Omega Y$ is singular, and $T: X \oplus_\Omega Y \rightarrow Z$, then $T$ is strictly singular if and only if $T_{|X}$ is strictly singular (the first explicit occurrence seems to be in \cite{Fhdn}, Lemma 2;  it also follows in a more homological language from \cite{CabelloCastillo} Lemma 9.1.5). This can be easily improved to the following quantitative estimates:

 \begin{proposition}\label{restriction} Assume a twisted sum
$X \oplus_\Omega Y$ is singular, and let $T: X \oplus_\Omega Y \rightarrow Z$.
Then $\rho_s(T)=\rho_s(T_{|X})$ and  $\|T\|_S=\|T_{|X}\|_S$.
\end{proposition}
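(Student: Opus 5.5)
One inequality in each equality is immediate. Restricting $T$ to $X$ only shrinks the family of infinite dimensional subspaces over which the infimum and supremum in the definitions range. Hence $\rho_s(T_{|X})\le \rho_s(T)$ and $\|T_{|X}\|_S\le \|T\|_S$, where $X$ is identified with $i(X)$.

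For the reverse inequalities the plan is to use singularity of $\Omega$ through Proposition \ref{Omegaepsilonsingular}(d). Since $q$ is strictly singular, $q$ is $\delta$-singular for every $\delta>0$. So every infinite dimensional $W\subseteq X\oplus_\Omega Y$ contains a normalized $w$ with $d(w,i(X))<\delta$. I would upgrade this to a subspace statement: for every such $W$ and every sequence $\delta_n\downarrow 0$ there exist normalized $w_n\in W$ and $x_n\in X$ with $\|w_n-x_n\|<\delta_n$. Moreover the $w_n$ can be taken to form a basic sequence, and then the $x_n$ form a basic sequence equivalent to $(w_n)$. The construction is the standard one: apply strict singularity of $q$ inside finite codimensional subspaces $W_n\subseteq W$, chosen by Mazur's technique so that $(w_n)$ is basic with constant at most $2$. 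Then take $\sum\delta_n$ small relative to the basis constant, so that the principle of small perturbations applies. The resulting map $U:\overline{\mathrm{span}}(w_n)\to\overline{\mathrm{span}}(x_n)$, $w_n\mapsto x_n$, is an isomorphism with $\|U-\Id\|\le \eta$, where $\eta$ is arbitrarily small.

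For $\|T\|_S$: let $W$ be infinite dimensional and $\eta>0$. Build $W_0=[w_n]\subseteq W$ and $X_0=[x_n]\subseteq X$ as above. Since $T_{|X}$ is $(\|T_{|X}\|_S+\eta)$-strongly singular, $X_0$ contains an infinite dimensional $X_1$ with $\|T_{|X_1}\|\le \|T_{|X}\|_S+\eta$. Pulling back, $W_1=U^{-1}X_1\subseteq W$ satisfies, for $w\in W_1$,
\[
\|Tw\|\le \|TUw\|+\|T\|\,\|w-Uw\|\le (\|T_{|X}\|_S+\eta)(1+\eta)\|w\|+\|T\|\eta\|w\|.
\]
Letting $\eta\to 0$ gives $\|T\|_S\le \|T_{|X}\|_S$. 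The same argument handles $\rho_s$. Given $W$, build $W_0$ and $X_0$, and pick a normalized $x\in X_0$ with $\|Tx\|\le\rho_s(T_{|X})+\eta$. Then $w=U^{-1}x/\|U^{-1}x\|$ satisfies $\|Tw\|\le (\rho_s(T_{|X})+\eta)(1+2\eta)+2\eta\|T\|$.

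The main obstacle is the perturbation step: producing the $w_n$ so that the whole subspace $[w_n]$, and not just individual vectors, lies close to $i(X)$, with a uniformly controlled isomorphism $U$. In the quasi-normed setting one must first pass to an equivalent norm, which for Banach-space twisted sums changes nothing since only ratios tending to $1$ matter. The remaining care is to make the $\delta_n$ summable against the basis constant of $(w_n)$, which Mazur's lemma keeps bounded by $2$. Everything after that is the routine estimate displayed above.
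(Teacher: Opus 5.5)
Your proof is correct and is essentially the paper's argument. The paper also obtains, inside any infinite dimensional $W$, a subspace $W'$ with an embedding $\alpha=i+k\colon W'\to X$, where $k$ is compact with $\|k\|\le\varepsilon$, and then runs the same estimate comparing $Tw$ with $T\alpha w$ up to an error $\|T\|\varepsilon$; the differences are that it cites this classical lemma rather than rebuilding it from strict singularity of $q$ via Mazur's technique and small perturbations, and that it argues contrapositively where you argue directly.
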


\begin{proof} For the first affirmation, we only need to prove that
$\rho_s(T)\leq \rho_s(T_{|X})$, so assume $\delta>0$ and $W \subseteq X \oplus_\Omega Y$ are such that $\|Tw\| \geq \delta \|w\|$ for all $w \in W$. By classical results (\cite[Lemma 8]{CCK},\cite[Lemma 1]{Fhdn}) there exists a subspace $W'$ of $W$ and an embedding $\alpha: W' \rightarrow X$ such that $k=\alpha-i : W' \to X$ is compact of norm at most $\varepsilon>0$. Then for any $z=\alpha(w) \in \alpha(W') \subseteq X$, $w \in W'$, we have that
$T(z)=T(kw+w)=T(kw)+Tw$ has norm at least
$\delta \|w\|-\varepsilon\|T\|\|w\| \geq (1+\varepsilon)^{-1}(\delta-\varepsilon\|T\|)\|z\|$; therefore
$T_{|X}$ is not $(1+\varepsilon)^{-1}(\delta-\varepsilon\|T\|)$-singular.
Since $\varepsilon$ was arbitrary, we get the expected result. 

 The second affirmation is obtained in a similar manner: if $\|T\|_S>\delta$ and $W \subseteq X \oplus_\Omega Y$ is such that  every $W'\subseteq W$ contains a normalized vector $w\in W'$ with $\|Tw\|>\delta$, then there is an embedding $\alpha : W' \to X'$ of the form $i+k$, for some subspaces $W'\subseteq W$ and $X'\subseteq iX$ and a compact operator $k$ with $\|k\|<\varepsilon$. It follows that for every $X''\subseteq X'$, there is $x=\alpha(w)\in X''$ such that $\|Tx\|\geq \|Tw\|-\|Tkw\|> \delta - \|T\|\varepsilon$ and then $T_{|X}$ is not $\delta- \|T\|\varepsilon$-strongly singular.

\end{proof}

\begin{defi} If $Z$ is a twisted sum $X \oplus_\Omega Y$, we consider the restriction map
$$r: L(Z) \rightarrow L(X,Z)$$ and the induced map 
$$\tilde{r}: L(Z)/S(Z) \rightarrow L(X,Z)/S(X,Z).$$
\end{defi} 

\begin{remark}
By Proposition \ref{restriction}, if the twisted sum $Z=X \oplus_\Omega Y$ is singular, then the map $\tilde{r}$ is injective and is actually
 isometric with respect to the $\|.\|_S$-norms. 
 \end{remark}

 From the above we obtain the following principle to obtain inequivalent norms on a quotient $L(Z)/S(Z)$, which we shall apply in Kalton-Peck spaces in Chapter 6. 

\begin{proposition}\label{principle}
   Assume a twisted sum
$Z=X \oplus_\Omega Y$ is singular. Assume that for every $n \in \N$, there is a map $t_n \in L(X,Z)$ with norm at most $1$, which can be extended to a bounded map in $L(Z)$, but such that for any $s \in S(X,Z)$, any extension of $t_n+s$ has norm at least $n$. Then the quotient norm and the norm $\|.\|_S$ are inequivalent algebra norms on $L(Z)/S(Z)$.
\end{proposition}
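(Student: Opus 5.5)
We argue by contradiction. Suppose the quotient norm $\|\cdot\|_{L(Z)/S(Z)}$ and $\|\cdot\|_S$ were equivalent on $L(Z)/S(Z)$. By Proposition \ref{seminorm}(a) we always have $\|\cdot\|_S \leq \|\cdot\|_{L(Z)/S(Z)}$, and by Proposition \ref{seminorm}(d) both are algebra norms; the second is a genuine norm on the quotient. Equivalence would therefore give a constant $M$ with $\|T\|_{L(Z)/S(Z)} \leq M\|T\|_S$ for every $T \in L(Z)$. We fix $n > M(1+\delta)$ for some small $\delta>0$ and use the operator $t_n$ from the hypothesis.

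First, let $T_n \in L(Z)$ be any bounded extension of $t_n$. By Proposition \ref{restriction}, which applies because the twisted sum is singular, $\|T_n\|_S = \|(T_n)_{|X}\|_S = \|t_n\|_S$. Using Proposition \ref{seminorm}(a) for operators in $L(X,Z)$ (here $\dim X=\infty$, since $X$ carries an infinite dimensional singular structure), we get $\|t_n\|_S \leq \|t_n\| \leq 1$. The assumed equivalence then yields $\|T_n\|_{L(Z)/S(Z)} \leq M$. So there is a strictly singular $S_n \in S(Z)$ with $\|T_n - S_n\| \leq M(1+\delta) < n$.

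Next, set $s := -(S_n)_{|X}$. It is the restriction of a strictly singular operator, hence $s \in S(X,Z)$. The operator $T_n - S_n \in L(Z)$ is an extension of $t_n + s$, and its norm is less than $n$. This contradicts the hypothesis that every extension of $t_n+s$ has norm at least $n$. Hence the two norms are inequivalent.

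There is no serious obstacle. The main points to check are that Proposition \ref{restriction} is applied to an operator with values in $Z$ (the statement allows an arbitrary target space), and that the quotient norm is reached only approximately, which is handled by the slack factor $1+\delta$.
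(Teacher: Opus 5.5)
Your proof is correct and is essentially the paper's argument: Proposition \ref{restriction} gives $\|T_n\|_S=\|t_n\|_S\leq 1$, while every strictly singular perturbation $T_n-S$ extends $t_n-S_{|X}$ with $S_{|X}\in S(X,Z)$, so $\|T_n\|_{L(Z)/S(Z)}\geq n$. You phrase this by contradiction and spell out the second step, which the paper only asserts.
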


 \begin{proof} If $T_n$ is any extension of $t_n$, then $\|T_n\|_S =\|t_n\|_S \leq 1$ by Proposition \ref{restriction}, while
 $\|T_n\|_{L(X)/S(X)} \geq n$.
 \end{proof}

\section{Symplectic structures and twisted Hilbert spaces induced by complex interpolation} \label{symplectic}

Recall that a real (respectively, complex) Banach space $X$ is called \emph{symplectic} if there exists a continuous alternating bilinear form $\omega : X \times X \to \mathbb{R}$ (respectively, a sesquilinear form $\omega : X \times X \to \mathbb{C}$, i.e.,  linear in the first variable and conjugate-linear in the second) such that $\omega(x,y) = -\omega(y,x)$ (respectively, $\omega(x,y) = -\overline{\omega(y,x)}$ for all $x,y \in X$), and the induced operator $L_\omega : X \to X^*, \, L_\omega(x)(y) = \omega(x,y)$, is an isomorphism onto $X^*$ (respectively, onto the conjugate dual $\overline{X}^*$). This condition implies that $X$ is reflexive, and we can identify $L_\omega^* = -L_\omega$ (respectively, $\overline{L_\omega}^* = -L_\omega$). 

In \cite{kaltonasymplectic}, Kalton and Swanson showed that the Kalton–Peck space $Z_2$ admits a symplectic structure defined by 
\[
\omega((x,y),(x',y')) = \langle x, y' \rangle_{\ell_2} - \langle y, x' \rangle_{\ell_2}, \qquad (x,y), (x',y') \in Z_2,
\]
They used this form to define, an involution $*$ on $L(Z_2)$, where for each operator $T \in L(Z_2)$, its adjoint operator $T^*$ is determined by 
\[
\omega(Tu,v) = \omega(u,T^*v), \qquad u,v \in Z_2.
\]
This construction also extends to the complex setting, by considering the complex version of $Z_2$.
More generally, symplectic structures arise naturally in the study of certain twisted Hilbert spaces obtained by complex interpolation. 

\begin{lemma}\label{lemo} Let $X$ be a reflexive complex Banach space such that $(X, \bar X^*)$ is a compatible couple and such that $X \cap \bar X^*$ is dense in both $X$ and $\bar X^*$. Then $(X, \bar X^*)_{1/2}$   is isometric to a Hilbert space and the respective derived space is symplectic. 
\end{lemma}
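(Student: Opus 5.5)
The statement has two claims: the interpolation space $X_{1/2}:=(X,\bar X^*)_{1/2}$ is a Hilbert space, and the derived (twisted) space $dX_{1/2}$ is symplectic. For the first, I will follow the classical argument going back to Calderón and Pisier (see also the treatment of Watbled-type results). The sesquilinear pairing $\langle x,y\rangle$ between $X$ and $\bar X^*$, restricted to the dense intersection $X\cap \bar X^*$, will be compared with the interpolation norm. Using the Calderón space $\mathcal F(X,\bar X^*)$ on the strip $\mathbb S=\{0\le \mathrm{Re}\, z\le 1\}$, for $f,g\in\mathcal F$ I consider the function $h(z)=\langle f(z),\overline{g(1-\bar z)}\rangle$. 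This function is analytic and bounded, and on both boundary lines it is controlled by $\|f\|_{\mathcal F}\|g\|_{\mathcal F}$ through the duality $X\times \bar X^*$. Evaluating at $z=1/2$ gives $|\langle x,x\rangle|\le \|x\|_{1/2}^2$.

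For the reverse inequality, I use the reiteration/duality theorem $(X_0,X_1)_{1/2}^*=(X_0^*,X_1^*)_{1/2}$, valid here by reflexivity and density. Since the couple $(X,\bar X^*)$ is sent to its own conjugate dual with endpoints swapped, $X_{1/2}$ is identified with its own conjugate dual via $x\mapsto\langle\cdot,x\rangle$. Combined with the first inequality, this yields $\langle x,x\rangle=\|x\|_{1/2}^2$. Hence the norm comes from an inner product, and $X_{1/2}$ is isometrically a Hilbert space.

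For the symplectic structure, I realize $dX_{1/2}$ as the pairs $(f'(1/2),f(1/2))$, $f\in\mathcal F$, with the quotient norm. On it I define
\[
\omega\big((f'(1/2),f(1/2)),(g'(1/2),g(1/2))\big)=\langle f'(1/2),g(1/2)\rangle-\langle f(1/2),g'(1/2)\rangle,
\]
mirroring the Kalton--Swanson form on $Z_2$. The steps are as follows.
\begin{enumerate}[(i)]
\item Show $\omega$ is well defined and bounded. Well-definedness means independence of the choice of $f$, i.e.\ $\omega$ vanishes when $f(1/2)=f'(1/2)=0$. To get both this and boundedness, I differentiate the analytic function $h(z)$ above at $z=1/2$. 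Its derivative equals $\omega$ up to sign, by the symmetry $z\mapsto 1-\bar z$, and Cauchy estimates on the strip bound it by $C\|f\|\|g\|$.
\item Check the antisymmetry $\omega(u,v)=-\overline{\omega(v,u)}$ directly from the Hermitian symmetry of the inner product on $X_{1/2}$.
\item Show $L_\omega: dX_{1/2}\to \overline{dX_{1/2}}^*$ is an isomorphism. I use the exact sequence $0\to X_{1/2}\to dX_{1/2}\to X_{1/2}\to 0$ together with the known duality $(dX_\theta)^*\simeq d(X^*)_\theta$ (Rochberg, Cwikel et al.). This identifies the dual of the derived space with the derived space of the dual couple, which here is the conjugate of the same couple. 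Under this identification $L_\omega$ becomes, up to the Hilbert identification of the first part, the map $(a,b)\mapsto(b,-a)$, which is an isomorphism. Alternatively, one can argue by the five lemma: $L_\omega$ induces the Riesz isomorphisms on the subspace and the quotient, hence it is an isomorphism.
\end{enumerate}

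The main obstacle is step (iii), namely the duality of derived spaces and matching it with the conjugate dual under the endpoint swap. The sign and conjugation bookkeeping there must be done carefully to get $\overline{L_\omega}^*=-L_\omega$. Reflexivity and the density hypothesis are exactly what the duality theorem for complex interpolation and its derived version require.
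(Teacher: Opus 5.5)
\paragraph{A gap in your proof of the Hilbert-space claim.} The self-contained argument breaks down at the ``reverse inequality''. Calderón duality together with the endpoint swap $(X,\bar X^*)\leftrightarrow(\bar X^*,X)$ only tells you that $R\colon x\mapsto\langle x,\cdot\rangle$ is an isometry of $X_{1/2}$ onto its conjugate dual, i.e.\ $\|x\|_{1/2}=\sup_{\|y\|_{1/2}\le 1}|\langle x,y\rangle|$. Combined with $|\langle x,y\rangle|\le\|x\|_{1/2}\|y\|_{1/2}$, this does not yield $\langle x,x\rangle=\|x\|_{1/2}^2$.

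Here is a counterexample to that inference. Let $X=\ell_2$ be identified with $\bar X^*$ through the Riesz map composed with a self-adjoint unitary $U\neq\Id$, so that the pairing on $X\cap\bar X^*=\ell_2$ is $(x,y)\mapsto\langle x,Uy\rangle_{\ell_2}$. All hypotheses of the lemma hold, and the pairing is Hermitian, bounded and induces an isometric self-duality. Yet $\langle x,Ux\rangle_{\ell_2}\neq\|x\|^2$ in general. Here $X_{1/2}=\ell_2$ is still a Hilbert space, but not with the pairing as inner product.

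The missing ingredient is positivity of the pairing on $X\cap\bar X^*$. This is the extra condition on the embedding $X\to\bar X^*$ built into Watbled's theorem, which is all the paper invokes for this part. With positivity, Cauchy--Schwarz gives $|\langle x,y\rangle|\le\langle x,x\rangle^{1/2}\langle y,y\rangle^{1/2}\le\langle x,x\rangle^{1/2}\|y\|_{1/2}$, hence $\|x\|_{1/2}\le\langle x,x\rangle^{1/2}$, and the argument closes.

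Two related points need the same care. First, your bound for $h$ on $\mathrm{Re}\,z=1$ (where $f$ is measured in $\bar X^*$ and $g$ in $X$) and the antisymmetry in (ii) both need the two ways of pairing elements of $X\cap\bar X^*$ to agree up to conjugation. This Hermitian symmetry must be taken as part of the meaning of ``compatible couple'' and should be stated. Second, $f'(1/2)$ need not lie in $X_{1/2}$, so (ii) cannot be read off from the inner product on $X_{1/2}$. It has to be checked on $X\cap\bar X^*$-valued functions and transported through $\omega=h'(1/2)$.

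\paragraph{The symplectic part.} Your main route in (iii) is the paper's: Rochberg--Weiss duality, realized by the pairing $\langle x^*,y\rangle+\langle y^*,x\rangle$, combined with the endpoint swap. Note, however, that the conjugate dual couple is the swapped couple $(\bar X^*,X)$, not ``the same couple''. The swap is exactly where the sign comes from. The paper observes that $f\mapsto f(1-\cdot)$ is an isometry $\mathcal F(X_0,X_1)\to\mathcal F(X_1,X_0)$. Hence $(X_0,X_1)_{1/2}=(X_1,X_0)_{1/2}$ isometrically, and changing the sign of the derivative coordinate is an isometry between the two derived spaces. Composing this with the Rochberg--Weiss map turns $\langle x^*,y\rangle+\langle y^*,x\rangle$ into the alternating form.

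Your five-lemma alternative is correct and more elementary. You first need $\omega$ bounded and well defined:
\begin{itemize}
\item Boundedness comes from your Cauchy estimate on $h$.
\item Well-definedness is not a Cauchy-estimate matter. It follows by writing any $f$ with $f(1/2)=f'(1/2)=0$ as $\varphi^2\hat f$ with $\hat f\in\mathcal F$, where $\varphi$ is a conformal map of the strip onto the disc with $\varphi(1/2)=0$.
\end{itemize}
Then the formula gives $L_\omega i=q^*R$ and $i^*L_\omega=-Rq$, and the 3-lemma plus the open mapping theorem show that $L_\omega$ is an isomorphism. This bypasses the duality theorem for derived spaces entirely. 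It uses only that $R$ is an isomorphism, so it does not even need positivity.
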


\begin{proof}The proof is consequence of the following facts.

1) The fact that the interpolated space $(X, \bar X^*)_{1/2}$  is isometric to a Hilbert space follows from \cite{Wat}.

2) Let $(X_0,X_1)$ be a compatible couple of complex Banach spaces. Then $Z=(X_0,X_1)_{1/2}$ and $W=(X_1,X_0)_{1/2}$ are equals with  same norms. Also $(x,y) \mapsto (x,-y)$ is a linear isometry from $dZ$ onto $dW$.

Indeed, $T: \mathcal F(X_0,X_1) \to \mathcal F(X_1, X_0)$ given by $Tf( z) = f(1-z)$ for all $z\in \overline{\mathbb S}$ is an isometry.  Hence if $B: Z \to  \mathcal F(X_0,X_1) $ is a section, then $C(x)(z) = B(x)(1-z)$ is a section from $W$ to $ \mathcal F(X_1, X_0)$. Also $d/dz B(x)(1-z)_{|1/2}=-B'(x)(1/2)$.

3) Let  $\overline{X}=(X_0,X_1)$ be a compatible couple of complex Banach spaces such that $\Delta (\overline{X})=X_0\cap X_1$ is dense in $X_0$ and in $X_1$  and that at least one of the spaces, $X_0$ or $X_1$, is reflexive. Then 

\begin{itemize}
\item (Calder\'on, 1964)$(X_0^*, X_1^*)$ is a compatible couple and for every $\theta \in (0,1)$ we have $(X_0^*, X_1^*)_\theta=(X_0,X_1)_\theta^*$ isometrically.

\item (Rochberg; Weiss, 1983) The following  diagram is commutative 

\begin{equation}\label{duality}\begin{CD}
0 @>>> X_\theta ^* @>>>  (dX_\theta)^* @>>> X_\theta ^*@>>>0  \\
@. @|  @VV{T}V @| \\
0@>>> X_\theta ^*@>>> d(X_\theta^*)  @>>> X_\theta ^*@>>>0
\end{CD}\end{equation}
where $T(x^*,y^*)(x,y)= \pin{x^*}{y}+ \pin{y^*}{x}$ for all $(x^*,y^*)\in d(X_\theta^*)$ and all $(x,y)\in dX_\theta$. 
\end{itemize}

Let $Z= d( (X, \bar X^*)_{1/2})$.  It follows that $S: Z\times Z \to \R $ given by  $ S(x,y)(z,w)= ( (x,y), (z,w)):= \pin{x}{w} - \pin{y}{z}$ is a symplectic structure on $Z$. 
\end{proof}
Under the conditions of Lemma \ref{lemo}, the algebra $L(Z)$ of all bounded operators on $Z$ is a *-algebra with the involution * defined by
\[ S(T\xi, \eta) = S(\xi, T^*\eta),\]
for all $\xi, \eta \in Z$. We denote by $L_S: Z\to Z^*$ the associated symplectic isomorphism given by $L_S (\xi)\eta= S(\xi, \eta)$. Hence $T^*= L_S^{-1} T^+ L_S$, where $T^+ \in L(Z^*)$ is the conjugate operator of $T$. 

\subsection{Perturbations of symplectic structures}

Analogously to the study of complex structures in Section \ref{perturbations}, in this section we extend some results of \cite{CCGP} concerning singular perturbations of symplectic structures to the case of inessential perturbations. Throughout this section, $X$ denotes a  real or complex reflexive infinite-dimensional Banach space.

\begin{pro} Let $\alpha: X \to X^*$ be an isomorphism such that $\|\alpha + \alpha^*\| < 2 \| \alpha^{-1}\|^{-1}$ ($\|\alpha + \overline{\alpha}^*\| < 2 \| \alpha^{-1}\|^{-1}$, for the complex case). Then $X$ admits a symplectic structure.
\end{pro}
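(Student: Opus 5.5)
The approach mirrors the proposition on operators whose square is norm-close to $-\Id$: we try to exhibit a symplectic form as a perturbation of $\alpha$. I treat the real case; the complex case is identical with $\overline{\alpha}^*$ in place of $\alpha^*$. Via the canonical identification $X^{**}=X$ (reflexivity), $\alpha^*$ is again an operator $X\to X^*$. A bounded operator $\beta:X\to X^*$ defines an alternating form $\omega(x,y)=\beta(x)(y)$ exactly when $\beta^*=-\beta$. So we look for an antisymmetric isomorphism close to $\alpha$.

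The natural candidate is the antisymmetric part
\[
\beta=\tfrac12(\alpha-\alpha^*)=\alpha-\tfrac12(\alpha+\alpha^*).
\]
First, $\beta^*=\tfrac12(\alpha^*-\alpha)=-\beta$, because $\alpha^{**}=\alpha$ under the reflexive identification. Second, $\beta$ is an isomorphism. We write
\[
\beta=\alpha\bigl(\Id_X-\tfrac12\alpha^{-1}(\alpha+\alpha^*)\bigr),
\]
and observe that $\|\tfrac12\alpha^{-1}(\alpha+\alpha^*)\|\le \tfrac12\|\alpha^{-1}\|\,\|\alpha+\alpha^*\|<1$ by the hypothesis. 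A Neumann series then shows the bracket is invertible, so $\beta$ is an isomorphism onto $X^*$. Consequently $\omega(x,y):=\beta(x)(y)$ is continuous and alternating, and $L_\omega=\beta$ is onto $X^*$. Hence $\omega$ is a symplectic structure.

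In the complex case we take $\beta=\tfrac12(\alpha-\overline{\alpha}^*)$ and obtain $\overline{\beta}^*=-\beta$ in the same way. The sesquilinear form $\omega(x,y)=\beta(x)(\bar y)$ then satisfies $\omega(x,y)=-\overline{\omega(y,x)}$.

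The only delicate point is bookkeeping: checking that $\alpha^{**}=\alpha$ (respectively $\overline{\overline{\alpha}^*}^*=\alpha$) under the chosen identifications, and that the sign convention for $L_\omega^*=-L_\omega$ matches $\beta^*=-\beta$. Once that is in place, the estimate is just the Neumann series, and the factor $2$ in the hypothesis is exactly what makes it work.
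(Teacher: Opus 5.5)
Your proposal is correct and is essentially the paper's proof: the paper also takes $\beta=\alpha-\tfrac12(\alpha+\alpha^*)$, notes $\beta^*=-\beta$, and concludes from $\|\beta-\alpha\|=\tfrac12\|\alpha+\alpha^*\|<\|\alpha^{-1}\|^{-1}$, via the Neumann series, that $\beta$ is an isomorphism. Your factorization $\beta=\alpha\bigl(\Id_X-\tfrac12\alpha^{-1}(\alpha+\alpha^*)\bigr)$ is simply an explicit form of that same perturbation step, and your bookkeeping for the complex case, with $\overline{\alpha}^*$ in place of $\alpha^*$, goes beyond what the paper writes out.
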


\begin{proof} In the real case, let us call $\beta= \alpha - s/2$, where $s= \alpha + \alpha^*$. Then $\beta^*= - \beta$ and since 
\[ \|\beta - \alpha\|= \|s/2\| < \| \alpha^{-1}\|^{-1}\]
we have by the classical Neumann series argument for stability of invertibility under small perturbations,   that $\beta$ is an isomorphism.
\end{proof}

\begin{pro}
    Let $\alpha: X \to X^*$ be an isomorphism such that $\alpha + \alpha^*$ ($\alpha + \overline{\alpha}^*$, for the complex case) is $C$-inessential for some $C>2 \|\alpha^{-1}\|$. Then $X$   or its hyperplanes admit a symplectic structure.
\end{pro}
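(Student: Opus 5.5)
The plan is to follow the complex-structure analogue, namely the proposition giving $(I+S)^2=-\Id$ or the hyperplane alternative, but now with an antisymmetric part of $\alpha$ in place of a square root.

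\textbf{Splitting off the symmetric part.} Write $s=\alpha+\alpha^*$ and $\beta=\alpha-s/2$, exactly as in the previous proposition. Since $s^*=s$, we get $\beta^*=-\beta$, so $\beta$ is antisymmetric. It is not yet invertible in general. The aim is to show that $\beta$ is Fredholm of index $0$. Factor
$$\beta=\alpha\Bigl(\Id_X-\tfrac12\alpha^{-1}s\Bigr).$$
We have $\|\tfrac12\alpha^{-1}\|=\tfrac12\|\alpha^{-1}\|<C^{-1}\cdot\ldots$; more precisely, set $T=\tfrac12\alpha^{-1}\in L(X^*,X)$. Then $\|T\|=\|\alpha^{-1}\|/2<C$ would follow from $C>\|\alpha^{-1}\|/2$. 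I expect the stated hypothesis $C>2\|\alpha^{-1}\|$ to be used in this form, possibly up to the normalisation of the constant. $C$-inessentiality of $s$ then gives that $\Id_X-Ts$ is Fredholm. By continuity of the index along $t\mapsto \Id-tTs$, $t\in[0,1]$, which stays Fredholm since $\|tT\|<C$, the index is $0$. Hence $\beta$ is Fredholm of index $0$, with finite dimensional kernel $N=\ker\beta$ and range of codimension $\dim N$.

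\textbf{Reducing to a nondegenerate form.} Consider the continuous alternating form $\omega(x,y)=\beta(x)(y)$ (sesquilinear in the complex case, where $\overline{\beta}^*=-\beta$). Its radical is $N$. Reflexivity and antisymmetry give $\operatorname{ran}\beta=N^{\perp}$: indeed $\operatorname{ran}\beta$ is closed with annihilator $\ker\beta^*=\ker\beta=N$. Choose a complement $X=N\oplus W$. Then $\beta$ restricted to $W$, composed with the restriction map $X^*\to W^*$, is an isomorphism $W\to W^*$. This holds because $N^\perp\simeq (X/N)^*\simeq W^*$. So $\omega|_{W}$ is a symplectic structure on $W$, a subspace of finite codimension $n=\dim N$.

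\textbf{Adjusting the finite dimensional part.} If $n$ is even in the real case, put a standard symplectic form on $N\simeq\R^{n}$ and take the orthogonal direct sum with $\omega|_W$; this is a symplectic structure on $X$. If $n$ is odd, do the same on a hyperplane $N'\oplus W$ with $\dim N'=n-1$, giving a symplectic structure on a hyperplane of $X$. In the complex sesquilinear case every finite-dimensional space carries a form $\omega(x,y)=i\langle x,y\rangle$, so $X$ itself admits one.

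The main obstacle I expect is the first step. Applying the inessentiality definition requires the operator $T\in L(X^*,X)$ to have the right domain and codomain, and the constant must be matched so that $\|T\|<C$. I would check carefully whether the hypothesis is meant as $\|\tfrac12\alpha^{-1}\|<C$. If it is literally $C>2\|\alpha^{-1}\|$, then it holds a fortiori whenever $\|\alpha^{-1}\|\neq0$, and the argument goes through.
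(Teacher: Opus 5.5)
Your proposal is correct and follows the paper's proof. The paper also sets $\beta=\alpha-s/2$ and gets that $\beta$ is Fredholm of index $0$ from Lemma~\ref{fredholm2} with $C(\alpha)=\|\alpha^{-1}\|$, which amounts to your factorization $\beta=\alpha(\Id_X-\tfrac12\alpha^{-1}s)$; it then defers the splitting off of the finite-dimensional kernel and the parity adjustment to \cite[Proposition 8.7]{CCGP}, steps you carry out explicitly. As you noticed, the hypothesis $C>2\|\alpha^{-1}\|$ is more than enough, since $C>\|\alpha^{-1}\|/2$ already suffices.
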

\begin{proof}
    In the real case, let us call $\beta= \alpha - s/2$, where $s= \alpha + \alpha^*$. Then $\beta^*= - \beta$ and by Lemma \ref{fredholm2} $\beta$ is a Fredholm operator with index 0. The rest of the proof follows the same arguments as in \cite[Proposition 8.7]{CCGP}.
\end{proof}

\begin{pro}
    Let $\alpha : X \to X^*$ be a symplectic isomorphism and $s: X \to X^*$ be $C$-inessential for some $C> 2 \|\alpha^{-1}\|$. If $\alpha+s$ is also a symplectic isomorphism, then $\alpha$ and $\alpha+ s$ are equivalent (i.e., there exists an isomorphism $T: X \to X$ such that $T^* \alpha T = \alpha+ s$.)
\end{pro}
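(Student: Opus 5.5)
The idea is to follow the pattern of the earlier perturbation results and of \cite[Proposition 8.7]{CCGP}. Write $\beta=\alpha+s$. Both $\alpha$ and $\beta$ are antisymmetric: $\alpha^*=-\alpha$ and $\beta^*=-\beta$ (with $\overline{\alpha}^*$ in the complex case). It follows that $s^*=-s$.

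\textbf{Step 1: an index-zero Fredholm operator.} Consider
\[
A=\alpha^{-1}\beta=\Id_X+\alpha^{-1}s \in L(X).
\]
Since $s$ is $C$-inessential with $C>2\|\alpha^{-1}\|>\|\alpha^{-1}\|$, the definition applied with $T=\alpha^{-1}$ shows that $\Id_X+\alpha^{-1}s$ is Fredholm. The same holds for $\Id_X+t\alpha^{-1}s$ for every $t\in[0,1]$, so by continuity of the index $A$ has index $0$. Equivalently, one can invoke Lemma \ref{fredholm2} with $C(\alpha)=\|\alpha^{-1}\|$. The stronger constant $2\|\alpha^{-1}\|$ is reserved for Step 2, where the intermediate forms $\alpha+s/2$ or $\alpha+ts$ also have to be Fredholm of index $0$. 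Since $\beta$ is itself an isomorphism, $A$ is in fact invertible. The real content is therefore to build $T$ with $T^*\alpha T=\beta$.

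\textbf{Step 2: construct $T$.} The classical route is a square root. Since $\alpha^{-1}\beta$ is self-adjoint with respect to the symplectic adjoint induced by $\alpha$, we look for $T=f(\alpha^{-1}\beta)$ with $f(z)^2=z$, chosen in the algebra generated by $A$. Then
\[
T^*\alpha T=\alpha f(A)^2=\alpha A=\beta.
\]
The obstruction is that the spectrum of $A$ may meet $(-\infty,0]$, so no holomorphic branch of $\sqrt z$ need exist on it. Here the inessentiality hypothesis is used. As in Proposition \ref{spectrum}, applied to $\alpha^{-1}s$ with the constant $C>2\|\alpha^{-1}\|$, the spectrum of $\alpha^{-1}s$ outside a disc of radius strictly less than $1/2$ consists of finitely many eigenvalues of finite multiplicity. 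Hence $\Sp(A)$ lies in the disc $\{|z-1|<1/2\}$ (away from $(-\infty,0]$), together with finitely many isolated eigenvalues. Let $P$ be the Riesz projection onto this finite part. Then $P$ has finite rank and commutes with $A$. One also checks that $P$ is $\alpha$-self-adjoint, i.e.\ $\alpha P=P^*\alpha$, because the contour can be chosen symmetric. On $(\Id-P)X$ the principal branch of $\sqrt z$ is defined, and we define $T$ there by holomorphic functional calculus. This gives a decomposition of $X$ into two $\alpha$- and $\beta$-orthogonal pieces: a finite-dimensional one, $PX$, and one on which $\alpha$ and $\beta$ are equivalent.

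\textbf{Step 3: the finite-dimensional piece.} On $PX$, both $\alpha$ and $\beta$ restrict to nondegenerate antisymmetric forms on the same finite-dimensional space. In the real case these are equivalent by linear algebra (Darboux). In the complex sesquilinear case we must also match signatures. For this we use a homotopy $\alpha+ts$, $t\in[0,1]$: by $C$-inessentiality each such form is Fredholm of index $0$, and along the path the signature of the finite-rank part can only change when an eigenvalue crosses the contour, which is excluded. Gluing the two pieces gives $T$.

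I expect the main obstacle to be Step 2 (together with the signature issue in Step 3). Specifically, one must show that the non-principal part of the spectrum is finite and $\alpha$-orthogonally split off, which is exactly what the factor $2$ in $C>2\|\alpha^{-1}\|$ should guarantee. I would follow \cite[Proposition 8.7]{CCGP} closely there.
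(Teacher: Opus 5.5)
For real $X$ your argument is correct and follows the paper's route. The paper sets $S=\alpha^{-1}s$, passes to $\widehat S$, uses Proposition~\ref{spectrum} to see that the part of $\Sp(\widehat S)$ outside a conjugation-invariant curve $\Gamma$ in the open unit disc is a finite set of eigenvalues of finite multiplicity, and defers the rest to \cite[Proposition~8.9]{CCGP}. Your Steps 2--3 are a correct way of filling this in: a square root of $\Id+S$ on the spectral part inside $\Gamma$, Darboux on the finite-rank part, and gluing along the $\alpha$- and $\beta$-orthogonal splitting.

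Two minor corrections. First, for real $X$ the relevant spectrum is that of $\widehat S$, and Lemma~\ref{complexification} only makes $\widehat S$ $C/(\sqrt2\|\alpha^{-1}\|)$-inessential. So the bulk of the spectrum lies in a disc of radius $\sqrt2\|\alpha^{-1}\|/C<\sqrt2/2$, not $1/2$. This radius is still $<1$, which is all the square root needs. Keeping it below $1$ after the $\sqrt2$ loss is the role of the factor $2$, not Fredholmness of intermediate forms. Second, Step 1 is superfluous, since $\alpha^{-1}\beta$ is invertible.

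The genuine gap is the complex (sesquilinear) case of Step 3. Along $\alpha+ts=\alpha(\Id+tS)$, $C$-inessentiality gives only Fredholm of index $0$, not invertibility, so the forms can degenerate at intermediate $t$. If $Sx=\mu x$ with $\mu<-1$ and $i\alpha(x)(x)\neq 0$, then $i(\alpha+ts)(x)(x)=(1+t\mu)\,i\alpha(x)(x)$ changes sign at $t=-1/\mu\in(0,1)$. Nothing excludes such $\mu$; invertibility of $\beta$ only rules out $\mu=-1$. The event that matters is an eigenvalue of $\Id+tS$ passing through $0$, not an eigenvalue crossing your contour.

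In fact the complex case cannot be proved. On complex $\ell_2$ take $\alpha(x)(y)=i\langle x,y\rangle$ and $s(x)(y)=-2i\,x_1\overline{y_1}$, so that $Se_1=-2e_1$. Then $s$ has rank one, hence is $C$-inessential for every $C$. Also $(\alpha+s)(x)(y)=i\langle Dx,y\rangle$ with $D=\mathrm{diag}(-1,1,1,\dots)$ is symplectic. Yet $\alpha(Te_1)(Te_1)=i\|Te_1\|^2\neq -i=(\alpha+s)(e_1)(e_1)$ for every $T$. So the proposition holds only for real $X$, which is the setting of the paper's proof, since it works with the complexification. You should restrict to that case and drop the signature homotopy rather than try to repair it.
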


\begin{proof}
     Let us call $S=\alpha^{-1} s$. Going through the complexification, $\widehat S$ is $ \sqrt 2C \| \alpha^{-1}\|^{-1}/2$-inessential. Consider $\Gamma$ a rectangular
with vertical and horizontal edges, rectifiable, conjugation-invariant, simple closed curve,
contained in the open unit disk, containing the open disk of radius $ \sqrt 2 \|\alpha^{-1}\|/C$, and such that $\Gamma \cap Sp(\widehat S) = \emptyset$.   Then Proposition \ref{spectrum} guarantees that all points of the spectrum of $\widehat S$ in the unbounded open domain $\mathbb{C} \setminus \Gamma$ form a finite set of eigenvalues of finite multiplicity. From this point, the rest of the proof follows the same argument as in \cite[Proposition~8.9]{CCGP}.

\end{proof}

\section{On Calkin algebras and homomorphisms into $L(H)$}

In this section the spaces considered (and in particular Kalton-Peck space) can again be either real or complex.
In Kalton-Swanson's paper \cite{kaltonasymplectic} 
we have the following remarkable result, where $H$ is a certain non-separable Hilbert space.

\begin{theorem}[Kalton-Swanson 1982]
There is a norm one *-algebra homomorphism $\Lambda$ from
$L(Z_2)$ into $L(H)$, sending $\Id$ to $\Id$. Furthermore, the kernel of $\Lambda$ is $S(Z_2)$.
\end{theorem}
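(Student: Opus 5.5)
We plan to reconstruct Kalton--Swanson's construction through an ultrapower/limit procedure combined with the singularity of the Kalton--Peck quotient map. For $T\in L(Z_2)$, write $T$ in matrix form relative to the (non-complemented) decomposition $Z_2=\ell_2\oplus_{\Omega_2}\ell_2$. Restricting to $\ell_2$, $T i$ is an operator in $L(\ell_2,Z_2)$, and by Proposition \ref{restriction} and the remark after it, the class of $T$ modulo $S(Z_2)$ is determined by $T_{|\ell_2}$, isometrically for $\|.\|_S$. The aim is to send this restriction into an operator on a large Hilbert space. Take $H$ to be a Hilbert space built from normalized block sequences of $\ell_2$ modulo a free ultrafilter, for instance $H=\ell_2^{\mathcal U}/\text{(vectors supported on finitely many coordinates)}$, or more concretely the space of "asymptotic" vectors. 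On $H$ the operator $T_{|\ell_2}$ acts after composing with $\pi$ or with a local linearization of $\Omega_2$ on block subspaces; on disjoint blocks, $\Omega_2$ is asymptotically linear up to a log-scaling. In this way the restriction to $\ell_2$ becomes, in the limit, a genuine bounded operator $\Lambda(T)\in L(H)$.

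The first step is to check multiplicativity and unitality. Because $T(\ell_2)$ need not lie in $\ell_2$, the composition $(TU)_{|\ell_2}$ is not simply $T_{|\ell_2}U_{|\ell_2}$. We will use the Kalton--Peck fact that every operator on $Z_2$ asymptotically maps $\ell_2$ into $\ell_2$ modulo strictly singular error on block subspaces: each $T$ has the form $\begin{pmatrix}\alpha&\beta\\0&\alpha\end{pmatrix}$ plus a strictly singular operator, with $\alpha$ commuting with $\Omega_2$ up to a bounded linear perturbation. Setting $\Lambda(T)=\hat\alpha$, the ultrapower extension of $\alpha$ to $H$, then gives multiplicativity and $\Lambda(\Id)=\Id$. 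We will have to verify that $\hat\alpha$ is well defined modulo compacts; this follows because $\ell_2$ operators that differ by compacts agree on the ultrapower modulo finitely supported vectors.

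The second step is the norm bound $\|\Lambda(T)\|\le\|T\|$ and the $*$-property. For the norm, we will estimate $\|\alpha x\|_{\ell_2}\le\|T(x,0)\|_{Z_2}$ for $x$ in the copy of $\ell_2$, up to errors that vanish asymptotically. For the involution, $\omega((x,0),(0,y))=\langle x,y\rangle$ shows that the $\omega$-adjoint of $\begin{pmatrix}\alpha&\beta\\0&\alpha\end{pmatrix}$ has diagonal entry $\alpha^t$, the Hilbert adjoint; this yields $\Lambda(T^*)=\Lambda(T)^*$.

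The third step is the kernel. If $T\in S(Z_2)$, then $\alpha$ is strictly singular on $\ell_2$, hence compact, so $\Lambda(T)=0$. Conversely, if $\Lambda(T)=0$, then $\alpha$ is compact, so $T_{|\ell_2}$ is strictly singular, and by Proposition \ref{restriction} $T$ is strictly singular. We expect the main obstacle to be the structural claim that every $T\in L(Z_2)$ is upper triangular with equal diagonal entries modulo $S(Z_2)$. We would first try to derive it from the singularity of $\Omega_2$: $\pi T i$ must be strictly singular, since otherwise it would produce a linear map making $\Omega_2$ trivial on a subspace, contradicting Proposition \ref{Omegaepsilonsingular}. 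The equality of the two diagonal entries would then come from the relation $\Omega_2\alpha-\delta\Omega_2=$ linear $+$ bounded.
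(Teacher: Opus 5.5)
Your choice of Hilbert space is essentially right (it is the paper's $W$). The gap is the structural claim on which everything after your first step rests: that every $T\in L(Z_2)$ is, modulo $S(Z_2)$, an operator $\begin{pmatrix}\alpha&\beta\\0&\alpha\end{pmatrix}$ leaving $\ell_2$ invariant, with $\alpha\in L(\ell_2)$. Well-definedness, multiplicativity and the $*$-property of $T\mapsto\hat\alpha$ all depend on it. For the $*$-property, note that the $\omega$-adjoint of $\begin{pmatrix}\alpha&\beta\\0&\delta\end{pmatrix}$ is $\begin{pmatrix}\delta^t&-\beta^t\\0&\alpha^t\end{pmatrix}$, so you need $\delta\equiv\alpha$ modulo compacts.

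You give no proof of the claim, and singularity does not deliver it: it only gives that $\pi T i$ is compact. The claim would make $T_{|\ell_2}-i\alpha$ a compact lift of $\pi T i$, since strictly singular operators $\ell_2\to Z_2$ are compact by Proposition \ref{rhoequalnorm}. But a compact operator on $\ell_2$ can lift boundedly to $Z_2$ without having any compact lift. Let $D$ be diagonal, equal to $2^{-k}$ on successive blocks $F_k$ with $\log|F_k|\approx 2^k$. Then $\Omega_2 D$ is bounded, so $x\mapsto(0,Dx)$ is a lift. Yet any lift restricted to $[e_i:i\in F_k]$ has norm $\gtrsim 2^{-k}\log|F_k|\approx 1$, because $\Omega_2$ is not $c\log n$-trivial on the span of $n$ basis vectors. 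So you would have to exploit that $Ti$ extends to all of $Z_2$.

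You would also need $i\alpha$ itself, not just $i\alpha$ plus a compact, to be extendable. Compact perturbations do not preserve extendability: not every compact operator $\ell_2\to Z_2$ extends (see the proof of Lemma \ref{blabla}(c)).

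Even for $\ell_2$-invariant $T$, compatibility reads $\Omega_2\delta-\alpha\Omega_2=$ linear $+$ bounded (you wrote the reverse order). Passing from this to ``$\alpha-\delta$ compact'' is a genuine result, not a formal consequence. In the paper its asymptotic form is Lemma \ref{Zp}, which relies on Kalton--Peck's Theorem 5.4 about strictly singular operators and the Orlicz space $\ell_f$. Incidentally, $\Omega_2$ is not asymptotically linear on block subspaces; singularity says exactly the opposite.

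The fix is not to decompose $T$ at all, and to define $\Lambda(T)$ through $T$ itself rather than through a diagonal entry. Kalton--Swanson's $\Lambda$ is a GNS-type construction, $\Lambda(A)\widetilde T=\widetilde{AT}$, on the completion of $L(Z_2)$ modulo $(T,T)=\lim_{\mathcal U}\|T(e_n,0)\|^2=0$. The paper reproduces it through $\beta(T)\widetilde{(x_n)}=\widetilde{(Tx_n)}$ on $W$, with $\beta(T)\tilde j=\tilde j\Lambda(T)$. There, compactness of $\pi T i$ is used only to get $d(Tx_n,\ell_2)\to0$, so that $\beta(T)$ maps $W$ into $W$. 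The rest then follows:
\begin{itemize}
\item $\beta(TU)=\beta(T)\beta(U)$ and $\beta(\Id)=\Id$ are automatic.
\item $\|\beta(T)\|=\|T_{|\ell_2}\|_{\mathrm{Calkin}}=\|T\|_S\le\|T\|$, by Proposition \ref{equalities} and Corollary \ref{ssss}.
\item The kernel is $S(Z_2)$, by Corollary \ref{ssss}.
\end{itemize}
The equal-diagonal phenomenon is needed only for the $*$-property, and only asymptotically. It enters as $\|i\pi T(\Omega_2 x_n,x_n)-Tx_n\|\to0$ for weakly null $(x_n)$ (Lemma \ref{Zp}), which yields Proposition \ref{proppq}.
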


The space $H$ is defined as the completion of the the quotient of $L(Z_2)$ by the subspace $Y=\{T: (T,T)=0\}$, where $(T,U)$ denotes a certain nontrivial symmetric  bilinear form on $L(Z_2)$ satisfying that
$(T,T)=\lim_{\U} \|T(e_n,0)\|_{Z_2}^2$.
For $T \in L(Z_2)$, the corresponding element in $H$ is denoted $\widetilde{T}$.

For $A \in L(Z_2)$, the map $\Lambda(A)$ is defined on $H$ by $$\Lambda(A)\widetilde{T}=\widetilde{AT}, T \in L(Z_2).$$

This defines a norm one algebra injective homomorphism $\hat{\Lambda}$
of $L(Z_2)/S(Z_2)$ into $L(H)$, and it was also proved by Kalton and Swanson that it is a *-homomorphism with respect to the symplectic structure on $Z_2$ defined in Section \ref{symplectic}. Our aim is to investigate further the properties of this map, as well as to extend it, and generalize it to other twisted sums.

\

Recall that a $B^*$-algebra is a Banach $*$-algebra whose norm satisfies the condition 
$\|a^* a\| = \|a\|^2$ for all $a \in A$ (\cite[pag 239]{Rickart}), while a $C^*$-algebra is defined as (*-isometric to) a closed *-subalgebra of $L(H)$ for some Hilbert space $H$.
In general, by the Gelfand-Naimark-Segal Theorem, one can construct  a Hilbert space $\mathcal H$ from a given {\em complex} $B^*$-algebra $A$ such that $A$ can be isometrically embedded into $L(\mathcal H)$ as a C*-subalgebra; so the two definitions coincide in the complex case. In particular the complex Calkin algebra $L(\ell_2)/K(\ell_2)$ is embedded as a C*-subalgebra of $L(\mathcal H)$ for some non-separable Hilbert space $\mathcal H$. 

In a similar manner we shall now see that $L(Z_2)/S(Z_2)$ (real or complex) can be embedded as a (non closed) *-subalgebra of $L(H)$, where $H$ is the non separable Hilbert space from \cite{kaltonasymplectic}, {\em if equipped with the $\|.\|_S$-norm}.

\subsection{Structures on self-extensions of $\ell_p$}

Given $T \in L(X,Y)$, recall that we denote
$\|T\|_{\mathrm{Calkin}}={\rm inf}_{K {\rm compact}}\|T+K\|$  the associated quotient norm in $L(X,Y)/K(X,Y)$, and that the inequality  $\|T\|_S \leq \|T\|_{\mathrm{Calkin}} \leq \|T\|_{L(X,Y)/S(X,Y)}$ holds.

In what follows, $\mathcal U$ denotes a non trivial ultrafilter on the integers.

\begin{proposition} \label{equalities}

Let $1 \leq p < \infty$, let $Y$ be a Banach space, and let $T \in L(\ell_p, Y)$. Define:
\begin{enumerate}[(a)]
    \item  
    $\displaystyle 
    \alpha(T) = \inf\{\, \|T_{|Z}\| : Z \subset \ell_p \text{ has finite codimension}\,\};$
    
    \item  
    $\displaystyle 
    \beta(T) = \sup\bigl\{\, \lim_{\U} \|T w_n\| : (w_n)\ \text{is weakly null in } \ell_p,\ 
           \lim_{\U} \|w_n\| = 1 \bigr\};$
    
    \item  
    $\displaystyle 
    \gamma(T) = \lim_{n \to \infty} \|T_{|X_n}\|,$
    where $X_n$ denotes the closed linear span of $\{e_k : k \geq n\}$.

\end{enumerate}
Then
\[
\alpha(T) = \beta(T) = \gamma(T) = \|T\|_{\mathrm{Calkin}}.
\]
\end{proposition}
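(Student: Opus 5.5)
I would establish the cycle of inequalities
\[
\|T\|_{\mathrm{Calkin}} \le \gamma(T) \le \alpha(T) \le \beta(T) \le \|T\|_{\mathrm{Calkin}},
\]
with a small adjustment at the step where I need to go from $\alpha$ to $\beta$.

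First, $\|T\|_{\mathrm{Calkin}} \le \gamma(T)$. Let $P_n$ be the basis projection onto $[e_k, k<n]$. Then $TP_n$ is finite rank and
\[
\|T - TP_n\| = \|T(\Id - P_n)\| \le \|T_{|X_n}\|,
\]
because $\Id - P_n$ is a norm one projection onto $X_n$ in $\ell_p$. Letting $n\to\infty$ gives the bound. Note that $\|T_{|X_n}\|$ is nonincreasing in $n$, so $\gamma$ is a genuine limit.

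Second, $\gamma(T) \le \alpha(T)$. Let $Z$ have finite codimension. I would show that for any $\delta>0$ and any $n$, the unit sphere of $Z$ contains vectors nearly attaining $\|T_{|X_n}\|$ up to $\delta$. Pick $x\in S_{X_n}$ with finite support and $\|Tx\| \ge \|T_{|X_n}\| - \delta$. Since $Z$ has finite codimension, it contains vectors $z$ which are small perturbations of vectors $y$ supported far out. Concretely, $Z = \bigcap_{i\le m} \ker f_i$, and because $f_i(e_k)\to 0$ (weak* null, as $p\ge 1$ and $\ell_p$ has a shrinking basis when $p>1$, with $f_i\in\ell_\infty$ when $p=1$) one can correct the far-out pieces. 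This step looks problematic as stated, since $x$ is fixed while $Z$ is arbitrary. Instead I would use the monotonicity directly: $\gamma(T)\le \|T_{|X_n}\|$ for all $n$ is automatic, and what I actually need is $\gamma(T)\le\|T_{|Z}\|$.

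To get that, I would take normalized blocks $x_j \in X_{n_j}$ with $\|Tx_j\|\ge \gamma(T)-\delta$ and disjoint supports going to infinity. Each block is weakly null (for $p=1$ I would use $\ell_1$-normalized blocks, which are still annihilated in the limit by $c_0$ functionals but not necessarily by $\ell_\infty$ functionals; this is the delicate point). Then I would find a combination in $Z$. In $\ell_p$ with $p>1$, $(x_j)$ weakly null implies $d(x_j, Z)\to 0$ after adding a bounded correction from a finite-dimensional complement. For $p=1$, I would instead choose $x_j$ flat, namely averages of $N$ disjoint basis vectors each nearly normed by $T$, and use the fact that for a fixed $f\in\ell_\infty$, among many candidate blocks one can pair signs or differences so that $f$ nearly vanishes. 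Alternatively I would replace $\gamma\le\alpha$ by the route $\|T\|_{\mathrm{Calkin}}\le\alpha$ directly: given $Z$ of finite codimension, let $Q$ be a projection onto $Z$, so $T - TQ$ is finite rank and $\|TQ\|\le \|T_{|Z}\|\,\|Q\|$. Here the obstacle is the norm $\|Q\|$, which is why I prefer the block argument.

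Third, $\alpha(T)\le\beta(T)$. If $\|T_{|Z}\|>c$ for every $Z$ of finite codimension, pick unit $w_n\in X_n$ with $\|Tw_n\|>c$. For $p>1$ this sequence is weakly null. For $p=1$ I would pass to flat blocks, or recall that $\beta$ only tests weakly null sequences, and the unit vector basis of $\ell_1$ is not weakly null. I would then handle $p=1$ through $\alpha\le\gamma$ and Proposition \ref{equals}-type block arguments rather than $\beta$.

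Fourth, $\beta(T)\le\|T\|_{\mathrm{Calkin}}$. For compact $K$ and $(w_n)$ weakly null, we have $Kw_n\to0$, so
\[
\lim_{\U}\|Tw_n\| = \lim_{\U}\|(T-K)w_n\| \le \|T-K\|.
\]
I would also close the cycle via $\gamma \le \beta$, using $w_n\in S_{X_n}$ which are weakly null when $p>1$, and via $\alpha\le\gamma$, which is trivial since $X_n$ has finite codimension. With these, the cycle becomes
\[
\mathrm{Calkin}\le\gamma\le\beta\le\mathrm{Calkin}, \qquad \alpha\le\gamma,
\]
together with $\gamma\le\alpha$ from the block argument. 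The main obstacle is the case $p=1$, where sequences in $X_n$ need not be weakly null and functionals in $\ell_\infty$ do not vanish at infinity.
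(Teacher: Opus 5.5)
For $1<p<\infty$ your argument is complete and correct, and it differs only mildly from the paper's. Your chain is:
\begin{itemize}
\item $\|T\|_{\mathrm{Calkin}}\le\gamma(T)$, via the norm-one projections $\Id-P_n$ onto $X_n$;
\item $\gamma(T)\le\beta(T)\le\|T\|_{\mathrm{Calkin}}$, because any bounded sequence with $w_n\in X_n$ is weakly null and compact operators send weakly null sequences to norm-null ones;
\item $\alpha(T)\le\gamma(T)$, trivially;
\item $\gamma(T)\le\alpha(T)$, because the quotient map onto the finite-dimensional space $\ell_p/Z$ is compact, so $d(x_n,Z)\to0$ for unit vectors $x_n\in X_n$ with $\|Tx_n\|>\gamma(T)-\delta$.
\end{itemize}
The paper instead proves $\alpha(T)\le\|T\|_{\mathrm{Calkin}}$ for arbitrary operators by an $\varepsilon$-net argument. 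It obtains $\gamma(T)\le\alpha(T)$ by replacing the functionals in $\ell_q$ that define $Z$ with finitely supported ones, so that some $X_k$ lies in a small perturbation of $Z$. Your compactness argument is shorter and makes the net unnecessary. Your hesitation about $\|Q\|$ is also well founded. The paper's step $\|T-TP\|=\|T_{|Z}\|$, for an arbitrary finite-rank projection $P$ with kernel $Z$, is only justified when $\|\Id-P\|=1$, which is exactly what choosing $P_n$ guarantees.

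The real issue is $p=1$. There your flat-block and sign-pairing ideas cannot succeed, because the proposition is false for $p=1$.
\begin{itemize}
\item By the Schur property, every weakly null sequence in $\ell_1$ is norm null. No normalized sequence, flat blocks included, is weakly null, so the set defining $\beta(T)$ is empty for every $T$. Yet $\|\Id_{\ell_1}\|_{\mathrm{Calkin}}=1$.
\item The rank-one functional $Tx=\sum_k x_k$ has $\alpha(T)=\|T\|_{\mathrm{Calkin}}=0$ (take $Z=\ker T$). But $|Te_n|=1$ for all $n$, so $\gamma(T)=1$. Here any vector your pairing places in $Z$ is annihilated by $T$.
\item Even $\alpha=\|\cdot\|_{\mathrm{Calkin}}$ fails. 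Take the formal identity $T:\ell_1\to c_0$. Every $x$ with $\sum_k x_k=0$ satisfies $\|x\|_\infty\le\|x\|_1/2$, so $\alpha(T)\le 1/2$. On the other hand, $\|T-K\|\ge1$ for every compact $K$: take a norm-convergent subsequence of $(Ke_k)$ and look at the $k$-th coordinate. This also refutes the paper's remark that $\alpha=\|\cdot\|_{\mathrm{Calkin}}$ holds for all operators between Banach spaces.
\end{itemize}
So the statement must be restricted to $1<p<\infty$. The paper's proof tacitly uses this restriction as well: finitely supported vectors are not dense in $\ell_\infty$, and normalized successive blocks in $\ell_1$ are not weakly null. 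The proposition is only applied later with $1<p<\infty$. You should state the restriction explicitly rather than try to rescue $p=1$.
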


\begin{proof} The equality $\alpha(T)=  \|T\|_{\mathrm{Calkin}}$ is true for any operator $T: X \to Y$ between Banach spaces. In fact, let $K: X \to Y$ be compact and $\epsilon >0$. Then there exist $y_1, \ldots , y_n$ in $Y$ such that $\min_k  \|Kx-y_k\| < \epsilon$ for every $x\in B_X$. Consider normalized $y_k^*\in Y^*$ such that $y_k^*(y_k)=\|y_k\|$, and let $Z = \cap_{k=1}^n \ker K^*y^*_k$. Now fix $x\in B_Z$, and let $y_k$ such that $\|Kx-y_k\| <\epsilon$. It follows that $\|y_k\| = \| y_k^*( y_k - Kx) \|<\epsilon$. Hence, $\|Tx\|\leq \|Tx-Kx\| + \|Kx-y_k\|+\|y_k\|<\|T-K\|+2\epsilon$. We conclude that $\alpha(T) \ \leq  \|T\|_{\mathrm{Calkin}}$.  Now if $Z\subseteq X$ has finite codimension, let $X=Z\oplus F$, where $F$ has finite dimension. Consider $P: X \to F$ a projection, then $K=TP$ is compact and $\|T\|_{\mathrm{Calkin}} \leq \|T-K\|=\|T|_Z\|$.

To establish that $\gamma(T)= \alpha(T) $, let $Z$ be a finite codimensional subspace of $\ell_p$. There exist $f_1, \ldots, f_n \in \ell_p^* = \ell_q$ such that $Z= \cap_{i=1}^n \ker f_i$. By density, for every $\epsilon >0$,  we obtain finite supported vectors $g_1, \ldots , g_n$ such that $\|f_i-g_i\|$ is small, set $W =  \cap_{i=1}^n \ker g_i$. Then $X_k \subseteq W$ for some $k$. A perturbation estimates gives $\|T_{|X_k}\| \leq \| T_{|W}\| \leq \|T_Z\| + \epsilon$. It follows that  $\lim_n \|T_{|X_n}\| \leq \alpha(T) $.

Now we prove that $\beta(T) = \|T\|_{\mathrm{Calkin}}$. Fix $(w_n)$ a weakly null sequence in $\ell_p$ with  $\lim_{\U} \|w_n\| = 1$. For every $K\in K(\ell_p, Y)$, $\lim_{\U} \| Tw_n\| =\lim_{\U} \| (T-K)w_n\| \leq\|T-K\|$. Therefore $\beta(T) \leq \|T\|_{\mathrm{Calkin}}$. For the other inequality,  fix $\epsilon> 0$ and let $w_n \in X_n$ normalized with finite support such that $\|T_{|X_n}\|-\epsilon < \| Tw_n\|$. Passing to a subsequence, we may assume that $(w_n)$ are successive, and then $\lim_n \|T_{|X_n}\| \leq \lim_{\U} \|Tw_n\| $.
 
 \end{proof}

The following extends Proposition \ref{equals}

\begin{proposition}\label{rhoequalnorm} Let $Y$ be a singular self-extension of $\ell_p, 1<p<\infty$. Let $T \in L(\ell_p,Y)$. Then $$\rho_S(T)=\|T\|_S=\|T\|_{\mathrm{Calkin}}.$$
\end{proposition}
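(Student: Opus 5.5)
By Proposition \ref{seminorm}(a) (whose proof works verbatim for operators in $L(\ell_p,Y)$) we have $\rho_s(T)\le \|T\|_S\le \|T\|_{\mathrm{Calkin}}$. So the plan is to prove the single reverse inequality $\|T\|_{\mathrm{Calkin}}\le \rho_s(T)$. I would copy the structure of the proof of Proposition \ref{equals}, replacing the lattice structure of $\ell_p$ in the target by an asymptotic $\ell_p$ estimate in the self-extension $Y$.

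Fix $\varepsilon<\|T\|_{\mathrm{Calkin}}$. By Proposition \ref{equalities}(c), $\|T_{|X_n}\|>\varepsilon$ for every $n$, where $X_n=[e_k:k\ge n]$. Hence we can choose normalized, finitely supported, successive blocks $w_n$ with $\|Tw_n\|>\varepsilon$. Since $(w_n)$ is weakly null and $T$ is bounded, $(Tw_n)$ is weakly null in $Y$. Let $i:\ell_p\to Y$ and $q:Y\to\ell_p$ denote the embedding and the quotient map of the exact sequence.

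The key step is to pass to a subsequence such that $(Tw_n)$ is dominated from below by the $\ell_p$ basis. Concretely, we want
\[
\Bigl\|\sum_n \lambda_n Tw_n\Bigr\|\ge c\,\varepsilon\,\Bigl(\sum_n|\lambda_n|^p\Bigr)^{1/p}
\]
with $c$ as close to $1$ as we like. Granting this, every normalized $w=\sum\lambda_nw_n\in[w_n]$ satisfies $\|Tw\|\ge c\varepsilon$. Thus $T$ is not $c'\varepsilon$-singular for $c'<c$, which gives $\rho_s(T)\ge\varepsilon$. Letting $\varepsilon$ increase to $\|T\|_{\mathrm{Calkin}}$ finishes the proof.

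To obtain the lower estimate I would split into two cases, after passing to a subsequence so that $\lim\|qTw_n\|$ exists.

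\emph{Case 1: $\|qTw_n\|\not\to0$.} Then $(qTw_n)$ is a weakly null, seminormalized sequence in $\ell_p$. After perturbation and passing to a subsequence it is a block sequence, and hence is lower $\ell_p$ with constant $\inf\|qTw_n\|$. Since $\|q\|\le1$ (in the quasi-norm of the twisted sum), this gives a lower $\ell_p$ estimate for $(Tw_n)$. However, the constant is only $\inf_n\|qTw_n\|$, not $\varepsilon$.

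\emph{Case 2: $\|qTw_n\|\to0$.} By the standard perturbation lemma (as used in Proposition \ref{restriction}), $Tw_n$ is close to vectors $i(u_n)$. Since $i$ is an isometry onto $i(\ell_p)$, the sequence $(u_n)$ is weakly null with $\|u_n\|\to\lim\|Tw_n\|\ge\varepsilon$, and its block subsequence gives the $\ell_p$ lower estimate with constant $\approx\varepsilon$.

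I expect the main obstacle to be Case 1 and the general mixed situation, where the norm of $Tw_n$ is split between the $\ell_p$ part and the quotient part. The twisted quasi-norm $\|y-\Omega z\|+\|z\|$ is not lattice-like, so the constants from the two parts do not simply add.

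What I would try first is to use singularity instead of a direct estimate. Consider the subspace $W=[w_n]$ and the operator $T_{|W}$. If $T$ were $\delta$-singular with $\delta<\varepsilon$, we could apply Proposition \ref{restriction}-type arguments, and the fact that singular self-extensions $Y$ of $\ell_p$ are asymptotically $\ell_p$ on weakly null sequences: for Kalton–Peck type spaces $Y=\ell_p\oplus_\Omega\ell_p$, disjointly supported blocks $(a_n,b_n)$ satisfy $\|\sum(a_n,b_n)\|^p\approx\sum\|(a_n,b_n)\|^p$ up to $\log$-type errors that vanish along fast-growing subsequences. The intended route is therefore to first establish, by a gliding hump argument, that every normalized weakly null sequence in $Y$ has a subsequence that is $(1+\eta)$-lower $\ell_p$. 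This would handle both cases uniformly, and it is the step where the singularity of $\Omega$ (and possibly a specific form of $\Omega$) must really be used.
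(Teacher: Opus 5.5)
Your reduction to $\|T\|_{\mathrm{Calkin}}\le\rho_s(T)$ matches the paper's argument. So do the choice of normalized successive blocks $w_n$ with $\|Tw_n\|>\varepsilon$ and your Case 2; the paper phrases Case 2 as a compact perturbation of $T$ after which $Tw_n=v_n\in\ell_p$ are successive, which does not change $\rho_s$.

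The gap is that you never actually use the hypothesis that $Y$ is a \emph{singular} extension, and so Case 1 is left open. The missing observation is short. The quotient map $q$ is strictly singular, so $qT\in L(\ell_p)$ is strictly singular, hence compact. Therefore $\|qTw_n\|=d(Tw_n,i(\ell_p))\to 0$ along the weakly null sequence $(w_n)$. So Case 1, and any ``mixed'' behaviour, cannot occur at all, and your Case 2 gives $\rho_s(T)\ge\varepsilon$ directly.

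The route you sketch for Case 1 would not work, because the lemma it rests on is false. Already in $Z_p$, with its quasi-norm $\|(y,z)\|=\|y-\Omega_p z\|+\|z\|$, a normalized weakly null sequence need not have a $(1+\eta)$-lower $\ell_p$ subsequence. Fix an integer $N\ge 2$, put $\gamma=\frac1p\log N$ and $x_n=(\gamma e_n,e_n)$. Since $\Omega_p e_n=0$, we get $\|x_n\|=1+\gamma$. For every $A\subset\N$ with $|A|=N$ we have $\Omega_p\bigl(\sum_{n\in A}e_n\bigr)=\gamma\sum_{n\in A}e_n$, hence $\bigl\|\sum_{n\in A}x_n\bigr\|=N^{1/p}$. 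Thus every subsequence of the weakly null normalized sequence $(x_n/(1+\gamma))$ has lower $\ell_p$ constant at most $(1+\gamma)^{-1}<1$, and the $\log$-type error does not vanish along any subsequence. The near-optimal lower estimate for $(Tw_n)$ holds only because these vectors are asymptotically in $i(\ell_p)$, which is exactly what compactness of $qT$ provides.
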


\begin{proof}
If $\varepsilon< \|T\|_{\mathrm{Calkin}}  = \lim_n \|T_{[e_k, k \geq n]}\|$, then we can find $w_n$ normalized successive 
with $\|Tw_n\| \geq \varepsilon$ for every $n$. Let $W=[w_n]_n$.
Since $w_n$ tend weakly to $0$ and $\pi T \in L(\ell_p)$ is compact, we have $\pi Tw_n=d(Tw_n,\ell_p)$ tending to $0$. So passing to a subsequence and
up to a compact perturbation on $T$, we may assume that $Tw_n=v_n$ belongs to $\ell_p$, and then that $v_n$ is successive. 
Then for any $w=\sum_i \lambda_i w_i$ in $S_W$, (i.e. $\lambda_i$'s are an $\ell_p$-norm one sequence), $Tw=\sum_i \lambda_i v_i$ will be of norm at least $\varepsilon$.
So we have that $\rho_s(T) \geq \varepsilon$.
\end{proof}

\begin{corollary}\label{ssss} Let $Y$ be a singular self-extension of $\ell_p, 1<p<\infty$. Let $T \in L(Y)$. Then $\rho_s(T)=\|T\|_S=\|T_{|\ell_p}\|_{\mathrm{Calkin}}$. In particular, $T$ is strictly singular if and only if $T_{|\ell_p}$ is compact.
\end{corollary}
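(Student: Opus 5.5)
The plan is to combine Proposition \ref{restriction} with Proposition \ref{rhoequalnorm}. Let $Y=\ell_p\oplus_\Omega\ell_p$ be a singular self-extension of $\ell_p$ with inclusion $i:\ell_p\to Y$, and let $T\in L(Y)$. Applying Proposition \ref{restriction} with $X=\ell_p$ and $Z=Y$ gives
\[
\rho_s(T)=\rho_s(T_{|\ell_p})\quad\text{and}\quad \|T\|_S=\|T_{|\ell_p}\|_S,
\]
where $T_{|\ell_p}=Ti\in L(\ell_p,Y)$. Proposition \ref{rhoequalnorm}, applied to the operator $T_{|\ell_p}\in L(\ell_p,Y)$, then gives
\[
\rho_s(T_{|\ell_p})=\|T_{|\ell_p}\|_S=\|T_{|\ell_p}\|_{\mathrm{Calkin}}.
\]
Chaining these equalities yields $\rho_s(T)=\|T\|_S=\|T_{|\ell_p}\|_{\mathrm{Calkin}}$.

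For the ``in particular'' part, recall that $T$ is strictly singular if and only if it is $\varepsilon$-singular for every $\varepsilon>0$, that is, if and only if $\rho_s(T)=0$. By the equality just proved, this holds exactly when $\|T_{|\ell_p}\|_{\mathrm{Calkin}}=0$. The Calkin quotient norm is the distance to the closed subspace $K(\ell_p,Y)$ of compact operators, so it vanishes exactly when $T_{|\ell_p}$ is compact.

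I see no real obstacle. The only point to check is that the hypotheses line up: Proposition \ref{restriction} requires a singular twisted sum, which is the standing assumption here, and it allows an arbitrary target space, in particular $Y$ itself. Proposition \ref{rhoequalnorm} is stated for operators from $\ell_p$ into a singular self-extension $Y$ of $\ell_p$ with $1<p<\infty$, which is exactly the form of $T_{|\ell_p}$.
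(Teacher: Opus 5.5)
Your proof is correct and is exactly the paper's argument: chain Proposition \ref{restriction} (applied with the subspace $\ell_p$ and target $Y$) with Proposition \ref{rhoequalnorm} applied to $T_{|\ell_p}$. Your derivation of the ``in particular'' part, via $\rho_s(T)=0$ and the closedness of $K(\ell_p,Y)$, is the natural one; the paper leaves it implicit.
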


\begin{proof} This follows as a consequence of Proposition \ref{restriction} and Proposition \ref{rhoequalnorm}. \end{proof}

As an example, if $i$ denotes the embedding of $\ell_p$ inside $Y$ and $\pi$ the quotient map, then $T:=i\pi: Y \rightarrow Y$ is strictly singular non compact, but $T_{|\ell_p}=0$.

\

When $Y=Z_2$, Corollary \ref{ssss} was a well-known result, see \cite{CGP} for another proof as well as extensions to the higher dimensional Rochberg spaces; the same argument holds also in $Z_p, 1<p<\infty$:

\begin{corollary}[\cite{CGP}]  Let $T \in L(Z_p), 1<p<\infty$. Then 
$T$ is strictly singular if and only if
$T_{|\ell_p}$ is compact. 
\end{corollary}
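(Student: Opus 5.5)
The plan is to derive the statement directly from Corollary \ref{ssss}, since $Z_p$ is a singular self-extension of $\ell_p$ for $1<p<\infty$. Concretely, $Z_p=\ell_p\oplus_{\Omega_p}\ell_p$ is a twisted sum of $\ell_p$ with itself, and by Kalton--Peck \cite[Theorem 6.4]{KaltonPeck} the quasi-linear map $\Omega_p$ is singular. Hence the quotient map $\pi:Z_p\to\ell_p$ is strictly singular. Since $1<p<\infty$, the quasi-norm on $Z_p$ is equivalent to a norm, so we may work with a Banach space. Strict singularity, compactness and the equalities below are all unaffected by passing to an equivalent norm.

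Given $T\in L(Z_p)$, Corollary \ref{ssss} yields
\[
\|T\|_S=\|T_{|\ell_p}\|_{\mathrm{Calkin}},
\]
where $T_{|\ell_p}\in L(\ell_p,Z_p)$. By Proposition \ref{seminorm}(b),(d), $T$ is strictly singular if and only if it is $\varepsilon$-strongly singular for every $\varepsilon>0$, that is, if and only if $\|T\|_S=0$. On the other side, $\|T_{|\ell_p}\|_{\mathrm{Calkin}}=0$ holds exactly when $T_{|\ell_p}$ lies in the closure of the compact operators. The compact operators form a closed set, so this is equivalent to $T_{|\ell_p}$ being compact. Combining the two equivalences gives the claim.

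The only step needing any care is checking the hypotheses of Corollary \ref{ssss}. We need singularity of $Z_p$ in the sense used in Proposition \ref{restriction}, and we need Proposition \ref{rhoequalnorm}, which relies on $\pi T_{|\ell_p}$ being compact, i.e.\ on every operator $\ell_p\to\ell_p$ that is strictly singular being compact. The latter is exactly the classical fact recalled before Proposition \ref{equals}. So the main obstacle is bookkeeping rather than substance: invoking \cite[Theorem 6.4]{KaltonPeck} for singularity, and the renorming remark \cite[Proposition 3.4.5]{CabelloCastillo} so that the Banach-space results of Sections 2 and 4 apply.
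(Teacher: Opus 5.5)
Your proposal is correct and follows the paper's own route: singularity of $\Omega_p$ for $1<p<\infty$ (Kalton--Peck) makes $Z_p$ a singular self-extension of $\ell_p$, so Corollary \ref{ssss} gives $\|T\|_S=\|T_{|\ell_p}\|_{\mathrm{Calkin}}$, and the two sides vanish exactly when $T$ is strictly singular, respectively when $T_{|\ell_p}$ is compact. (A small citation point: the equivalence between strict singularity and being $\varepsilon$-strongly singular for every $\varepsilon>0$ is the remark made just before Proposition \ref{seminorm}, rather than items (b),(d) themselves.)
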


\begin{remark} It follows that if
     $Y$ is a singular self-extension of $\ell_p$, and $A \in L(Y)$ with $\|A\|_S <\varepsilon$, then $A_{|\ell_p}=F+\varepsilon R$, where $F$ has finite rank and $\|R\| \leq 1$. \end{remark}

Indeed from $\|A\|_S=\|A_{|\ell_p}\|_S < \varepsilon$, we have $\|A_{\ell_p}\|_{\mathrm{Calkin}} < \varepsilon$, therefore $A_{|\ell_p}=F+\varepsilon R$ with $\|R\| \leq 1$.

\begin{corollary} \label{isomr} Let $Y$ be a singular self-extension of $\ell_p$, $1<p<\infty$. We consider the restriction map $r_p: L(Y) \to L(\ell_p, Y)$. Then the following is an isometric embedding induced by $r_p$:
 $$\widetilde{r_p}:  (L(Y)/S(Y),\|.\|_S) \rightarrow (L(\ell_p,Y)/ K(\ell_p,Y),\|.\|_{\mathrm{Calkin}})$$
$$\widetilde{T} \mapsto \widetilde{T_{|\ell_p}}$$
\end{corollary}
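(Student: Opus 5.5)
The plan is to check three things in turn: that the map is well defined, that it is isometric (hence injective), and that it is multiplicative where that makes sense. The corollary will then follow by composing two earlier results.

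For $T \in L(Y)$, Corollary \ref{ssss} gives
$$\|T\|_S=\|T_{|\ell_p}\|_{\mathrm{Calkin}},$$
where the right-hand side is computed in $L(\ell_p,Y)/K(\ell_p,Y)$. This single identity does most of the work.

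\emph{Well-definedness.} Suppose $T,T'\in L(Y)$ have the same class in $L(Y)/S(Y)$, so that $T-T'$ is strictly singular. Then $\|T-T'\|_S=0$. By Corollary \ref{ssss}, $\|(T-T')_{|\ell_p}\|_{\mathrm{Calkin}}=0$. The compact operators $K(\ell_p,Y)$ form a closed subspace, so $(T-T')_{|\ell_p}$ is compact. Hence $T_{|\ell_p}$ and $T'_{|\ell_p}$ define the same class in $L(\ell_p,Y)/K(\ell_p,Y)$, and $\widetilde{r_p}$ is well defined. Linearity is clear, since restriction is linear.

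\emph{Isometry.} Let $\widetilde T$ be a class in $L(Y)/S(Y)$. The norm $\|\widetilde T\|_S$ is $\|T\|_S$ for any representative $T$; this is independent of the representative by Proposition \ref{seminorm}(b). Applying Corollary \ref{ssss} once more,
$$\|\widetilde{r_p}(\widetilde T)\|_{\mathrm{Calkin}}=\|T_{|\ell_p}\|_{\mathrm{Calkin}}=\|T\|_S=\|\widetilde T\|_S .$$
So $\widetilde{r_p}$ is isometric. In particular it is injective, because $\|.\|_S$ is a genuine norm on the quotient by Proposition \ref{seminorm}(d).

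There is no substantial obstacle: everything reduces to Corollary \ref{ssss}, which itself combines Proposition \ref{restriction} (singularity of the self-extension) with Proposition \ref{rhoequalnorm}. The one point needing care is to make explicit that the target is $L(\ell_p,Y)/K(\ell_p,Y)$ rather than $L(\ell_p,Y)/S(\ell_p,Y)$. This matters in the well-definedness step, where compactness of the restriction, not just strict singularity, is what the Calkin-norm identity delivers.

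It may also be worth noting the module structure. For $A,T\in L(Y)$ we have $(AT)_{|\ell_p}=A\,T_{|\ell_p}$. Thus $\widetilde{r_p}$ intertwines left multiplication by $L(Y)/S(Y)$, which acts on $L(\ell_p,Y)/K(\ell_p,Y)$ because $K(\ell_p,Y)$ is a left ideal under composition with $L(Y)$. This is not needed for the isometric statement itself.
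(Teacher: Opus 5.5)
Your proof is correct and is essentially the paper's argument: the paper states this corollary without proof right after Corollary \ref{ssss}, and it follows from the identity $\|T\|_S=\|T_{|\ell_p}\|_{\mathrm{Calkin}}$. As you show, that identity yields both well-definedness (strictly singular operators restrict to compact ones) and the isometry.
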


\subsection{Representation of operators on singular self-extensions of $\ell_p$}

We inspire ourselves from  the construction of the Hilbert space $H$ in \cite{kaltonasymplectic} to define a simpler version of the map of Kalton-Swanson, valid for all values of $1<p<\infty$:

\begin{definition}
Let $1 \leq p <\infty$ and $W_p \subseteq (\ell_p)_{\mathcal U}$ be defined by 
$$\widetilde{(x_n)_n} \in W_p \Leftrightarrow
w-\lim x_n=0.$$
\end{definition}
The space $W_p$ is a closed subspace of $(\ell_p)_{\mathcal U}$, and therefore in particular $W:=W_2$ is a Hilbert space.

\begin{proposition} \label{normbeta} Let $Y$ be a singular self-extension of $\ell_p, 1<p<\infty$. 
Let $\alpha_p: L(\ell_p,Y) \rightarrow L(W_p)$ be defined by
$$\alpha_p(T)(\widetilde{(x_n)_n})=\widetilde{(Tx_n)_n}
\in Y_{\mathcal U},$$ 
and $\beta_p=\alpha_p \circ r_p: L(Y) \rightarrow L(W_p)$, i.e. 
$$\beta_p(T)(\widetilde{(x_n)_n})=\widetilde{(Tx_n)_n}
\in Y_{\mathcal U},$$

Then
\begin{enumerate}[(a)]
    \item $\alpha_p$ and $\beta_p$ are well-defined, i.e. for all $T: \ell_p \rightarrow Y$, $\alpha_p(T)$  takes values in $W_p \subseteq (\ell_p)_{\mathcal U} \subseteq Y_{\mathcal U}$.
    \item  $\alpha_p$ induces an isometric embedding $\widetilde{\alpha_p}$ of
$(L(\ell_p,Y)/K(\ell_p,Y), \|.\|_{\mathrm{Calkin}})$ into $L(W_p)$.
\item  $\beta_p$ induces an isometric embedding $\widetilde{\beta_p}=\widetilde{\alpha_p} \circ \widetilde{r_p}$ of $(L(Y)/S(Y),\|.\|_S)$ into $L(W_p)$.
\end{enumerate}

\begin{proof}
$(a)$ It is clear that $Tx_n$ tends weakly to $0$ in $Y$. If $\pi$ denotes the quotient map then $\pi T \in L(\ell_p)$ is strictly singular, therefore compact, so $\pi Tx_n$ tends in norm to $0$. Let therefore $y_n \in \ell_p$ be such that $d(Tx_n,y_n)$ tends to $0$. We have
$\widetilde{(Tx_n)_n}=\widetilde{(y_n)_n}$ belonging to $W_p$.
$(b)$ We have
$\|\alpha(T)\|=\|T\|_{\mathrm{Calkin}}$ by Proposition \ref{equalities}.
$(c)$ It follows from $(b)$ and Corollary \ref{isomr}.

\end{proof}

\end{proposition}

\begin{lemma}\label{Zp}Let $1\leq p <\infty$, let $T \in L(Z_p)$, and let $(x_n)$ be a sequence in $\ell_p$ tending weakly to $0$.  Then
$$\lim_{n \in \N} \|i \pi T(\Omega_p x_n,x_n)-Tx_n\|=0.$$

\end{lemma}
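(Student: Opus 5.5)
The idea is to read the statement through the identity $i\pi(\Omega_p x,x)=(x,0)$. The map $(y,z)\mapsto(z,0)$ equals $i\pi$ and is bounded on $Z_p$, and $u_n:=(\Omega_p x_n,x_n)$ satisfies $\|u_n\|=\|x_n\|$. Writing $Tx_n$ for $T(x_n,0)$, the quantity to control is
$$D_n:=i\pi T u_n-T\,i\pi u_n=[\,i\pi,T\,]u_n .$$
So the lemma says that the commutator of $T$ with $i\pi$ vanishes asymptotically along the vectors $u_n$ attached to a weakly null sequence of $\ell_p$.

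\emph{Step 1: reductions.} Everything is linear in $(x_n)$, so I may assume $(x_n)$ is bounded. Arguing by contradiction, suppose $\|D_n\|\ge\delta$ along a subsequence. By a small perturbation and passing to a further subsequence, I take the $x_n$ to be finitely supported successive blocks, normalized in $\ell_p$.

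\emph{Step 2: $D_n$ lies essentially in $\ell_p$.} Since $\pi\, i\pi=0$, we get $\pi D_n=-\pi T(x_n,0)$. Now $\pi T i\in L(\ell_p)$ is strictly singular, because $\Omega_p$ is singular (this is the argument in the proof of Proposition~\ref{normbeta}(a)). Hence it is compact, and $\pi D_n\to 0$ in norm. So $D_n$ is within $o(1)$ of some $d_n\in\ell_p$. The sequence $(d_n)$ is weakly null, and after passing to a subsequence and perturbing I may take the $d_n$ successive in $\ell_p$. Since $\ell_p$ sits isometrically in $Z_p$, this gives $\bigl\|\sum_{n\in A}D_n\bigr\|\gtrsim \delta N^{1/p}$ for every $N$-element set $A$.

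\emph{Step 3: the upper bound from the Kalton--Peck map.} For $N$ disjoint normalized vectors, the definition of $\Omega_p$ gives the exact formula
$$\Omega_p\Bigl(\sum_{n\in A}x_n\Bigr)=\sum_{n\in A}\Omega_p x_n+\tfrac1p\log N\sum_{n\in A}x_n .$$
Put $v_A=\sum_{A}u_n$, $w_A=(\Omega_p(\sum_A x_n),\sum_A x_n)$ and $c=\tfrac1p\log N$. Then $v_A=w_A-c\,(\sum_A x_n,0)$ and $\|w_A\|=N^{1/p}$. Expanding $\sum_A D_n=i\pi T v_A-T(\sum_A x_n,0)$ through this identity, I will bound its norm. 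The bound uses $\|Tw_A\|,\|i\pi T w_A\|\le\|T\|N^{1/p}$. It also uses the compactness of $\pi T i$ on the successive blocks, which gives $\|\pi T(\sum_A x_n,0)\|=o(N^{1/p})$.

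\emph{Main obstacle.} The goal is an estimate $\|\sum_A D_n\|=o(N^{1/p})$, which would contradict Step 2. The difficulty is that the term $c\,T(\sum_A x_n,0)$ carries the large factor $\log N$, and the compactness gain $o(N^{1/p})$ has to beat it. To handle this I would choose the blocks adaptively: make $\|\pi T(x_n,0)\|$, and the tails of $T(x_n,0)$ off a fixed finite support, decay fast enough (say summably faster than $1/\log n$). I would also choose $N$ and the sets $A$ according to that decay. If the direct estimate still loses the $\log N$, I would instead use weighted sums $\sum_A\lambda_n x_n$ with $\ell_p$-normalized weights. The formula for $\Omega_p$ then yields $\sum\lambda_n\log|\lambda_n|\,x_n$ correction terms, and for a suitable choice of weights these should isolate the $D_n$ without the logarithmic loss.
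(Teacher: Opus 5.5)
Your Steps 1--2 are fine in spirit, but Step 3 cannot produce $\|\sum_{n\in A}D_n\|=o(N^{1/p})$, and the obstacle you single out is not the real one.

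The term $c\,i\pi T(\xi_A,0)$, where $\xi_A=\sum_{n\in A}x_n$, equals $c\,i\bigl(\sum_{A}\pi T i x_n\bigr)$. It is $O(1)$ once $\sum_n\|\pi Tix_n\|<\infty$ and $A$ lies far out.

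The trouble is the rest. Since $i\pi w_A=(\xi_A,0)$ and $\pi(\xi_A,0)=0$, your expansion is exactly
\[
\sum_{n\in A}D(x_n)=N^{1/p}\,D\bigl(N^{-1/p}\xi_A\bigr)-c\,i\pi T i\,\xi_A ,\qquad D(x):=[\,i\pi,T\,](\Omega_p x,x).
\]
So it only says that $D$ is approximately linear on blocks: it expresses $\sum_A D(x_n)$ through $D$ at another normalized block. Bounding $\|i\pi Tw_A\|$ and $\|T(\xi_A,0)\|$ separately by $\|T\|N^{1/p}$ gives $\|\sum_A D_n\|\lesssim 2\|T\|N^{1/p}$. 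That is the same order as your lower bound $\delta N^{1/p}$, so there is no contradiction. Adaptive blocks or weights do not change this. With $\|\lambda\|_p=1$ the same computation gives $\sum_A\lambda_n D(x_n)=D\bigl(\sum_A\lambda_n x_n\bigr)-i\pi Ti\bigl(\sum_A\lambda_n\log(1/|\lambda_n|)x_n\bigr)$, which is again circular.

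The missing idea is to use the twisted quasi-norm of $Tw_A$, which couples its two coordinates through $\Omega_p$, rather than the norms of its pieces. For instance, perturb so that $Tu_n=(a_n,z_n)$ and $T(x_n,0)\approx(y_n,0)$ are supported on common successive blocks. Then
\[
Tw_A=Tv_A+c\,T(\xi_A,0)\approx\Bigl(\sum_A a_n+c\sum_A y_n,\ \sum_A z_n\Bigr),
\]
while $\Omega_p(\sum_A z_n)=\sum_A\Omega_p z_n+c\sum_A z_n+E_A$ with $\|E_A\|=O(N^{1/p})$. Since $\|a_n-\Omega_p z_n\|\le\|T\|$, the bound $\|Tw_A\|\le\|T\|N^{1/p}$ forces $\tfrac1p\log N\,\bigl\|\sum_A(y_n-z_n)\bigr\|=O(N^{1/p})$. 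This contradicts Step 2, because $D_n\approx i(z_n-y_n)$. Here $\log N$ works for you, not against you.

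The paper gets the same information differently. It uses the Kalton--Swanson block isometries $W,U$ attached to $(x_n)$ and $(y_n)$. Then $TW-U$ is a compact perturbation of an operator vanishing on $\ell_p$, hence factoring through $\pi$, hence strictly singular. Kalton--Peck's theorem that $(0,e_n)$ spans the Orlicz space $\ell_f$, $f(t)=|t\log t|^p$, then yields $\|\pi(TW-U)(0,e_n)\|=\|\pi Tu_n-y_n\|\to0$.

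Finally, your Step 1 reduction to successive blocks needs an argument. The map $x\mapsto(\Omega_p x,x)$ is not linear (contrary to your remark), nor obviously Lipschitz. The paper handles this by splitting $x_n=w_n+\delta_n f_n$ with disjoint supports and computing $\Omega_p$ explicitly.
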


\begin{proof} 
We may first consider the case in which  $(x_n)$ is successive. Since $Z_p$ is singular, the operator $\pi T_{|\ell_p} : \ell_p \to \ell_p$ is strictly singular and hence compact. Therefore, $\lim \| \pi T x_n\| \to 0$. By passing to subsequences, we may assume that $\sum \|Tx_n - y_n\| < \infty$ for some semi-normalized successive sequence $(y_n)$ in $\ell_p$. Let $W$ and $U$ be the block operators associated with the sequences $(x_n)$ and $(y_n)$, respectively, as defined in \cite{kaltonasymplectic}, p.~389. While the construction is presented for $Z_2$, the same argument shows that these operators define isometries on $Z_p$ with respect to the quasi-norm. Define $K: Z_p \to Z_p$ by  $Ke_n= Tx_n - y_n$ and $K(0,e_n)= 0$. Then $K$ is compact and $(TW- U - K)_{|\ell_p}=0$; it follows that 
$TW- U - K$ factors through $\pi$ and is therefore strictly singular, and hence $TW-U$ is strictly singular as well. By  \cite[Theorem 5.4]{KaltonPeck} (see also \cite[Lemma 10.8.3]{CabelloCastillo}), we have $\lim \|\pi (TW-U)(0,e_n)\|=0$, for otherwise there would exist a subsequence  of $ (TW-U)(0,e_n)$ equivalent to the usual basis of the Orlicz sequence space $\ell_f$, where $f(t)= |t(-\log t)|^p$  in a neighborhood of 0. On the other hand, $$\| i\pi(TW-U)(0,e_n)\|_p= \| |\pi (TW-U)(0,e_n) \|= \|i\pi T(\Omega_p x_n, x_n) - y_n\|,$$ which establishes the result for this case.

For the general case where $(x_n)$ is weakly null, we use the following perturbation argument: by passing to a subsequence we may assume that  
$x_n = w_n + \delta_n f_n$,  for every  $n\in \N$, where $(w_n)$ and $(f_n)$ are normalized sequences such that  $(w_n)_n$ is successive,  $\mathrm{supp} w_n \cap \mathrm{supp} f_n = \emptyset$, and $\lim_{n \to \infty} \delta_n = 0$.  
Then we have  
\[
\|(\Omega_p x_n, x_n) - (\Omega_p w_n, w_n)\|
  = \|\Omega_p(w_n + \delta_n f_n) - \Omega_p w_n - \Omega_p(\delta_n f_n)\| + |\delta_n|.
\]
Moreover,  
\[
\|\Omega_p(w_n + \delta_n f_n) - \Omega_p w_n - \Omega_p(\delta_n f_n)\|
= 
\Big\| 
\log\!\left(\frac{1}{(1+\delta_n^p)^{1/p}}\right) w_n
+ \delta_n \log\!\left(\frac{\delta_n}{(1+\delta_n^p)^{1/p}}\right) f_n
\Big\|_p +\, |\delta_n|=
\]
\[
\left(
\left( -\frac{1}{p}\log(1+\delta_n^p)\right)^p
+ \left(\delta_n \log\tfrac{\delta_n}{(1+\delta_n^p)^{1/p}}\right)^p
\right)^{1/p} +\, |\delta_n|.
\]
which tends to $0$ since $\delta_n \to 0$.
\end{proof}

 Let $1<p,q<\infty$ with $\frac{1}{p}+\frac{1}{q}=1$. If $Y$ is a singular self-extension of $\ell_p$,  $(y_n)_n$ is a sequence in $Y$ such that $d(y_n,\ell_p)$ tends to $0$, and $(z_n)_n$ is a sequence in $\ell_q$, then we define
$<(y_n)_n,(z_n)_n>$ in the obvious way: as 
$\lim_{\mathcal U}<r_n,z_n>$,  where $(r_n)_n$ is a sequence in  $\ell_p$ such that $d(r_n,y_n)$ tends to $0$.

\begin{proposition}\label{proppq}  Let $1<p,q<\infty$ with $\frac{1}{p}+\frac{1}{q}=1$. If $T \in L(Z_p)$, $(w_n)_n \in \ell_q$,
and $(w'_n)_n \in \ell_p$ are weakly null sequences, then
$$ <(T^*w_n)_n, (w'_n)_n>= <(w_n)_n, (Tw'_n)_n>.$$
\end{proposition}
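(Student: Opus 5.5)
The plan is to reduce both sides to the same value of the duality form $B$ through the adjoint identity $B(Tu,v)=B(u,T^*v)$. The inputs are Lemma \ref{Zp}, applied both to $T$ on $Z_p$ and to $T^*$ on $Z_q$, and the computation of $B$ on elements of the form $(a,0)$ and $(\Omega x,x)$. The obstruction is that $B$ vanishes identically on pairs of vectors from the canonical copies $\ell_p\times\{0\}$ and $\ell_q\times\{0\}$. Indeed $B((a,0),(b,0))=\langle 0,a\rangle-\langle b,0\rangle=0$. So the adjoint identity cannot be applied directly to $w'_n$ and $w_n$. One of the two vectors must be lifted to the "quotient direction" $(\Omega x,x)$, and Lemma \ref{Zp} makes this lift possible.

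\emph{Step 1 (lifting on the $T$ side).} Put $u_n=(\Omega_p w'_n,w'_n)\in Z_p$. Its quasi-norm is $\|w'_n\|_p$, so the sequence $(u_n)$ is bounded. By Lemma \ref{Zp}, $\|i\pi Tu_n-Tw'_n\|\to0$, so $(\pi Tu_n)_n$ is an admissible $\ell_p$-approximation of $(Tw'_n)_n$ in the definition of the bracket. Hence
$$\langle (w_n)_n,(Tw'_n)_n\rangle=\lim_{\mathcal U}\langle w_n,\pi Tu_n\rangle .$$
Writing $Tu_n=(a_n,b_n)$, so that $b_n=\pi Tu_n$, the definition of $B$ gives $B(Tu_n,(w_n,0))=-\langle w_n,b_n\rangle$. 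Applying the adjoint identity, the right-hand side equals $-\lim_{\mathcal U}B(u_n,T^*(w_n,0))$.

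\emph{Step 2 (lifting on the $T^*$ side).} Note that $T^*\in L(Z_q)$ and that $Z_q$ is again a singular self-extension. By Lemma \ref{Zp} with $v_n=(\Omega_q w_n,w_n)$, the vector $T^*(w_n,0)$ lies within $o(1)$ of $(c_n,0)$, where $c_n=\pi T^*v_n\in\ell_q$. In particular $(c_n)_n$ is an admissible approximation in the definition of $\langle (T^*w_n)_n,(w'_n)_n\rangle$. Since $(u_n)$ is bounded and $B$ is continuous, we may replace $T^*(w_n,0)$ by $(c_n,0)$ inside $B$ at the cost of an error tending to $0$. Then $B(u_n,(c_n,0))=-\langle c_n,w'_n\rangle$. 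Combining the two signs yields
$$\langle (w_n)_n,(Tw'_n)_n\rangle=\lim_{\mathcal U}\langle c_n,w'_n\rangle=\langle (T^*w_n)_n,(w'_n)_n\rangle .$$

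\emph{Main obstacle.} I expect the delicate points to be bookkeeping rather than conceptual. First, the sign and coordinate conventions for $B$ must be tracked carefully (which coordinate carries the $\ell_p$ subspace, and the convention that $i(y)=(y,0)$), so that the two minus signs really cancel. Second, the perturbation steps must be justified with respect to the quasi-norms. This only uses that the quasi-norms are equivalent to norms for $1<p<\infty$ and that $B$ is a bounded bilinear form, so an $o(1)$ perturbation paired against a bounded sequence contributes $o(1)$. Finally, one should check that the bracket is independent of the chosen approximating sequence; this holds because $(w_n)$ and $(w'_n)$ are bounded.
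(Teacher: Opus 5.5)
Your proposal is correct and is essentially the paper's argument with the roles of $T$ and $T^*$ interchanged: the paper applies Lemma \ref{Zp} to $T^*\in L(Z_q)$ to lift $w_n$ to $(\Omega_q w_n,w_n)$, then uses the adjoint identity, and on the other side only needs that $T(w'_n,0)$ is within $o(1)$ of $\ell_p$; you instead lift $w'_n$ on the $T$ side. Your second use of Lemma \ref{Zp} in Step 2 is more than required, since all you need there is $d(T^*(w_n,0),\ell_q)\to 0$, which already follows from compactness of $\pi T^*_{|\ell_q}$ and weak nullness of $(w_n)$.
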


\begin{proof}
By Lemma \ref{Zp} we know  $$<(T^*w_n)_n, (w'_n)_n>=\lim_{\mathcal U} <i\pi T^*(\Omega w_n, w_n), w'_n>.$$
Note the identity
$<i\pi (x,y), x'>=<y,x'>=-B( (x,y), (x',0) )$, therefore
$$<(T^*w_n)_n, (w'_n)_n>=-\lim_{\mathcal U} B( T^*(\Omega w_n,w_n), (w'_n,0))
= -\lim_{\mathcal U} B( (\Omega w_n, w_n), Tw'_n).$$ 
Picking $y_n \in \ell_p$ such that $d( Tw'_n, y_n)$ tends to $0$, this is equal to $$
-\lim_{\mathcal U} B( (\Omega w_n, w_n), (y_n,0))=\lim_{\mathcal U} <w_n, y_n>=<(w_n)_n, (Tw'_n)_n>.$$
 
Therefore $ <(T^*w_n)_n, (w'_n)_n>= <(w_n)_n, (Tw'_n)_n>$.
\end{proof}

We therefore obtain a more explicit version of Kalton-Swanson map $\Lambda$:
 \begin{proposition} Let $1<p,q<\infty$ with $\frac{1}{p}+\frac{1}{q}=1$. Then the map $$\beta_p: (L(Z_p),\|.\|_S)
 \rightarrow L(W_p),$$ defined by $$\beta_p(T)(\widetilde{(w_n)_n})=\widetilde{(Tw_n)_n}$$ is isometric. Furthermore for all $T \in L(Z_p)$,
 $$\beta_p(T)^*=\beta_q(T^*).$$\end{proposition}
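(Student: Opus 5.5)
The statement has two parts: that $\beta_p$ is isometric for $\|.\|_S$, and the adjoint identity $\beta_p(T)^*=\beta_q(T^*)$. I will treat them separately.

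\emph{Isometry.} Since $Z_p$ is a singular self-extension of $\ell_p$ for $1<p<\infty$ (Kalton--Peck, Theorem 6.4), this part is Proposition \ref{normbeta}(c) applied to $Y=Z_p$. There, $\widetilde{\beta_p}$ is an isometric embedding of $(L(Z_p)/S(Z_p),\|.\|_S)$ into $L(W_p)$. Composing with the quotient map $L(Z_p)\to L(Z_p)/S(Z_p)$ gives $\|\beta_p(T)\|=\|T\|_S$ for every $T\in L(Z_p)$.

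\emph{Adjoint identity.} First I fix what the adjoint of an element of $L(W_p)$ means. I will identify $W_q$ with the dual of $W_p$ through the pairing
$$\langle \widetilde{(w_n)_n},\widetilde{(w'_n)_n}\rangle=\lim_{\mathcal U}\langle w_n,w'_n\rangle, \qquad (w_n)\subset\ell_q,\ (w'_n)\subset\ell_p \text{ weakly null}.$$
This is a bounded bilinear form of norm at most one, and it is well defined on ultrapower classes. So $\beta_p(T)^*\in L(W_q)$ is understood as the unique operator $A$ satisfying $\langle A\xi,\eta\rangle=\langle \xi,\beta_p(T)\eta\rangle$ for all $\xi\in W_q$, $\eta\in W_p$. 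For this to make sense, the pairing should be nondegenerate, and ideally it should realize $W_q$ isometrically as $W_p^*$. For $p=2$ this is just the Hilbert space inner product, and the statement is about the Hilbert adjoint. For general $p$ I would either only claim the identity relative to the pairing, or argue duality by approximating weakly null sequences by block sequences and using the $\ell_p$--$\ell_q$ duality blockwise.

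With that in place, fix weakly null sequences $(w_n)\subset\ell_q$ and $(w'_n)\subset\ell_p$. By the well-definedness in Proposition \ref{normbeta}(a), the elements $\beta_q(T^*)\widetilde{(w_n)}$ and $\beta_p(T)\widetilde{(w'_n)}$ are represented by sequences in $\ell_q$ and $\ell_p$, namely sequences at distance tending to $0$ from $(T^*w_n)$ and $(Tw'_n)$. Hence the pairings appearing are exactly those defined before Proposition \ref{proppq}. That proposition then gives
$$\langle \beta_q(T^*)\xi,\eta\rangle=\langle (T^*w_n)_n,(w'_n)_n\rangle=\langle (w_n)_n,(Tw'_n)_n\rangle=\langle \xi,\beta_p(T)\eta\rangle,$$
where $\xi=\widetilde{(w_n)}$ and $\eta=\widetilde{(w'_n)}$. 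Every element of $W_q$ and $W_p$ has such a representative, so $\beta_q(T^*)$ satisfies the defining relation, and nondegeneracy gives $\beta_p(T)^*=\beta_q(T^*)$.

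\emph{Main obstacle.} The substantive content is already contained in Lemma \ref{Zp} and Proposition \ref{proppq}. What remains delicate is the identification of $W_q$ with $W_p^*$ via the ultralimit pairing, which is needed to give ``adjoint'' its meaning when $p\neq 2$. I expect this to be the main obstacle. If full duality is hard to establish, I will state the identity in the weaker form: $\beta_q(T^*)$ is the adjoint of $\beta_p(T)$ with respect to this pairing. That form is all that is needed in the case $p=2$.
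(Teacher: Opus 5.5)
Your proposal is correct and follows the paper's proof. The isometry is read off from Proposition \ref{normbeta}(c), and the adjoint identity comes from writing both sides as ultralimit pairings $\lim_{\mathcal U}\langle\cdot,\cdot\rangle$ and invoking Proposition \ref{proppq}. The paper takes the adjoint relative to this pairing without comment, so the duality issue you flag for $p\neq 2$ is not a gap, and it is easily settled: $(\ell_p)_{\mathcal U}=\ell_p\oplus_p W_p$ with the pairing block-diagonal, which identifies $W_q$ isometrically with $W_p^*$.
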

 \begin{proof} That $\beta_p$ is isometric was proved in Proposition \ref{normbeta}. If $w=\widetilde{(w_n)_n} \in W_q$ and 
 $w'=\widetilde{(w_n^{\prime})_n}\in W_p$, to check the *-homomorphic property we need to compute
 $$< \beta_q(T^*) \circ w,w'>
 =<\widetilde{(T^*w_n)_n},\widetilde{(w'_n)_n}>=\lim_{\U} <T^*w_n, w'_n>.$$
 On the other hand
 $$<   w, \beta_p (T) \circ w'>
 =<\widetilde{(w_n)_n},\widetilde{(Tw'_n)_n}>=\lim_{\U} <w_n, Tw'_n>.$$
 Therefore Proposition \ref{proppq} implies that $\beta_p(T)^*=\beta_q(T^*)$.
 \end{proof}

\begin{definition} We denote by $\beta$ the map $\beta_2$, i.e., $$\beta: (L(Z_2),\|.\|_S) \rightarrow L(W)$$ is the *-isometric map defined by 
$$\beta(T)(\widetilde{(w_n)_n)}=\widetilde{(Tw_n)_n}.$$
We denote by $\widetilde{\beta}$ the induced *-isometric embedding from
$(L(Z_2),\|.\|_S)$ into $L(W)$.
\end{definition} 

\subsection{Relation with the Kalton-Swanson map $\Lambda$}

Recall that  $\alpha: L(\ell_2,Z_2) \rightarrow L(W)$ is defined by
$\alpha(T)(\widetilde{(x_n)_n})=\widetilde{(Tx_n)_n}
\in W$ and that $r: Z_2 \to L(\ell_2, Z_2)$ denotes the restiction map, defined by $r(T)=T_{|\ell_2}$.
\begin{proposition}   
For each $T \in L(Z_2)$, define $j(T):=\widetilde{(T(e_n,0))_n}
\in W$. Let $\tilde{j}:H \rightarrow W$ denote the associated isometric embedding. Then for all $T \in L(Z_2)$,
$$\beta(T) \circ \tilde{j}=\tilde{j} \circ \Lambda(T)$$ 
\end{proposition}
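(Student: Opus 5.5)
The plan is to verify the identity on the dense subspace of $H$ spanned by the classes $\widetilde{U}$, $U\in L(Z_2)$. On such a class both sides are easy to compute, and the identity then extends to $H$ by continuity. The continuity comes from two facts: $\Lambda(T)$ and $\beta(T)$ are bounded operators, and $\tilde j$ is an isometric linear map.

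First I will make sure $j$ is well defined and induces $\tilde j$. Since $(e_n,0)$ is weakly null in $\ell_2\subseteq Z_2$, Proposition \ref{normbeta}(a) applies to the operator $U_{|\ell_2}\in L(\ell_2,Z_2)$. It gives that $\widetilde{(U(e_n,0))_n}$ is an element of $W$, namely the image under $\alpha(U_{|\ell_2})$ of the class of $(e_n)_n$, which lies in $W$.

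Next, the defining property of the Kalton--Swanson form, $(U,U)=\lim_{\U}\|U(e_n,0)\|^2$, gives $\|j(U)\|_W^2=(U,U)=\|\widetilde U\|_H^2$. So $j$ is a linear map that vanishes on $Y=\{U:(U,U)=0\}$. It therefore factors through $L(Z_2)/Y$ isometrically and extends to an isometric embedding $\tilde j:H\to W$. This is essentially the assertion that $\tilde j$ is isometric.

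The core computation is then, for $U\in L(Z_2)$,
$$\tilde j(\Lambda(T)\widetilde U)=\tilde j(\widetilde{TU})=\widetilde{(TU(e_n,0))_n},$$
$$\beta(T)(\tilde j(\widetilde U))=\beta(T)\bigl(\widetilde{(U(e_n,0))_n}\bigr).$$
The main (minor) obstacle is that $\beta(T)$ is defined on representatives $(w_n)_n$ that lie in $\ell_2$ and are weakly null, whereas $U(e_n,0)$ lies in $Z_2$, and only asymptotically in $\ell_2$.

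To handle this, I will choose $y_n\in\ell_2$ with $\|U(e_n,0)-y_n\|\to0$, which exists as in the proof of Proposition \ref{normbeta}(a). Then $(y_n)$ is weakly null and represents the same class. Applying $\beta(T)$ to this class gives $\widetilde{(Ty_n)_n}$. By boundedness of $T$, $\|Ty_n-TU(e_n,0)\|\le\|T\|\,\|y_n-U(e_n,0)\|\to0$, so this class equals $\widetilde{(TU(e_n,0))_n}$ in $(Z_2)_{\U}$. The two sides therefore agree on every $\widetilde U$.

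Finally, linearity and boundedness of both compositions, together with the density of the classes $\widetilde U$ in $H$, give $\beta(T)\circ\tilde j=\tilde j\circ\Lambda(T)$ on all of $H$.
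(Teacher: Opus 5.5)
Your proof is correct and follows the paper's route. Like the paper, you evaluate both sides on the classes $\widetilde U$, $U\in L(Z_2)$, where $\beta(T)\,j(U)=\widetilde{(TU(e_n,0))_n}=j(TU)=\tilde j(\Lambda(T)\widetilde U)$. You also make explicit two points the paper leaves implicit: replacing $U(e_n,0)$ by nearby vectors of $\ell_2$, so that $\beta(T)$ acts on a legitimate representative, and extending the identity from the dense image of $L(Z_2)$ to all of $H$ by continuity.
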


\begin{proof}

Recalling that $H=L(Z_2)/\{(T,T)=0\}$ equipped with the seminorm associated to $(T,T)=\lim_{\U}\|T(e_n,0)\|_{Z_2}^2$, we see that $j$ indeed induces an isometric embedding $\tilde{j}$ of $H$ into $W$.
Let $A \in L(Z_2)$, then $\beta(A) \in L(W)$ is defined as
$$\beta(A)(\widetilde{(x_n)_n)}=\widetilde{(Ax_n)_n}.$$
In particular if $T \in L(Z_2)$ then
$$\beta(A)(\widetilde{(T(e_n,0))_n})=\widetilde{(AT(e_n,0))_n},$$
i.e.
which means that $\beta(A)$ acts from $jH$ into $jH$ and that
 $\beta(A) \circ \tilde{j} =\alpha\circ r(A) \circ \tilde{j}=\tilde{j} \circ \Lambda(A)$.

\end{proof}

We therefore improve on Kalton-Swanson's result by proving that $\Lambda$ does not only have kernel equal to $S(Z_2)$, but actually induces an isometric homomorphism on $L(Z_2)/S(Z_2)$ with respect to the $\|.\|_S$-norm.

\begin{corollary}\label{KS} The Kalton-Swanson map $\Lambda$ induces an isometric *-algebra homomorphism $\hat{\Lambda}$ from $(L(Z_2)/S(Z_2),\|.\|_S)$ into
$(L(H),\|.\|)$.
\end{corollary}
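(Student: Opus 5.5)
The plan is to combine the intertwining relation $\beta(T)\circ\tilde j=\tilde j\circ\Lambda(T)$ from the preceding proposition with the fact, established in Proposition \ref{normbeta} and the definition of $\widetilde{\beta}$, that $\widetilde{\beta}$ is a $*$-isometric embedding of $(L(Z_2)/S(Z_2),\|.\|_S)$ into $L(W)$.

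\emph{Well-definedness and the $*$-homomorphism property.} By the Kalton--Swanson theorem, $\ker\Lambda=S(Z_2)$, and $\Lambda$ is a unital $*$-algebra homomorphism. Hence $\hat\Lambda(\widetilde T):=\Lambda(T)$ is a well-defined injective $*$-homomorphism on $L(Z_2)/S(Z_2)$. It therefore only remains to prove $\|\Lambda(T)\|=\|T\|_S$.

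\emph{Upper bound.} Since $\tilde j:H\to W$ is an isometric embedding and $\beta(T)\circ\tilde j=\tilde j\circ\Lambda(T)$, for every $h\in H$ we get
$$\|\Lambda(T)h\|=\|\tilde j\Lambda(T)h\|=\|\beta(T)\tilde j h\|\le \|\beta(T)\|\,\|h\|=\|T\|_S\|h\|.$$
So $\|\Lambda(T)\|\le\|T\|_S$.

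\emph{Lower bound.} This is the main step: we must show that the subspace $\tilde j(H)$ already norms $\beta(T)$. By Corollary \ref{ssss} and Proposition \ref{equalities}, for $\varepsilon<\|T\|_S=\|T_{|\ell_2}\|_{\mathrm{Calkin}}$ there is a normalized successive block sequence $(w_n)$ in $\ell_2$ with $\|Tw_n\|\ge\varepsilon$ for all $n$.

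Let $W_0$ be the block operator on $Z_2$ associated with $(w_n)$, as in \cite{kaltonasymplectic}; it is an isometry on $Z_2$ with $W_0(e_n,0)=w_n$. Then $j(W_0)=\widetilde{(w_n)_n}$ has norm $1$, and
$$\Lambda(T)\widetilde{W_0}=\widetilde{TW_0},\qquad \|\widetilde{TW_0}\|_H^2=\lim_{\U}\|Tw_n\|^2\ge\varepsilon^2 .$$
Thus $\|\Lambda(T)\|\ge\varepsilon$ for every $\varepsilon<\|T\|_S$, which gives $\|\Lambda(T)\|\ge\|T\|_S$.

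The delicate point is making sure that the block operator $W_0$ is bounded on $Z_2$ and sends $(e_n,0)$ to $w_n$. For successive normalized blocks this is exactly the construction on p.~389 of \cite{kaltonasymplectic} already invoked in Lemma \ref{Zp}, so I would cite it there.
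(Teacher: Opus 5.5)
Your proposal is correct and follows the paper's route: the paper deduces the corollary from the intertwining relation $\beta(T)\circ\tilde j=\tilde j\circ\Lambda(T)$, the $\|.\|_S$-isometry of $\widetilde{\beta}$ and Kalton--Swanson's theorem, which is exactly your well-definedness and upper-bound step. That relation alone only identifies $\Lambda(T)$ with the restriction of $\beta(T)$ to the invariant subspace $\tilde j(H)$, so it gives just $\|\Lambda(T)\|\le\|T\|_S$; the paper leaves the reverse inequality implicit, and your block-isometry argument (successive normalized $w_n$ with $\|Tw_n\|\ge\varepsilon$, realized as $W_0(e_n,0)$) supplies the needed fact that $\tilde j(H)$ norms $\beta(T)$.
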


\subsection{Completions and closures in $L(Z_2)/S(Z_2)$}

Kalton and Swanson asked whether $\Lambda(L(Z_2))$ is closed in $L(H)$, \cite{kaltonasymplectic} Remark p392; this is equivalent to  asking whether $\beta(L(Z_2))$ is closed in $L(W)$. We shall answer this question by the negative, by studying extension properties of operators on $\ell_2$ to $Z_2$.

\begin{definition}
  If $Y$ is a self-extension of $\ell_p, 1<p<\infty$, let
 $$ext(\ell_p,Y)=r(L(Y))=\{ t \in L(\ell_p,Y): t {\rm\ admits\ an\ extension\ }T \in L(Y)\}$$  and let
 $$ext_K(\ell_p,Y)=ext(\ell_p,Y)+K(\ell_p,Y).$$
  
\end{definition}

\begin{lemma}\label{blabla} Let $Y$ be a self-extension of $\ell_p, 1<p <+\infty$. Then
\begin{enumerate}[(a)]
\item We have the inclusions $ext(\ell_p,Y) \subseteq ext_K(\ell_p,Y) \subseteq clos(ext(\ell_p,Y)).$ 
\item The closures of $ext(\ell_p,Y)$ and of $ext_K(\ell_p,Y)$ are equal.

\item $ext(\ell_p,Y)$ is closed if and only if it is equal to $L(\ell_p,Y)$.
\end{enumerate}\end{lemma}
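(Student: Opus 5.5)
The plan is to prove (a) first and then derive (b) and (c) from it.

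\emph{Part (a).} The first inclusion $ext(\ell_p,Y) \subseteq ext_K(\ell_p,Y)$ is immediate from the definition. For the second, it suffices to show that every compact $k \in K(\ell_p,Y)$ lies in $clos(ext(\ell_p,Y))$, because $ext(\ell_p,Y)$ is a linear subspace (it is the image of the linear map $r$).
\begin{itemize}
\item Finite rank operators are extensible. A rank one operator $x \mapsto f(x)y$ with $f \in \ell_p^*$ extends by Hahn--Banach to $z \mapsto \tilde f(z)y$ on $Y$, where $\tilde f$ is any Hahn--Banach extension of $f$. By linearity, every finite rank operator belongs to $ext(\ell_p,Y)$.
\item Compact operators from $\ell_p$ are norm limits of finite rank ones. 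Let $P_N$ be the basis projections of $\ell_p$. For compact $k$, we have $\|k - kP_N\| = \|k_{|X_{N+1}}\| \to 0$, since $\|k\|_{\mathrm{Calkin}} = 0$ and Proposition \ref{equalities}(c) gives $\gamma(k) = \|k\|_{\mathrm{Calkin}}$. Each $kP_N$ has finite rank, hence is extensible.
\end{itemize}
So $k \in clos(ext(\ell_p,Y))$, which gives $ext_K(\ell_p,Y) \subseteq clos(ext(\ell_p,Y))$.

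\emph{Part (b).} This is a formal consequence of (a). Taking closures in the chain of inclusions gives $clos(ext) \subseteq clos(ext_K) \subseteq clos(ext)$, so the two closures coincide.

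\emph{Part (c).} If $ext(\ell_p,Y) = L(\ell_p,Y)$, it is trivially closed. For the converse, assume $ext(\ell_p,Y)$ is closed. Then $r : L(Y) \to L(\ell_p,Y)$ is a bounded linear map between Banach spaces with closed range $E := ext(\ell_p,Y)$. The goal is to show $E = L(\ell_p,Y)$.

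I would first try a direct argument rather than invoking the open mapping theorem. The claim is that every $t \in L(\ell_p,Y)$ is a norm limit of extensible operators, so that $E$ is dense and hence, being closed, equal to $L(\ell_p,Y)$. The approach is to approximate $t$ by $tP_N$:
\begin{itemize}
\item Each $tP_N$ has finite rank and is therefore extensible.
\item However, $\|t - tP_N\| = \|t_{|X_{N+1}}\|$, which converges to $\|t\|_{\mathrm{Calkin}}$ and not to $0$ in general. So this only shows that $E$ is dense modulo compacts, i.e. that $E + K(\ell_p,Y)$ is dense in the Calkin norm, and this is not sufficient.
\end{itemize}

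This density step is the main obstacle, since genuine norm density of $ext$ fails in general. The fix I would try is to use the structure of $Y$ as a self-extension of $\ell_p$, which allows translation and block operators:
\begin{itemize}
\item For a given $t$, find an isomorphic embedding $u : \ell_p \to \ell_p$ onto a subsequence span, chosen so that $u$ extends to $Y$ via the block operators of \cite{kaltonasymplectic}.
\item Write $t$ as a combination of extensible operators composed with such $u$.
\end{itemize}
Alternatively, I would reread the intended statement of (c) as comparing $E$ with a natural candidate: since $E$ always contains the closed subspace $K(\ell_p,Y)$, closedness of $E$ should force $E$ to be the full space via a Baire-category argument combined with density of $E$ in the appropriate topology. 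I expect the density of $ext(\ell_p,Y)$ in $L(\ell_p,Y)$ to be the key difficulty to settle first.
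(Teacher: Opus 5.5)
Your parts (a) and (b) are correct and match the paper's argument. The paper simply invokes the approximation property of $\ell_p$. You make it explicit through $\|k-kP_N\|=\|k_{|X_{N+1}}\|\to\|k\|_{\mathrm{Calkin}}=0$ and the Hahn--Banach extension of finite-rank maps.

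Part (c), however, has a genuine gap: the nontrivial implication ($ext(\ell_p,Y)$ closed $\Rightarrow$ $ext(\ell_p,Y)=L(\ell_p,Y)$) is never proved.
\begin{itemize}
\item The route you pursue, norm density of $ext(\ell_p,Y)$, is not established. As you note yourself, $tP_N$ does not converge to $t$ in norm.
\item The proposed fixes are not carried out. The block operators of Kalton--Swanson rely on the specific structure of $Z_2$ (or of a (sub)symmetric $\Omega$), which a general self-extension of $\ell_p$ need not have. The Baire-category remark is not an argument.
\item $ext(\ell_p,Y)\supseteq K(\ell_p,Y)$ is not ``always'' true. It holds here only because closedness together with (a) gives it. For $Z_2$ it fails: if all compact operators extended, the uniform-boundedness argument below, applied to the surjection from $\{T\in L(Y): T_{|\ell_p}\in K(\ell_p,Y)\}$ onto $K(\ell_p,Y)$, would make every operator extendable. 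That contradicts the non-extendable complex structures of \cite{CCFM}.
\end{itemize}

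The missing idea is exactly the open mapping theorem that you chose to set aside.
\begin{itemize}
\item If $E=r(L(Y))$ is closed, then $r\colon L(Y)\to E$ is a bounded surjection between Banach spaces. So there is $\lambda>0$ such that every $t\in E$ has an extension $T$ with $\|T\|\le\lambda\|t\|$.
\item Given any $t\in L(\ell_p,Y)$, the finite-rank maps $tP_N$ lie in $E$ and satisfy $\|tP_N\|\le\|t\|$. Hence they have extensions $T_N$ with $\|T_N\|\le\lambda\|t\|$, a uniform, UFO-type extension property.
\item $Y$ is reflexive, since reflexivity is a three-space property. So bounded subsets of $L(Y)$ are relatively compact in the weak operator topology, and $(T_N)$ has a weak-operator cluster point $T$.
\item Since $T_Nx=tP_Nx\to tx$ in norm for every $x\in\ell_p$, you get $T_{|\ell_p}=t$.
\end{itemize}
So the right approximation is $tP_N\to t$ in the strong operator topology, not in norm. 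It becomes usable once closedness has supplied uniform bounds on the extensions. This is precisely what the paper's proof encodes by quoting two results: \cite{UFObis} (all compact operators extendable $\Rightarrow$ $(\ell_p,Y)$ is a UFO pair) and \cite{UFO}, Lemma 1.2 (UFO plus reflexivity $\Rightarrow$ extensible pair).
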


\begin{proof} (a) Since $\ell_p$ has the Approximation property, and since finite rank operators are always extendable, we have
$$K(\ell_p,Y) \subseteq clos(F(\ell_p,Y))
\subseteq clos(ext(\ell_p,Y)).$$
Therefore the second inclusion holds. (b) follows immediately.
(c) If $ext(\ell_p,Y)$ is closed, applying AP we then have that all compact operators are extendable; by \cite{UFObis} this implies that $(\ell_p,Y)$ is a UFO-pair \cite{UFO}, and by reflexivity of $Y$, this implies that $(\ell_p,Y)$ is an extensible pair (\cite{UFO} Lemma 1.2), meaning that all operators on $\ell_p$ are extendable. 
\end{proof}

\begin{proposition} Assume $1<p<\infty$ and $Y$ is a singular self-extension of $\ell_p$.
We have that 
$$clos(r(L(Y)))=clos(ext(\ell_p,Y))=clos(ext_K(\ell_p,Y)),$$
where the closures are $\|.\|$-closures in $L(\ell_p,Y)$.

\end{proposition}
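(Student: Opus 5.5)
The statement is almost entirely formal. By definition $r(L(Y))=ext(\ell_p,Y)$, where $r$ is the restriction map $T\mapsto T_{|\ell_p}$. So the first equality $clos(r(L(Y)))=clos(ext(\ell_p,Y))$ is an identity of sets before taking closures, and nothing needs to be proved there. The second equality $clos(ext(\ell_p,Y))=clos(ext_K(\ell_p,Y))$ is exactly Lemma \ref{blabla}(b). The plan is to make that part explicit and self-contained, taking all closures in the operator norm of $L(\ell_p,Y)$.

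\textbf{Step 1.} We have $ext(\ell_p,Y)\subseteq ext_K(\ell_p,Y)$ because $0\in K(\ell_p,Y)$. Hence $clos(ext(\ell_p,Y))\subseteq clos(ext_K(\ell_p,Y))$.

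\textbf{Step 2 (reverse inclusion).} It suffices to show $ext_K(\ell_p,Y)\subseteq clos(ext(\ell_p,Y))$, since the right-hand side is closed.
\begin{itemize}
\item Let $t+k$ be given, with $t=T_{|\ell_p}$ for some $T\in L(Y)$ and $k\in K(\ell_p,Y)$.
\item Since $\ell_p$ has a basis, $k=\lim_N kP_N$ in norm, where $P_N$ are the basis projections. Each $kP_N$ has finite rank.
\item A finite rank operator $f=\sum_i x_i^*\otimes y_i$ with $x_i^*\in\ell_p^*$ extends to $Y$: extend each functional $x_i^*$ by Hahn--Banach to some $\tilde x_i^*\in Y^*$ and take $F=\sum_i \tilde x_i^*\otimes y_i$.
\item Then $T+F_N$ extends $t+kP_N$, and $t+kP_N\to t+k$ in norm.
\item So $ext_K(\ell_p,Y)\subseteq clos(ext(\ell_p,Y))$, and therefore $clos(ext_K(\ell_p,Y))\subseteq clos(ext(\ell_p,Y))$.
\end{itemize}

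\textbf{Obstacle.} There is no real obstacle. The one point to verify is that the approximation of compact operators by finite rank ones happens on the domain side ($\ell_p$ has the approximation property), so no assumption on $Y$ is needed. The singularity hypothesis on $Y$ is not used for these equalities. It only becomes relevant afterwards, through Corollary \ref{isomr}, where $clos(ext_K)$ modulo $K(\ell_p,Y)$ is identified with the completion of $(L(Y)/S(Y),\|.\|_S)$.
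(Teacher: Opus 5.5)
Your proposal is correct and follows the paper's route: the first equality holds by the definition $ext(\ell_p,Y)=r(L(Y))$, and the second is Lemma \ref{blabla}(b), which the paper proves the same way, via density of finite-rank maps in $K(\ell_p,Y)$ and Hahn--Banach extendability of finite-rank maps. One minor precision: the norm convergence $kP_N\to k$ uses that the unit vector basis of $\ell_p$ is shrinking (true since $1<p<\infty$, false for $p=1$), so ``$\ell_p$ has a basis'' or ``$\ell_p$ has the AP'' is not by itself the right justification, though your conclusion is valid here.
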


\begin{proof} The proof comes from the definitions and Lemma \ref{blabla}

\end{proof}

\begin{proposition}\label{equivalence} Let $Y$ be a singular self-extension of  $\ell_p, 1<p<\infty$. The following are equivalent
\begin{enumerate}[(a)]
\item (when $Y=Z_2$)  $\Lambda(L(Z_2))$ is closed in $L(H)$
\item $\beta(L(Y))$ is closed in $L(W_p)$
\item $L(Y)/S(Y)$ is $\|.\|_S$-complete
\item $\|.\|_S$ is equivalent to the quotient norm on $L(Y)/S(Y)$
\item $r(L(Y)/S(Y))$ is closed in $(L(\ell_p,Y)/K(\ell_p,Y),\|.\|_{\mathrm{Calkin}})$
\item $ext_K(\ell_p,Y)$ is norm closed in $L(\ell_p,Y)$
\end{enumerate}
\end{proposition}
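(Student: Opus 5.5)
We prove the equivalences by running everything through the isometric pictures already set up, namely Corollary \ref{isomr}, Proposition \ref{normbeta} and Corollary \ref{KS}. Under $\widetilde{\beta_p}=\widetilde{\alpha_p}\circ\widetilde{r_p}$, the normed algebra $(L(Y)/S(Y),\|.\|_S)$ is identified isometrically with $\beta(L(Y))\subseteq L(W_p)$. Hence a subspace of the Banach space $L(W_p)$ is closed if and only if it is complete, which gives (b)$\Leftrightarrow$(c). For $Y=Z_2$, Corollary \ref{KS} identifies $(L(Z_2)/S(Z_2),\|.\|_S)$ isometrically with $\Lambda(L(Z_2))\subseteq L(H)$, and the same argument gives (a)$\Leftrightarrow$(c). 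Likewise $\widetilde{r_p}$ is an isometry onto its image in the Banach space $L(\ell_p,Y)/K(\ell_p,Y)$ (a quotient by a closed subspace), so (c)$\Leftrightarrow$(e).

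For (c)$\Leftrightarrow$(d), recall that the quotient norm makes $L(Y)/S(Y)$ complete, since $S(Y)$ is closed, and that $\|.\|_S\le\|.\|_{L(Y)/S(Y)}$ by Proposition \ref{seminorm}(a). If $\|.\|_S$ is also complete, the open mapping theorem applied to the identity map from the quotient norm to $\|.\|_S$ gives equivalence. Conversely, equivalence of the two norms transfers completeness.

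For (e)$\Leftrightarrow$(f), let $q:L(\ell_p,Y)\to L(\ell_p,Y)/K(\ell_p,Y)$ be the quotient map. The image $r(L(Y)/S(Y))$ is exactly $q(ext(\ell_p,Y))$. Its preimage $q^{-1}(q(ext(\ell_p,Y)))$ equals $ext(\ell_p,Y)+K(\ell_p,Y)=ext_K(\ell_p,Y)$. A subset of a quotient space is closed if and only if its full preimage is closed, because $q$ is an open continuous surjection. Hence $r(L(Y)/S(Y))$ is closed if and only if $ext_K(\ell_p,Y)$ is norm closed.

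The only delicate point is to check that the identifications really are isometric onto the relevant images, so that closedness in $L(W_p)$, in $L(H)$ or in the Calkin quotient matches completeness of $\|.\|_S$. This is exactly where singularity of $Y$ enters, through Corollary \ref{ssss}. Once that is in place, each step is a routine functional-analytic equivalence and no estimates are needed.
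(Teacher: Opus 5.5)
Your proof is correct. For the links among (a), (b), (c), (d) and (e) it is the same as the paper's: an isometric image in a Banach space is closed iff it is complete, and the open mapping theorem applies because the complete quotient norm dominates $\|.\|_S$.

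The genuine difference is in how you attach (f). The paper does not prove (e)$\Leftrightarrow$(f) directly. Instead it proves (f)$\Rightarrow$(c) and (d)$\Rightarrow$(f) by absolutely convergent series arguments:
\begin{itemize}
\item For (f)$\Rightarrow$(c), it starts from $\sum\|T_n\|_S<\infty$. It uses the approximation property of $\ell_p$ to replace each restriction $t_n$ by a finite-rank perturbation with $\|t_n\|\le 2\|t_n\|_{\mathrm{Calkin}}$. It then sums the restrictions in norm to some $s$ in the closure of $ext(\ell_p,Y)$, and invokes (f) to extend $s+k$.
\item For (d)$\Rightarrow$(f), it lifts an absolutely convergent series in $ext_K(\ell_p,Y)$ to a norm-convergent series of operators on $Y$, after strictly singular perturbations. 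These change restrictions only by compacts, by Corollary~\ref{ssss}. It then checks that the limit differs from the restriction of the limit operator by a compact.
\end{itemize}

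Your route replaces both arguments with one observation. The set $\widetilde{r_p}(L(Y)/S(Y))$ equals $q(ext(\ell_p,Y))$, and its saturated preimage is $ext_K(\ell_p,Y)$. Since the quotient map onto $L(\ell_p,Y)/K(\ell_p,Y)$ is open, closedness passes in both directions. This is shorter and needs neither the approximation property nor any control on the norms of representatives.

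What the paper's version buys is constructiveness. It exhibits the $\|.\|_S$-limit explicitly as the class of an extension of a compact perturbation of a norm limit of restrictions. This is the picture behind Proposition~\ref{notclosed}, where non-completeness is witnessed by a norm limit of extendable maps lying outside $ext_K(\ell_2,Y)$.
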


\begin{proof} The equivalence of (a)(b)(c) is clear, since
$\beta$ (and $\Lambda$) both induce an isometric map between $(L(Y)/S(Y),\|.\|_S)$ and the respective images; same for (e). The equivalence with (d) is clear since $\|.\|_S$ is always dominated by the quotient norm, which is complete.

(f) $\Rightarrow$ (c): assume we have an absolutely converging series
$\sum_n \tilde{T_n}$ in $(L(Y)/S(Y),\|.\|_S)$, and
denote $t_n=(T_n)_{|\ell_p}$. Since we may replace $T_n$ by any finite rank perturbation, we may replace $t_n$ by any finite rank perturbation, and since $\ell_p$ has AP we may wlog assume $\|t_n\| \leq 2 \|t_n\|_{\mathrm{Calkin}}=2\|T_n\|_S$. Then $\sum_n (T_n)_{|\ell_p}=\sum_n t_n$ converges in norm, to a map $s \in L(\ell_p,Y)$.
If (f) holds, then the limit map $s$ belongs to $ext_K(\ell_p,Y)$, i.e.  $s+k$ is extendable to a map $S \in L(Y)$ for some compact map $k$. Therefore we have 
that $\sum \tilde{T_n}$ converges (to $\tilde{S}$):
indeed $\|\sum_{n=1}^N \tilde{T_n}-\tilde{S}\|_S=
\|\sum_{n=1}^N t_n-s-k\|_{\mathrm{Calkin}} \leq \|\sum_{n=1}^N t_n-s\|$,
which converges to $0$. 

(d) $\Rightarrow$ (f):
Let $\sum_n t_n$ be  absolutely converging  in $ext_K(\ell_p,Y)$, and let $t$ be its limit.
Let $T_n \in L(Y)$ be such that $k_n=(T_n)_{|\ell_p}-t_n$ is compact.
We have that $\sum_n \|\tilde{t_n}\|_{\mathrm{Calkin}}$ converges so $\sum \|T_n\|_S$ converges and therefore, by (d), this holds if one replaces $\|.\|_S$ by  the quotient norm on $L(Y)$.  We may replace $T_n$ by a strictly singular perturbation so that
$\sum_n \|T_n\|$ converges.
Then $\sum_n T_n$ converges to some $T$
and  in particular, $\sum_n (T_n)_{\ell_p} =\sum_n t_n+k_n$ converges to $T_{|\ell_p}$. We deduce that $\sum_n k_n$ norm
converges to $T_{|\ell_p}-t$. Therefore
$T_{|\ell_p}-t$ is compact, which means that $t$ belongs to $ext_K(\ell_p,Y)$.

\end{proof}

\begin{remark}
It has been proved in \cite{CCFM} that whenever $Y$ is a non trivial twisted Hilbert space, there exist complex structures on $\ell_2$ which are not extendable to an operator on $Y$; therefore $ext(\ell_2,Y) \neq L(\ell_2,Y)$, which implies that
$ext(\ell_2,Y)$ is not closed.
\end{remark} 

We now improve this result to showing that
$ext_K(\ell_2,Y)$ is not closed, whenever $Y$ is a twisted Hilbert spaces which is not ``asymptotically" trivial, including Kalton-Peck space $Z_2$.

Recall that a quasilinear map $\Omega$ is $C$-trivial when $\Omega-L$ has norm at most $C$ for some linear $L$.

A quasi-linear map $\Omega$ on a Banach space $X$ with a symmetric (or subsymmetric) basis is called \emph{symmetric} (respectively, \emph{subsymmetric}) if there exists a constant $C>0$ such that 
\[
\|\, \Omega(x \circ \sigma) - \sigma \circ \Omega(x)\,\| \leq C \|x\|,
\]
for all $x \in X$ and every permutation (respectively, increasing injection) $\sigma$ of $\mathbb{N}$, where $\sigma$ acts on $X$ by permuting the coordinates according to the basis \cite{kaltondifferentials}. This means that $\Omega$ behaves in an approximately equivariant way with respect to the natural symmetry (or subsymmetry) of the underlying basis.

\begin{definition} A quasi-linear map $\Omega$ on $X$ is asymptotically trivial if there exists $C$ such that for any $n$ there exists a finite codimensional subspace $Y$ of $X$ such that for any $n$-dimensional subspace $F$ of $Y$, $\Omega_{|F}$ is $C$-trivial.
In this case we shall also say that the induced twisted sum is asymptotically trivial.
\end{definition}

The main examples of non-asymptotically trivial sums are (a) self-extensions of $\ell_p$ for which $\Omega$ is non-trivial and (sub)symmetric, including Kalton-Peck space $Z_p$, or more generally (b) any non trivial twisted Hilbert space induced by interpolation of a space with subsymmetric basis with its dual. Other examples are (c) twisted Hilbert spaces induced by interpolation of a space with asymptotically $\ell_p$-basis with its dual, $p \neq 2$. On the other hand, Suarez's weak Hilbert twisted Hilbert space \cite{Suarez} is asymptotically trivial but non-trivial. Also, it will be useful to recall from \cite{CCFM} that a quasilinear map $\Omega$ is {\em supersingular} if for any $C>0$ there exist $n \in \N$ such that for any subspace $F$ of dimension $n$, $\Omega_{|F}$ is not $C$-trivial; so supersingular maps cannot be asymptotically trivial.

\begin{lemma}\label{extension}  Let $F_k, k \in \N$ 
be a partition of $\N$ into successive subsets of cardinality $n$. Let $X, Y$ be  spaces with a (sub)symmetric basis and $t$ be an operator on $X$ such that the subspaces
$[e_i, i \in F_k]$ are $t$-invariant for all $k \in \N$.
If $\Omega: X \rightarrow Y$ is a (sub)symmetric quasilinear map,
then $t$ is extendable and liftable to $X \oplus_\Omega Y$. 
\end{lemma}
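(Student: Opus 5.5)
The approach is to reduce $t$ to a finite sum of elementary operators, each of which commutes with $\Omega$ up to a linear-plus-bounded error, and then use the standard commutator criterion. An operator $t$ (acting on $X$, and on $Y$ through the basis) is extendable and liftable to $Y\oplus_\Omega X$ as soon as there is a linear map $L$ with
$$\|\Omega(tx)-t\Omega(x)-Lx\|\leq C\|x\| \quad \text{for all } x.$$
Indeed, then $T(y,x)=(ty+Lx,tx)$ is bounded for the quasi-norm $\|y-\Omega x\|+\|x\|$, restricts to $t$ on $Y$ and induces $t$ on the quotient. The set of such $t$ (with bounded $t$ on both sides) is closed under sums and products. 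So it suffices to show that $t$ lies in the algebra generated by a finite number of operators with this property.

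\textbf{Decomposing $t$.} Write $F_k=\{m_k+1,\dots,m_k+n\}$. For $1\leq a,b\leq n$, let $D_{ab}$ be the diagonal operator whose entry at the coordinate $m_k+b$ is the $(a,b)$ matrix coefficient of the block $t_{|[e_i,i\in F_k]}$. All these coefficients are bounded by $\|t\|$, up to the basis constant, so $D_{ab}$ is bounded. Let $P_b$ be the diagonal projection onto $[e_{m_k+b},k\in\N]$. Let $\sigma_{ab}$ be the operator induced by the injection $m_k+b\mapsto m_k+a$, which is increasing because the blocks are successive. Then
$$t=\sum_{a,b\leq n}\sigma_{ab}D_{ab}P_b,$$
a sum of $n^2$ terms. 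One has to take a little care here: $\sigma_{ab}$ is applied only on the range of $P_b$, and on that range it is a genuine (sub)symmetry. Alternatively, $\sigma_{ab}P_b$ can be realised as $S_aS_b^{-1}P_b$, where $S_c$ is the injection $k\mapsto m_k+c$.

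\textbf{The elementary operators.} For the injection operators, the required estimate $\|\Omega\sigma-\sigma\Omega\|\leq C$ is exactly the definition of (sub)symmetry of $\Omega$, with $L=0$. For the diagonal operators $D_{ab}$ and $P_b$, I will invoke Kalton's theory of (sub)symmetric quasilinear maps \cite{kaltondifferentials}. By that theory, a (sub)symmetric quasilinear map is, up to a linear map plus a bounded map, an $\ell_\infty$-centraliser. This means
$$\|\Omega(ux)-u\Omega(x)\|\leq C\|u\|_\infty\|x\|$$
for every diagonal (multiplier) operator $u$. Since modifying $\Omega$ by linear plus bounded does not change the class of extendable and liftable operators, each diagonal operator then satisfies the commutator criterion.

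\textbf{Conclusion and main obstacle.} Combining the two previous steps, each term $\sigma_{ab}D_{ab}P_b$ satisfies the criterion. Hence so does $t$, with a constant depending on $n$, $\|t\|$ and the constants of $\Omega$. The main obstacle is the centraliser reduction in the merely subsymmetric case, and when $X\neq Y$. If Kalton's result is not directly available there, I would instead try to prove the commutator estimate for $P_b$ and $D_{ab}$ by hand. The idea is to write a diagonal operator as an average of operators of the form $\pm$(increasing injections composed with restriction to subsets), controlling the errors via quasilinearity, which costs a constant depending only on $n$. Failing that, I would restrict the statement to the centraliser case, which covers the main application to $Z_p$.
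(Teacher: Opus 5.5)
Your route is essentially the paper's. You split $t$ into $n^2$ pieces that move the column $\{m_k+b\}_k$ onto the column $\{m_k+a\}_k$, check that each piece commutes with $\Omega$ up to a bounded error, and assemble via the commutator criterion. Your criterion and the closure under sums and products are correct. The difference lies in the decomposition. The paper writes $t$ as a \emph{scalar} combination of $n^2$ operators $t_{A,B}\colon e_{a_k}\mapsto e_{b_k}$, each lifted diagonally by $(x,y)\mapsto(t_{A,B}x,t_{A,B}y)$. A scalar combination of $n^2$ such operators has block entries taking only finitely many values, so this covers essentially the case where all blocks carry the same matrix. That is what Proposition~\ref{main} needs, but not the lemma as stated. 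Your diagonal coefficients $D_{ab}$ cover the general statement. The price is that $\Omega$ must commute with arbitrary bounded diagonal operators. The paper tacitly needs the special case of the coordinate projection onto $A$ to lift $t_{A,B}$.

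For that step, do not lean on Kalton's centraliser theory. It concerns centralisers on K\"othe spaces, and you rightly doubt it covers subsymmetric maps or $X\neq Y$. The claim is true and elementary, essentially your fallback:
\begin{itemize}
\item \emph{Localization.} Read subsymmetry as in the paper, i.e.\ commutation with $R_\sigma x=x\circ\sigma$. Taking $\sigma$ to enumerate $\N\setminus\operatorname{supp}u$ gives $\|P_{\N\setminus\operatorname{supp}u}\,\Omega(u)\|\le C'\|u\|$ for finitely supported $u$. In the symmetric case, use permutations fixing $\operatorname{supp}u$ pointwise instead.
\item \emph{Projections and spreadings.} Quasilinearity applied to $x=P_Ax+P_{\N\setminus A}x$ then gives $\|\Omega(P_Ax)-P_A\Omega(x)\|\le C''\|x\|$, uniformly in $A\subseteq\N$. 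Together with the localization, $R_\sigma S_\sigma=\Id$ also gives commutation with the spreadings $S_\sigma$. So both factors of $S_aS_b^{-1}P_b$ are handled; only one of them is literally the definition.
\item \emph{Sign diagonals.} Quasilinearity gives commutation with every sign diagonal $P_A-P_{\N\setminus A}$.
\item \emph{General diagonals.} Write a real diagonal $d$ with $\|d\|_\infty\le 1$ as $d=\sum_j 2^{-j}\varepsilon^{(j)}$, with sign diagonals $\varepsilon^{(j)}$. Then use $\|\Omega(\sum_{j\le N}y_j)-\sum_{j\le N}\Omega(y_j)\|\le c(\Omega)\sum_{j\le N} j\|y_j\|$, which is summable against $2^{-j}$. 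For finitely supported $x$ the tail is harmless, since $\Omega$ is bounded on the finite-dimensional span of $\operatorname{supp}x$.
\end{itemize}
This yields $\|\Omega(dx)-d\Omega(x)\|\le C\|d\|_\infty\|x\|$ with $L=0$; complex diagonals follow by splitting into real and imaginary parts. With this, your proof is complete for every $t$ as in the statement.
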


\begin{proof} For  $A=\{a_k, k \in \N\}$,
$B=\{b_k, k \in \N\}$, with $a_k, b_k \in F_k$, let $t_{A,B} \in L(X,Y)$ be defined by $t_{A,B}(e_{a_k})=e_{b_k}$ for all $k \in \N$, and $t_{A,B}(e_i)=0$ if $i \notin A$.
 The operator $t$ can be written as a linear combination of $n^2$ operators of the form $t_{A,B}$, and each operator $t_{A,B}$ is extendable and liftable to
 $T_{A,B}$ defined by $T_{A,B}(x,y)=(t_{A,B}x.t_{A,B}y)$
\end{proof}

Next we need a quantified version of \cite{CCFM} Lemma 3.2, to characterize $C$-triviality in terms of certain averages $\nabla \Omega$. If $\Omega$ is quasilinear and $(u_i)_i$ is a finite sequence of vectors, the authors of \cite{CCFM} define 
$$\nabla_{(u_i)_i}\Omega={\rm Ave}_{\varepsilon_i=\pm 1}
\|\Omega(\sum_i \varepsilon_i u_i)-\sum_i \varepsilon_i \Omega(u_i)\|.$$

\begin{lemma}\label{trivialNew}Let $H$ be  a  Hilbert space, $\Omega: H \rightarrow H$  be a quasi-linear map, and $W$ a closed subspace of $H$. There are universal constants $a,A,B$ such that
\begin{enumerate}
\item[(i)]
If $\Omega_{|W}$ is $C$-trivial, then for every  finite sequence $x=(x_k)^n_{k=1}$  of  normalized vectors in $W$ and every finite sequence of scalars $\lambda =(\lambda _k)_{k=1}^n $,
$$   \nabla_{[\lambda x]} \Omega\leq aC  \|  \lambda \|_2.
$$
\item[(ii)]
conversely, if    $\nabla_{[\lambda x]} \Omega\leq C  \|  \lambda \|_2$ for all $x$ in $W$ and $\lambda$ as above, then $\Omega_{|W}$ is $AC+B$-trivial.
\end{enumerate}
\end{lemma}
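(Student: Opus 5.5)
Both directions will come from comparing $\Omega$ with a linear map via Rademacher averages, using the type $2$ and cotype $2$ of $H$ (constants $1$ and universal Khintchine--Kahane constants).

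For (i), let $L:W\to H$ be linear with $\|(\Omega-L)w\|\le C\|w\|$ on $W$. Write $B=\Omega_{|W}-L$, a homogeneous map bounded by $C$. By linearity of $L$,
$$\Omega\Big(\sum_i\varepsilon_i\lambda_ix_i\Big)-\sum_i\varepsilon_i\Omega(\lambda_ix_i)=B\Big(\sum_i\varepsilon_i\lambda_ix_i\Big)-\sum_i\varepsilon_iB(\lambda_ix_i).$$
The first term has norm at most $C\|\sum_i\varepsilon_i\lambda_ix_i\|$. Averaging over signs and using type $2$ of $H$, this is at most $C(\sum_i|\lambda_i|^2)^{1/2}=C\|\lambda\|_2$. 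For the second term, Kahane's inequality together with the orthogonality of Rademacher sums in a Hilbert space gives
$$\mathrm{Ave}\Big\|\sum_i\varepsilon_iB(\lambda_ix_i)\Big\|\le\Big(\sum_i\|B(\lambda_ix_i)\|^2\Big)^{1/2}\le C\|\lambda\|_2.$$
Hence (i) holds with $a=2$.

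For (ii), I would follow the Kalton--Peck style proof of \cite{CCFM} Lemma 3.2 and track the constants. First, the hypothesis together with cotype $2$ should give an estimate
$$\Big\|\Omega\Big(\sum_i u_i\Big)-\sum_i\Omega(u_i)\Big\|\le K\Big(\sum_i\|u_i\|^2\Big)^{1/2}$$
for finite sequences in $W$. To get this, apply the averaged bound to the family $u_i$. Then use quasilinearity, with constant $c(\Omega)$, to pass from a typical sign choice to the all-plus-one choice. For that passage I would split the vectors into the $\varepsilon_i=1$ and $\varepsilon_i=-1$ blocks and use that $\Omega$ is homogeneous.

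Second, I would bring in the $2$-convexity of the norm of $H$ through the Kalton--Roberts / Kalton ``$K$-space'' type result: a quasilinear map that is approximately additive on $\ell_2$-controlled sums is close to a linear map. Concretely, I would use the quantitative Kalton--Roberts theorem, with its universal constant, to produce a linear $L$ with $\|\Omega-L\|\le AC+B$ on $W$.

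The main obstacle is the second step: keeping the constants universal. The additive constant $B$ must absorb the dependence on the quasilinearity constant of $\Omega$, or else one normalises to $c(\Omega)\le1$. If this is delicate, my fallback is to reduce to finite-dimensional subspaces $F\subseteq W$ with uniform constants and take an ultrafilter limit of the linear maps $L_F$. This is possible because boundedness of $\Omega-L_F$ makes the maps $L_F$ pointwise bounded, which lets the limit be taken pointwise.
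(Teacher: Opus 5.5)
Your part (i) is correct and is the paper's argument. You get $a=2$ rather than $2k_2$ by using $\mathrm{Ave}_\pm\|\sum_i\varepsilon_iy_i\|\le(\sum_i\|y_i\|^2)^{1/2}$ in $H$.

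Part (ii) has a genuine gap at its first step. The pointwise estimate $\|\Omega(\sum_i u_i)-\sum_i\Omega(u_i)\|\le K(\sum_i\|u_i\|^2)^{1/2}$ is false even for trivial maps, so it cannot follow from the hypothesis. Take $H=W=\ell_2$, a unit vector $v$, and $B(x)=\frac{x(k_x)}{|x(k_x)|}\|x\|\,v$, where $k_x$ is the first nonzero coordinate of $x$ and $B(0)=0$. This map is homogeneous and bounded by $1$, hence $1$-trivial, so by (i) it satisfies the hypothesis of (ii) with constant $2$. Yet for $u_i=e_i$, $1\le i\le n$, one gets $\|B(\sum_i e_i)-\sum_iB(e_i)\|=\|\sqrt n\,v-n\,v\|=n-\sqrt n$, against $(\sum_i\|u_i\|^2)^{1/2}=\sqrt n$.

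The mechanism you sketch cannot work. An average over $2^n$ sign patterns gives no control over the single pattern $\varepsilon\equiv 1$. Splitting into the blocks $\{\varepsilon_i=1\}$ and $\{\varepsilon_i=-1\}$ just reproduces the same problem on each block, plus quasilinearity errors $c(\Omega)(\|u_P\|+\|u_N\|)$ that no $\ell_2$-sum controls.

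Step 2 is not available either. Kalton--Roberts concerns nearly additive set functions, equivalently scalar-valued quasilinear maps on $\mathcal L_\infty$-spaces. No approximate-additivity principle can produce $L$ here: the Kalton--Peck map $\Omega_2$ satisfies $\|\Omega_2(\sum_i u_i)-\sum_i\Omega_2(u_i)\|\le K\sum_i\|u_i\|$ (this is just local convexity of $Z_2$), yet it is nontrivial. Your ultrafilter fallback only passes from \emph{uniform} finite-dimensional triviality to $W$. Producing that uniform constant is the whole content of (ii).

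The missing idea is to keep the average rather than de-randomize it. $\nabla\Omega$ is exactly the term that appears in a Rademacher type-$2$ estimate for the twisted sum itself. For $(y_i,\lambda_iw_i)\in H\oplus_\Omega W$ with $\|w_i\|=1$,
\[
\mathrm{Ave}_\pm\Big\|\sum_i\varepsilon_i(y_i,\lambda_iw_i)\Big\|_\Omega\le\nabla_{[\lambda w]}\Omega+\mathrm{Ave}_\pm\Big\|\sum_i\varepsilon_i\big(y_i-\Omega(\lambda_iw_i)\big)\Big\|+\mathrm{Ave}_\pm\Big\|\sum_i\varepsilon_i\lambda_iw_i\Big\|.
\]
The hypothesis and type $2$ of $H$ bound the right-hand side by $(C+2)\big(\sum_i\|(y_i,\lambda_iw_i)\|_\Omega^2\big)^{1/2}$. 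So $H\oplus_\Omega W$ has type $2$ with constant at most $C+2$. Maurey's extension theorem (type $2$ domain, Hilbert target) then extends $\Id_H$ to a projection onto $H$ of norm at most $c(C+2)$. Writing this projection as $(y,w)\mapsto(y+\ell w,0)$ with $\ell$ linear shows that $\Omega_{|W}$ is $c(C+2)$-trivial. This is where the Hilbertian geometry of the target enters. It gives universal $A,B$ with no dependence on $c(\Omega)$, so the normalisation issue you worried about never arises.
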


\begin{proof} Suppose  that  $\Omega|_W$ is $C$-trivial for a closed subspace $W$ of $H$. Then we  can write $\Omega|_W=B+L$  with $B$ bounded homogeneous and $L$ linear. Let $x=(x_k)_{k=1}^n$ be a  normalized sequence of  vectors in $W$ and $\lambda =(\lambda _k)_{k=1}^n $ be a sequence of scalars. Since $H$ has type 2 one gets:
\begin{eqnarray*}
 \nabla_{[\lambda x]} \Omega   &=& {\rm Ave}_{\pm} \left \|  B \left ( \sum_{k=1}^{n} \varepsilon_k \lambda_k  x_k\right ) - \sum_{k=1}^{n} \varepsilon_k B(\lambda_k  x_k)\right \| \\
&\leq& \|B\|{\rm Ave}_{\pm} \left \| \sum_{k=1}^{n}  \varepsilon_k  \lambda_k x_k \right \| +Ave_{\pm}  \left \|  \sum_{k=1}^{n}  \varepsilon_kB  \lambda_k x_k \right \| \\
&\leq& \|B\| k_2\left(  \sum_{k=1}^{n}   \| \lambda_k  x_k\| ^2\right )^{1/2} +  k_2\left(  \sum_{k=1}^{n} \|B \lambda_k x_k\| ^2\right )^{1/2}\\
&\leq& 2k_2C  \|\lambda\|_2.
\end{eqnarray*}
Here $k_2$ is (one of )the type $2$ constant(s) of $\ell_2$.
Conversely,  let $(y_i, \lambda_iw_i)_{i=1}^n$ be a finite sequence of vectors in $H\oplus_{\Omega} W$, with $w=(w_i)_{i=1}^n$ normalized,   and $\varepsilon=(\varepsilon_i)_{i=1}^n$  be a sequence of signs, then 

\begin{eqnarray*}
\left \| \sum_{i=1}^n \varepsilon_i (y_i, \lambda_iw_i) \right \|_{\Omega}  &=&  \left \|  \sum_{i=1}^n \varepsilon_i y_i - \Omega \left (  \sum_{i=1}^n \varepsilon_i \lambda_iw_i \right ) \right \|  + \left \|   \sum_{i=1}^n \varepsilon_i \lambda_iw_i\right \|\\
&\leq&  \left  \|  \sum_{i=1}^n \varepsilon_i \Omega (\lambda_iw_i)- \Omega \left (  \sum_{i=1}^n \varepsilon_i \lambda_iw_i \right ) \right \| + \left \|  \sum_{i=1}^n \varepsilon_i \left(y_i - \Omega(\lambda_iw_i) \right)\right \| +  \left \|   \sum_{i=1}^n \varepsilon_i\lambda_iw_i\right \|. 
\end{eqnarray*}
Now, by taking the average in both sides and using that $H$ has type 2, we have

\begin{eqnarray*} 
{\rm Ave}_{\pm}  \left \| \sum_{i=1}^n \varepsilon_i (y_i, \lambda_i w_i)  \right \|_{\Omega}   &\leq &   \nabla_{ [ \lambda w]} \Omega  +  k\left (  \sum_{i=1}^n \| y_i- \Omega (\lambda_iw_i)\|^2 \right )^{1/2} + k\left (  \sum_{i=1}^n \|\lambda_iw_i\|^2 \right )^{1/2}\\
&\leq&  (C+4k_2) \left (  \sum_{i=1}^n \left \| (y_i, \lambda_iw_i) \right \|_{\Omega} ^2\right )^{1/2}.
\end{eqnarray*}
Therefore  $H\oplus_{\Omega} W$ has type 2 with constant $C+4k_2$. It follows from the Maurey's extension theorem, \cite{Ma} Corollaire 4, that H is $c(C+4k_2)$-complemented in $H\oplus_{\Omega} W$, for some universal constant $c$, and then  the restriction of $\Omega$ to $W$ is $c(C+4k_2)$-trivial. 
\end{proof}

Recall that a pair of maps $(\tau,T)$ on spaces $X$ and $Y$, respectively, is said to be {\em compatible} (with a quasilinear map $\Omega$) if there is a lifting $\widetilde{T}$ of $T$ to $X \oplus_\Omega Y$ with induced map $\tau$ on $X$ by restriction. It is {\em $C$-compatible} when $\widetilde{T}$ may be chosen with norm at most $C$.
 Classical compatibility estimates (see for example \cite{CCFM}) imply that if $(\tau,T)$ is $C$-compatible, then
$$\Omega T-\tau \Omega$$ is $C$-trivial. This follows from seeing $\widetilde{T}$ as a matrix $\begin{pmatrix} \tau & L \\ 0 & T\end{pmatrix}$ and writing the estimate
$$\|\widetilde{T}\|  \geq \|\widetilde{T}(\Omega x, x)\|_\Omega=\|(\tau \Omega x + Lx, Tx)\|_\Omega=\|Tx\|+\|(\tau \Omega - \Omega T +L)x\|,$$ for all normalized $x$.

\begin{proposition}\label{main} Assume $Y=\ell_2 \oplus_\Omega \ell_2$ is a non asymptotically trivial twisted Hilbert space, and let $n \in \N$. Then there is a map $t_n$ on $\ell_2$ such that any lifting of a compact perturbation of $t_n$ to $Y$ has norm at least $n$. If $\Omega$ is (sub)symmetric then $t_n$ may be chosen to be liftable.
\end{proposition}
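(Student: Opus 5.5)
The plan is to build $t_n$ block-diagonally, gluing finite-dimensional obstructions supplied by non asymptotic triviality, and then to show that a norm-$n$ lifting bound would force $\Omega$ to be uniformly trivial on the blocks.

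\textbf{Step 1: choice of the constant.} Fix $n$ and pick a constant $D=D(n)$ much larger than $n$; the precise dependence comes from the compatibility estimates in Step 3. Since $\Omega$ is not asymptotically trivial, for this $D$ there is $m$ such that every finite codimensional subspace of $\ell_2$ contains an $m$-dimensional subspace $F$ on which $\Omega_{|F}$ is not $D$-trivial.

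\textbf{Step 2: construction of $t_n$.} Choose inductively such subspaces $F_k$, each inside the tail $[e_i, i> N_k]$. Up to small perturbations, which change $\Omega$-triviality constants only by a controlled amount since $\Omega$ is quasilinear, we may take $F_k$ finitely supported with successive supports. Let $E_k$ be the span of the basis vectors in the support of $F_k$ (or a suitable $2m$-dimensional block containing $F_k$). On each $E_k$, define a norm one operator $\tau_k$ that is ``incompatible'' with $\Omega$. The first thing to try is $\tau_k = 2P_{F_k}-\Id_{E_k}$, a reflection in $F_k$, or a complex structure on $E_k$ that rotates $F_k$ against its orthogonal complement, so that $t_n$ can be chosen to be a complex structure as in the introduction. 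Set $t_n=\bigoplus_k \tau_k$, which is an isometry on $\ell_2$. In the (sub)symmetric case, arrange the blocks $E_k$ to be successive blocks of equal cardinality; Lemma \ref{extension} then shows that $t_n$ is extendable and liftable.

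\textbf{Step 3: lower bound, the main obstacle.} Suppose $t_n+k$, with $k$ compact, admits a lifting $\widetilde T$ of norm less than $n$. By the compatibility estimate stated just before the proposition, $\Omega(t_n+k)-\tau\Omega$ is $n$-trivial, where $\tau$ is the induced operator on the subspace copy. Since $k$ is compact, $\|k_{|[e_i,i>N]}\|\to 0$, so on late blocks the compact perturbation is negligible, with an error controlled by the quasilinearity constant. We then want to deduce that $\Omega_{|F_k}$ is $C(n)$-trivial for large $k$, contradicting the choice of $F_k$ once $D> C(n)$.

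The idea for this deduction is to use Lemma \ref{trivialNew} and control averages $\nabla_{[\lambda x]}\Omega$ for $x$ in $F_k$. Since $\tau_k$ acts as the identity on $F_k$ and as $-\Id$ on $F_k^\perp\cap E_k$, the combination $\tfrac12(\widetilde T+\Id)$ (or, for a complex structure, a suitable combination of $\widetilde T$ and $\Id$) restricted near $E_k$ yields a bounded lifting of the projection $P_{F_k}$ onto $F_k$. The bound is roughly $n+1$, plus an error coming from the unknown diagonal part $\tau$, which must first be shown to agree with $t_n$ modulo a small operator on late blocks. A bounded lifting of the projection onto $F_k$ gives a bounded projection in $\ell_2\oplus_\Omega F_k$ onto the copy of $\ell_2$, that is, $C(n)$-triviality of $\Omega_{|F_k}$. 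The delicate points, which I expect to be the main obstacle, are these two: identifying $\tau$ with $t_n$ up to a compact error using singularity and Proposition \ref{restriction}, and keeping all constants uniform in $k$. If the reflection choice fails, the fallback is to let $\tau_k$ be a block permutation mixing $F_k$ with a copy $G_k$ on which $\Omega$ behaves trivially, and to extract triviality on $F_k$ via the $\nabla$ averages of Lemma \ref{trivialNew}.
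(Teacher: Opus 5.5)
There is a genuine gap, at exactly the step you flagged, and it breaks the construction itself, not just an estimate.

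\textbf{The deduction in Step 3 is false.} You claim that a bounded lifting of $P_{F_k}$ gives $C(n)$-triviality of $\Omega_{|F_k}$. A lifting $\widetilde P=\begin{pmatrix} p & L\\ 0 & P_{F_k}\end{pmatrix}$ only says that $\Omega P_{F_k}-p\,\Omega$ is trivial. On $F_k$ this gives $\Omega\approx p\,\Omega$ up to a linear map, and the unknown induced map $p$ can carry all of the non-triviality. Equivalently, on $\pi^{-1}(F_k)=\ell_2\oplus_\Omega F_k$ the operator $\Id-\widetilde P$ does take values in $\ell_2$, but it acts there as $\Id-p$, so it is not a projection onto $\ell_2$.

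\textbf{Concrete counterexample in $Z_2$.} Spans of $m$ basis vectors are the standard witnesses of non-asymptotic triviality ($\nabla\Omega$ of order $\sqrt m\log m$), and nothing in your Step 1 excludes choosing such $F_k$. The Kalton--Peck map commutes exactly with sign changes and permutations of the basis. Take $E_k$ a block of basis vectors containing $F_k$. Then both your reflection $2P_{F_k}-\Id_{E_k}$ and your rotation of $F_k$ against its complement are signed permutations. Hence $t_n$ lifts isometrically via $(y,x)\mapsto(t_ny,t_nx)$. Here $\tau=t_n$ exactly, so identifying $\tau$ with $t_n$ would not help. That identification would in any case need singularity, which is not a hypothesis of the statement.

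\textbf{The missing idea.} It is the one you list only as a fallback, and it needs an ingredient you do not supply: $t_n$ must map blocks on which $\Omega$ is \emph{uniformly trivial} onto blocks on which it is very non-trivial. The paper proceeds as follows.
\begin{enumerate}
\item It splits $\ell_2=H_1\oplus H_2$ along subsequences of the basis so that $\Omega_{|H_1}$ is not asymptotically trivial and $\Omega_{|H_2}$ is not supersingular, using the argument of Theorem 3.1 in \cite{CCFM}.
\item This gives normalized $(f_i)$ in $H_2$ with $\Omega$ $C$-trivial on every $[f_i,i\in F_k]$, and $(e_i)$ in $H_1$ with $\nabla_{[(\lambda_ie_i)_{i\in F_k}]}\Omega\ge c_n$.
\item One sets $Tf_i=e_i$.
\end{enumerate}
The direction of $T$ is what neutralizes the unknown induced map. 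If $U$ has a lifting $\widetilde U$ with induced map $u$, then $\nabla_{[\lambda_if_i]}(u\Omega)\le\|u\|\,\nabla_{[\lambda_if_i]}\Omega\le C\|u\|$. Applying Lemma \ref{trivialNew} to the $\|\widetilde U\|$-trivial map $\Omega U-u\Omega$ then yields $\nabla_{[\lambda_iUf_i]}\Omega\le C\|u\|+2k_2\|\widetilde U\|$. This uses only $\|u\|\le\|\widetilde U\|$, never an identification of $u$. For $U=T+K$ the compact part is negligible on late blocks, which gives $c_n\le 2k_2c(\Omega)+(C+2k_2)\|\widetilde T\|$.

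In your construction $t_n$ maps each $E_k$ into itself, and you have no uniform near-linearity of $\Omega$ on any part of $E_k$. So no such estimate is available there.
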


\begin{proof} 
We follow the ideas of  Theorem 3.1 in \cite{CCFM}.
We claim there is an orthogonal decomposition $\ell_2=H_1 \oplus H_2$ (using subspaces generated by subsequences of the basis) such that
$\Omega_{|H_1}$ is not asymptotically trivial and $\Omega_{|H_2}$ is not supersingular.
Indeed if for example $\Omega_{|H_1}$ is asymptotically trivial then it is not supersingular and $\Omega_{|H_2}$ cannot be asymptotically trivial, and we are done.
If neither $\Omega_{|H_1}$ nor $\Omega_{|H_2}$ are asymptotically trivial, then since as in the proof of \cite{CCFM} Theorem 3.1 one of the two must be non supersingular, we are also done.

Since $\Omega_{|H_1}$ is not asymptotically trivial, for a constant $c_n$ to be specified later, there exists $n_0$ such that every finite codimensional subspace of $H_1$ contains a subspace $E$ of dimension $n_0$ such that the restriction $\Omega_{|E}$ is not $c_n$-trivial. 
We consider  finite disjoint sets $F_k, k \in \N$ of cardinality $n_0$. 
Therefore by Lemma \ref{trivialNew} we can find normalized
$(e_i)_i$ in $H_1$ such that $[e_i, i \in F_k], k \in \N$ are successive and for each $k$, $$\nabla_{[(\lambda_i e_i)_{i \in F_k}]}\Omega \geq c_n $$ for some normalized $(\lambda_i)_{i \in F_k}$.
On the other hand by the definition of supersingularity \cite{CCFM}, we can pick normalized $(f_i)_i$ in $H_2$ and $C>0$ such that  $\Omega_{|[f_i, i \in F_k]}$ is $C$-trivial for each $k$. We may also assume that $[f_i, i \in F_k], k \in \N$ are successive. Therefore by Lemma \ref{trivialNew} and up to a change of constant, $\nabla_{[(\lambda_i f_i)_{i \in F_k}]}\Omega \leq C$ whenever $\| (\lambda_i)_i\|_2=1$. We may also assume the sequences $(e_i)_i$ and $(f_i)_i$ are  mutually disjointly supported in $\ell_2$.

Note that whenever $U$ is an operator on $\ell_2$ admitting a lifting $\widetilde{U}$ to $Y$  with induced map $u$ on $\ell_2$ by restriction, then applying Lemma \ref{trivialNew} to the $\|\widetilde{U}\|$-trivial map $\Omega U- u\Omega$, we get
$$\nabla_{[\lambda_i f_i, i \in F_k]}(\Omega U - u\Omega) \leq 2k_2\|\widetilde{U}\| \|(\lambda_i)_{i \in F_k}\|_2.$$
By a triangle inequality we obtain for all normalized $(\lambda_i)_{i \in F_k}$
$$\nabla_{[\lambda_iUf_i, i \in F_k]}\Omega \leq C\|u\|  + 2k_2\|\widetilde{U}\| \leq (C+2k_2)\|\widetilde{U}\|.$$
We denote by $M_n$  the supremum value of 
$\nabla_{[\lambda_ 1 x_1,\ldots,\lambda_n x_n]}\Omega$, where $x_1,\ldots,x_n$ range over all $x_i$'s of norm at most $1$ and $(\lambda_1,\ldots,\lambda_n)$ over all elements of $\|.\|_2$-norm $1$. 
Note that $M_n<\infty$ because $\Omega$ is quasi-linear. 
Pick now $T$ to be any operator on $\ell_2$ with $Tf_i=e_i$ for all $i$, and assume that $K$ is a compact operator on $\ell_2$ such that $T+K$ admits a lifting $\widetilde{T}$ with induced map $\tau$. Then in the above formula, since $(T+K)f_i=e_i+Kf_i$ we obtain
for all normalized $(\lambda_i)_{i \in F_k}$, $$\nabla_{[\lambda_i e_i+\lambda_i Kf_i, i \in F_k]}\Omega \leq (C+2k_2)\|\widetilde{T}\|.$$

Then by the triangular inequality and the fact the $\Omega$ is quasilinear:
$$\nabla_{[\lambda_i e_i, i \in F_k]}\Omega \leq \nabla_{[\lambda_i Kf_i, i \in F_k]}\Omega+ (C+2k_2)\|\widetilde{T}\|  + 2k_2 c(\Omega)
(1+\|K_{|[f_i, i \in F_k]}\|) 
 $$
 
Given $\varepsilon>0$ and taking $k$ large enough, we have
$$\|K_{|[f_i, i \in F_k]}\| \leq \varepsilon,$$ therefore, using the homogeneity of $\Omega$ and the fact that each $F_k$ has cardinality $n_0$,
we have  $$\nabla_{[\lambda_i Kf_i, i \in F_k]}\Omega \leq \varepsilon M_{n_0}.$$
It follows that
$$c_n \leq \varepsilon M_{n_0} + (1+\varepsilon)2k_2 c(\Omega) +(C+2k_2)\|\tilde{T}\|,$$
and therefore
$$c_n \leq 2k_2 c(\Omega)+(C+2k_2)\|\widetilde{T}\| .$$
Therefore no compact perturbation of $T$ can be lifted with to an operator $\widetilde{T}$ norm less than ${C+2k_2}^{-1} (c_n-2k_2 c(\Omega)$, and we only need to choose $c_n$ so that the previous expression is greater than $n$.

In the case of a (sub)symmetric $\Omega$, after the choice of $(e_i)_{i \in F_1}$ and $(f_i)_{i \in F_1}$, we may by subsymmetry assume that $(e_i)_{i \in F_k}$ and $(f_i)_{i \in F_k}$ are obtained by a map induced by permutation of the basis of $\ell_2$ from $(e_i)_{i \in F_1}$ and $(f_i)_{i \in F_1}$, and one can extend $T$ trivially to the whole of $\ell_2$. This implies that Lemma \ref{extension}
applies, and so $T$ may be assumed to be liftable (and extendable) to $Y$.
\end{proof}

Note that one can easily  make the construction so that $t_n$ is, for example, a complex structure, as in \cite{CCFM}, a symmetry, or other desired properties.

\begin{corollary}\label{exx} Assume $Y=\ell_2 \oplus_\Omega \ell_2$ is a twisted Hilbert space, with $Y^*$ non asymptotically trivial, and let $n \in \N$. Then there is a map $t_n$ on $\ell_2$ such that any extension of a compact perturbation of $t_n$ to $Y^*$ has norm at least $n$. If $\Omega$ was (sub)symmetric then $t_n$ may be chosen to be extendable.
\end{corollary}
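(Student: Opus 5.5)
The plan is to derive the statement from Proposition~\ref{main} by duality, exchanging \emph{extensions} on $Y^*$ with \emph{liftings} on $Y=Y^{**}$. Dualizing $0\to\ell_2\to Y\to\ell_2\to 0$ gives a twisted Hilbert space $Y^*=\ell_2\oplus_{\Omega^*}\ell_2$. Since $Y$ is reflexive, dualizing once more returns the original sequence, with two identifications:
\begin{itemize}
\item the subspace copy of $\ell_2$ in $Y^*$ is the dual of the quotient copy of $\ell_2$ in $Y$;
\item the quotient copy in $Y^*$ is the dual of the subspace copy in $Y$.
\end{itemize}
Hence, if $T\in L(Y^*)$ extends an operator $t$ on the subspace $\ell_2\subseteq Y^*$, then $T^*\in L(Y)$ leaves the annihilator of that subspace invariant, and this annihilator is the subspace copy of $\ell_2$ in $Y$. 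Therefore $T^*$ induces on the quotient $\ell_2$ of $Y$ the operator $t^*$, i.e. $T^*$ is a lifting of $t^*$, and $\|T^*\|\le c\|T\|$. Here $c$ depends only on the equivalence between the quasi-norm of $Y$ and the norm giving the duality with $Y^*$. Conversely, adjoints of liftings on $Y$ of operators on the quotient $\ell_2$ are extensions on $Y^*$. Compact perturbations are respected: $(t+K)^*=t^*+K^*$ with $K^*$ compact.

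The argument then runs in order.
\begin{enumerate}
\item Check that $Y$ is non asymptotically trivial, using the hypothesis that $Y^*$ is; this is the step I expect to be the main obstacle, discussed below.
\item Apply Proposition~\ref{main} to $Y$ with the integer $cn$. This gives $s$ on $\ell_2$ such that every lifting to $Y$ of every compact perturbation of $s$ has norm at least $cn$.
\item Put $t_n=s^*$. If $T$ extends $t_n+K$ to $Y^*$ with $K$ compact, then $T^*$ lifts $s+K^*$, so $cn\le\|T^*\|\le c\|T\|$, and hence $\|T\|\ge n$.
\item For the (sub)symmetric case: the operator $s$ built in Proposition~\ref{main} is block diagonal on the successive blocks $[e_i,i\in F_k]$, $[f_i,i\in F_k]$, which are obtained from the first block by basis permutations. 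Hence $t_n=s^*$ is also block diagonal with respect to the same (orthonormal) blocks. Lemma~\ref{extension}, applied to the (sub)symmetric dual quasilinear map (alternatively, the adjoint of the liftable map given by Proposition~\ref{main}), shows that $t_n$ is extendable to $Y^*$.
\end{enumerate}

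The main obstacle is the transfer of non asymptotic triviality from $Y^*$ to $Y$. The plan is to prove, with universal constants, that $\Omega$ restricted to a finite dimensional $F$ is $C$-trivial if and only if the corresponding finite dimensional piece of $\Omega^*$ is $(AC+B)$-trivial.

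Local $C$-triviality of $\Omega_{|F}$ means that the pullback sequence $0\to\ell_2\to\pi^{-1}(F)\to F\to0$ has a section of norm about $C$. By Lemma~\ref{trivialNew}, this is equivalent, up to universal constants, to $\pi^{-1}(F)$ having type $2$ constant bounded by a function of $C$, and then to a projection onto $\ell_2$ of controlled norm (Maurey's extension theorem). Dualizing, this becomes a uniformly bounded cotype/type $2$ statement for the quotient of $Y^*$ by $F^{\perp}$, i.e. for the pushout of the dual sequence along the orthogonal projection onto $F$.

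Because everything is Hilbertian, projecting onto $F$ gives a pushout that contains the restriction of $\Omega^*$ to $F$ complemented with norm $1$. Thus triviality constants match up to universal factors. The finite codimensional subspaces used in the definition of asymptotic triviality dualize to finite codimensional subspaces under $W\mapsto W$ (orthogonal complements of finite dimensional sets), so $Y$ is asymptotically trivial exactly when $Y^*$ is. If this bookkeeping fails, I would fall back on the concrete averages $\nabla$ of Lemma~\ref{trivialNew}, computed through the duality pairing $B$.
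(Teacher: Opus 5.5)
\textbf{The gap is step 1.} Your duality bookkeeping in steps 2--4 is correct: extensions on $Y^*$ are exactly adjoints of liftings on $Y=Y^{**}$, compact perturbations dualize to compact perturbations, and block-diagonal operators stay block-diagonal. Step 1, however, does not go through. The claim that fails is that the pushout of the dual sequence along $P_F$ ``contains the restriction of $\Omega^*$ to $F$ complemented with norm $1$.''

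What duality actually gives is the following. $\Omega_{|F}$ is trivial iff the pushout $P_F\Omega^*:\ell_2\to F$ is trivial, where this map is defined on the \emph{whole} quotient $\ell_2$ of $Y^*$. Equivalently, the orthogonal projection $P_F$ on the subspace copy of $\ell_2$ in $Y^*$ extends to $Y^*$ with controlled norm. Restricting that pushout to $F$ only recovers $P_F\Omega^*_{|F}$. It forgets the component $(\Id-P_F)\Omega^*_{|F}$, which takes values in the infinite dimensional space $F^\perp$. So nothing relates the triviality constants of $\Omega_{|F}$ and $\Omega^*_{|F}$.

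In other words, asymptotic triviality of $\Omega$ is equivalent to uniform triviality of the rank-$n$ pushouts $P_F\Omega^*$ for far-out $F$. That is a condition on compressions of the range of $\Omega^*$, not on restrictions of its domain. Passing from one to the other is a genuine local-to-global problem, and neither your sketch nor the fallback to $\nabla$-averages addresses it. The remark that finite codimensional subspaces dualize ``under $W\mapsto W$'' has no content either. As it stands, step 1 asserts that asymptotic triviality is self-dual for twisted Hilbert spaces, and you have not proved this.

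\textbf{The paper never needs this transfer.} Its ``simple duality argument'' applies Proposition~\ref{main} directly to the twisted Hilbert space $Y^*$, which is non asymptotically trivial by hypothesis. This gives an operator $s$ on $\ell_2$ such that every lifting to $Y^*$ of every compact perturbation of $s$ has norm at least $cn$. Then one puts $t_n=s^*$. By exactly your adjoint correspondence, extensions of compact perturbations of $t_n$ live on $Y^{**}=Y$ and have norm at least $n$. In the (sub)symmetric case, liftability of $s$ to $Y^*$ gives extendability of $t_n$ to $Y$.

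So the hypothesis on $Y^*$ is paired with extensions to $Y$. This is how the corollary is used in Corollary~\ref{examp} (``no compact perturbation of $T$ admits a lifting to $Y^*$. The result then follows by duality'') and in Proposition~\ref{notclosed}. The ``$Y^*$'' in the printed conclusion should therefore be read as $Y$. Taking the statement literally forced you into step 1. If you run your step 2 on $Y^*$ instead of $Y$, step 3 produces extensions to $Y$, and step 1 disappears entirely.
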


\begin{proof} A simple duality argument. \end{proof}

\begin{corollary}\label{examp} Assume $Y=\ell_2 \oplus_\Omega \ell_2$ is a twisted Hilbert space, with $Y^*$  non asymptotically trivial. Then $ext_K(\ell_2,Y) \neq L(\ell_2,Y)$. Actually there is a map  $T: \ell_2 \rightarrow \ell_2 \subseteq Y$ which does not belong to $ext_K(\ell_2,Y)$.
\end{corollary}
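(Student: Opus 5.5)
The plan is to derive Corollary \ref{examp} from Corollary \ref{exx} combined with the definition of $ext_K(\ell_2,Y)$.

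\textbf{Step 1: candidate map.} Fix $n$ and take the map $t_n$ on $\ell_2$ provided by Corollary \ref{exx}. Set $T=i\circ t_n:\ell_2\to\ell_2\subseteq Y$, where $i$ is the canonical embedding. Suppose, towards a contradiction, that $T\in ext_K(\ell_2,Y)$. Then $T+k=S_{|\ell_2}$ for some compact $k\in K(\ell_2,Y)$ and some $S\in L(Y)$.

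\textbf{Step 2: make the perturbation land in $\ell_2$.} The compact part $k$ need not take values in $\ell_2$, and Corollary \ref{exx} only controls compact perturbations of $t_n$ as maps $\ell_2\to\ell_2$. I would handle this as follows.
\begin{itemize}
\item The operator $\pi S_{|\ell_2}=\pi k$ is compact. I would write $k=k_1+k_2$ with $k_1\in K(\ell_2,\ell_2)$ and $k_2$ compact.
\item If $k_2$ is not automatically extendable, approximate instead. Since $\ell_2$ has the approximation property, $k$ is a norm limit of finite rank maps. Choose a finite rank $f$ with $\|k-f\|$ small. Then $f$ extends to $Y$ (finite rank maps always extend), so $T+(k-f)$ is still extendable, with a small-norm compact error.
\item A cleaner route is to make everything happen inside the first copy of $\ell_2$. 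I would work with the dual formulation, which is how Corollary \ref{exx} is phrased: extensions to $Y$ correspond by duality to liftings for $Y^*$. The duality argument behind Corollary \ref{exx} already controls extensions to $Y$ of compact perturbations $t_n+k'$ with $k'\in K(\ell_2,Y)$. This is because its proof (Proposition \ref{main}) only uses that $\|K_{|[f_i,i\in F_k]}\|\to 0$ along blocks, and that holds for any compact $K$ with values in the twisted sum.
\end{itemize}
So I will verify that Proposition \ref{main} and its dual tolerate compact perturbations with range in $Y$. In the quasilinear estimate, the $Y$-valued compact part contributes a term of the form $\|K_{|\text{block}}\|$ times constants. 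This gives: every extension of $T+k$ has norm at least $n$.

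\textbf{Step 3: conclude.} Step 2 gives, for each $n$, that any extension of a compact perturbation of $i t_n$ has norm at least $n$. This alone does not exclude extendability, since a single $T$ with no extension at all is what we want. To get one, glue the $t_n$:
\begin{itemize}
\item Take an orthogonal decomposition $\ell_2=\bigoplus_n H^{(n)}$ into spans of disjoint subsequences of the basis, on each of which the twisted sum is still non asymptotically trivial. This is possible by passing to subsequences, as in the proof of Proposition \ref{main}.
\item Let $T=\bigoplus_n t_n$ with $\|t_n\|\le 1$ (this normalization is available in the construction, where $Tf_i=e_i$ is an isometry on blocks). Then $T$ is bounded.
\item If $T+k$ extended to some $S$, then restricting to $H^{(n)}$ and composing with the appropriate projections (bounded, since the pieces are spanned by basis subsequences) gives an extension of a compact perturbation of $t_n$ on the twisted sum over $H^{(n)}$, with norm at most $C\|S\|$. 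Taking $n>C\|S\|$ contradicts Step 2.
\end{itemize}

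\textbf{Main obstacle.} The hardest part is this last restriction step. We need the twisted sum over $H^{(n)}$ to be a complemented (or at least extension-compatible) piece of $Y$ with uniform constants, so that an extension of $T+k$ to $Y$ induces an extension over $H^{(n)}$. I would try to get this from the quasilinear estimates directly: redo the $\nabla$-computation of Proposition \ref{main} with the blocks $F_k$ chosen inside $H^{(n)}$ for $T$ itself. This avoids any complementation argument and gives $c_n\le 2k_2c(\Omega)+(C+2k_2)\|\widetilde T\|$ for every $n$, which is impossible for a fixed $\|\widetilde T\|$.
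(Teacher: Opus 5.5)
Once you drop the detours you abandon yourself (the single $t_n$ of Step 1 and the projection-based restriction to $H^{(n)}$), your argument is the paper's proof: one operator containing, for every $n$, infinitely many block pairs of the type in Proposition \ref{main}; the $\nabla$-estimate of that proposition, run in $Y^*$ for each block size, rules out liftings of all compact perturbations, and one concludes by duality. The one inaccurate justification is your claim that $Y$-valued compact perturbations are harmless because Proposition \ref{main} only uses $\|K_{|[f_i,\, i\in F_k]}\|\to 0$: after dualizing, such a perturbation gives an operator on $Y^*$ that need not leave the subspace invariant, so instead apply the operator to the differences $\ell(\sum_i\varepsilon_i y_i)-\sum_i\varepsilon_i\ell(y_i)$ of lifts $\ell(y)=(\Phi y,y)$ ($\Phi$ the quasilinear map of $Y^*$), which lie in the subspace so that the extra terms cancel by linearity, and normalize the lifts of the $f_i$ to be weakly null so that the compact part becomes negligible on them --- a step the paper also leaves inside ``by duality''.
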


\begin{proof}  
We consider a partition of $\N$ in finite sets $F_n$ such that each possible even cardinality is repeated infinitely many times, and sequences $(e_n)_n$ and $(f_n)_n$ which are orthonormal and mutually disjointly supported in $\ell_2$,
such that
$(e_i)_{i \in \cup_{|F_k|=n} F_k }$ and
$(f_i)_{i \in  \cup_{|F_k|=n} F_k }$ of the type considered in Proposition \ref{main}. We then define $T$ such that $T(f_i)=e_i$ for all $i \in \cup_k F_k$.
It follows that no compact perturbation of $T$ admits a lifting to $Y^*$. The result then follows by duality. 
\end{proof}

\begin{corollary}\label{inequiv} For every $n$, there exists an operator $t_n$ on $\ell_2$ such that $t_n \in ext(\ell_2,Z_2)$, $\|t_n\| \leq 1$, but any extension of a compact perturbation of $t_n$ to $Z_2$ has norm at least $n$.
\end{corollary}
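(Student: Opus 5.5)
We derive the statement from Corollary \ref{exx} applied to $Y=Z_2$, using the self-duality of Kalton--Peck space. The plan has three steps: check the hypotheses of Corollary \ref{exx}, normalize the operator it produces, and pass from extensions to $Y^*$ to extensions to $Z_2$.

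First we check the hypotheses. By the duality of Kalton--Peck \cite[Theorem 5.1]{KaltonPeck}, $Z_2^*\simeq Z_2$ through the bilinear form $B$. Under this identification, the canonical copy of $\ell_2$ in $Z_2^*$ (the annihilator of the copy of $\ell_2$ in $Z_2$, i.e. the dual of the quotient) corresponds to the canonical copy $\{(x,0)\}$ in $Z_2$. Indeed, $B((w,0),(x,0))=0$, so $\{(x,0)\}\subseteq Z_2$ is exactly the annihilator of $\{(w,0)\}$. Hence extending an operator $t:\ell_2\to\ell_2\subseteq Z_2^*$ to $Z_2^*$ is the same problem as extending it to $Z_2$, up to a fixed isomorphism constant $D$ that comes from $B$ and the renorming of the quasi-norm.

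Next, $\Omega_2$ is symmetric, because the Kalton--Peck map commutes with permutations exactly. It is also non-trivial and singular. So, as noted after the definition of asymptotic triviality, $Z_2$ is not asymptotically trivial: any $n$-dimensional block of the appropriate form can be moved far out by symmetry, and supersingularity of $\Omega_2$, which is the standard estimate $\nabla\Omega_2\sim\sqrt n\log n$ on averages of $n$ unit basis vectors, is enough. By the previous paragraph, $Z_2^*$ is then not asymptotically trivial either.

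Second, we apply Corollary \ref{exx} with the constant $nD\,\|\cdot\|$-adjusted. Concretely, given $n$, we apply Proposition \ref{main} (through its dual form) with target constant $n'=nDM$. The map it produces is of the form $T$ with $Tf_i=e_i$, where $(f_i)$ and $(e_i)$ are orthonormal and disjointly supported; it is defined by this formula and set to $0$ on the orthogonal complement of $[f_i]$. Such a map is a partial isometry, so $\|t_n\|\le1$ automatically. Symmetry of $\Omega_2$ and Lemma \ref{extension} make $t_n$ extendable, which gives $t_n\in ext(\ell_2,Z_2)$. Any extension to $Z_2$ of a compact perturbation $t_n+k$ transfers to an extension to $Z_2^*$ with norm multiplied by at most $D$, so it must have norm at least $n'/D\ge n$.

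The main obstacle is the duality bookkeeping. We must make sure that ``extension to $Y^*$'' in Corollary \ref{exx} really corresponds to extension to $Z_2$ of the same operator $t_n$ on the canonical $\ell_2$, and not to its adjoint on the quotient. If the identification instead swaps subspace and quotient, we would apply Proposition \ref{main} directly, which gives liftings, and then dualize. The adjoint of a lifting is an extension of the adjoint operator. So we would define $t_n:=T^*$, which is again a partial isometry ($e_i\mapsto f_i$), with norm at most $1$, and symmetric, hence extendable. Compact perturbations are preserved under adjoints, so the lower bound of order $n$ on extensions persists.
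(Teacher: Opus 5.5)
You follow the paper's route. Its proof also just specializes Proposition \ref{main} / Corollary \ref{exx} to $Z_2$, using that $\Omega_2$ is symmetric and that $Z_2$ is self-dual. Your bookkeeping in the last paragraph is a correct way to spell out the ``simple duality argument'': take $t_n=T^*$, i.e.\ $e_i\mapsto f_i$, so that adjoints of liftings become extensions, with compact perturbations preserved under adjoints. Two points need correcting, though.

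First, $\Omega_2$ is \emph{not} supersingular. The estimate $\frac12\sqrt n\log n$ for $n$ disjoint unit vectors shows only that \emph{some} $n$-dimensional subspaces are far from trivial. Supersingularity demands this of \emph{all} of them. On Rademacher-like vectors, Khintchine's inequality gives $\nabla\le K_2\sqrt n$, so $\Omega_2$ is uniformly trivial on suitable $n$-dimensional blocks. This is precisely what supplies the $(f_i)$. If $\Omega_2$ were supersingular, every restriction of it would be too, and the non-supersingular piece $H_2$ used in the proof of Proposition \ref{main} would not exist. Non-asymptotic triviality itself is fine and follows from the unit-vector estimate plus symmetry, so just drop the supersingularity claim.

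Second, $\|t_n\|\le1$ is not automatic from Proposition \ref{main} as proved. There the $e_i$ are only normalized, and $T$ is ``any operator with $Tf_i=e_i$'', so no norm bound is given. Your partial-isometry argument works only because, for $Z_2$, one can use the explicit orthonormal choices of \cite{CCFM}, which is exactly what the paper's proof invokes:
\begin{itemize}
\item for $(e_i)$, a subsequence of the unit vector basis, giving $c(n)=\frac12\log n$;
\item for $(f_i)$, Rademacher-like vectors.
\end{itemize}
With these choices, block-repetition by symmetry and Lemma \ref{extension} give extendability as you say.
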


\begin{proof} Indeed the Kalton-Peck map is symmetric. Actually 
in this case, one can refer to \cite{CCFM} for an explicit choice of $(e_n)_n$ and $(f_n)_n$: $(e_n)_n$ may be chosen to be a subsequence of the canonical basis of $\ell_2$, implying $\nabla_{[e_i, i \in F_k]}\Omega=\frac{1}{2}\sqrt{n}\log n$ and therefore $c(n)=\frac{1}{2}\log n$; the sequence $(f_n)_n$ may be defined as Rademacher-like vectors on the basis (see \cite{CCFM} Section 3.2 for the formula)  satisfying
$\nabla_{[f_i, i \in F_k]}\Omega \leq K_2 \sqrt{n}$ as a consequence of Khintchine's inequality.
\end{proof}

As observed earlier, $t_n$ may be chosen to be a complex structure on $\ell_2$
 in Corollary \ref{inequiv}.

\begin{proposition}\label{notclosed} Let $Y=\ell_2 \oplus_\Omega \ell_2$ be a twisted Hilbert space in which $\Omega$ is subsymmetric and non-trivial. There is an operator $T$ on $\ell_2$ which is not in $ext_K(\ell_2,Y)$ but belongs to the $\|.\|$-closure of $ext(\ell_2,Y)$. 
\end{proposition}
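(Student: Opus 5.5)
We will build $T$ as an $\ell_2$-sum of blocks, using the finite-block machinery of Proposition \ref{main} and Corollary \ref{examp}. The target is a norm-convergent series $T=\sum_m 2^{-m} T_m$. Each $T_m$ should be extendable, have norm at most $1$, and act on disjoint blocks of the basis. The sum itself should not lie in $ext_K(\ell_2,Y)$.

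\textbf{Step 1: choosing the pieces.} We are in the subsymmetric case. Since $\Omega$ is non-trivial, a non-trivial subsymmetric quasilinear map is not asymptotically trivial (as recalled above). By duality, $Y^*$ is also a non-trivial (sub)symmetric twisted Hilbert space. So Corollary \ref{exx} applies for each $m$.

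Partition $\N$ into infinitely many infinite sets $N_m$, each spanned by a subsequence of the basis. For each $m$, apply Corollary \ref{exx} inside $[e_i,i\in N_m]$ with the integer $c\,4^m$, for a suitable constant $c$. This gives a norm-one extendable operator $t_m$ on $[e_i,i\in N_m]$. Extend $t_m$ by $0$ on the complement; this stays extendable, by Lemma \ref{extension} and subsymmetry. Any extension to $Y$ of any compact perturbation of $t_m$ then has norm at least $4^m$.

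Put $T=\sum_m 2^{-m}t_m$. Each partial sum is a finite combination of operators in $ext(\ell_2,Y)$, hence extendable. The series converges in norm, so $T$ lies in the $\|\cdot\|$-closure of $ext(\ell_2,Y)$.

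\textbf{Step 2: $T\notin ext_K(\ell_2,Y)$.} Suppose instead that $T+K$ has an extension $\widetilde T\in L(Y)$ with $K$ compact. Let $P_m$ be the basis projection onto $[e_i,i\in N_m]$. Because $\Omega$ is subsymmetric, Lemma \ref{extension} shows that $P_m$ has an extension $\widetilde P_m$ to $Y$ of norm at most a constant $D$ independent of $m$.

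Then $\widetilde T\widetilde P_m$ extends $(T+K)P_m=2^{-m}t_m+KP_m$. Multiplying by $2^m$, the operator $2^m\widetilde T\widetilde P_m$ extends $t_m+2^mKP_m$, which is a compact perturbation of $t_m$. Hence
\[
4^m\le 2^m D\|\widetilde T\|,
\]
which fails for $m$ large. This contradiction shows $T\notin ext_K(\ell_2,Y)$.

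An alternative to the projections is to rerun the $\nabla$-estimate of Proposition \ref{main} directly on the blocks $F_k\subseteq N_m$. In that version the compact perturbation is handled, as there, by taking $k$ large.

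\textbf{Main obstacle.} The delicate step is the uniform control $\|\widetilde P_m\|\le D$, that is, checking that the restrictions $(2^{-m}\Id)$ on $[e_i,i\in N_m]$ fit the hypotheses of Proposition \ref{main}. Extending basis projections uniformly requires subsymmetry of $\Omega$ (the twisting of $P_m$ is bounded by the symmetry constant). If that fails, the fallback is the direct $\nabla$-estimate on the blocks $F_k\subseteq N_m$, with the factor $2^{-m}$ absorbed into the choice $c_n\approx 4^m$.
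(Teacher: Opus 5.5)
Your proposal is correct and is essentially the paper's argument: a block-diagonal, norm-convergent sum of scaled copies of the operators $t_n$ from Corollary \ref{exx}, placed on disjoint subsequences of the basis, with weights that decay more slowly than the lower bounds on extension norms grow, and then localized to a single block through uniformly bounded extensions of coordinate maps supplied by subsymmetry. The differences are cosmetic: you use weights $2^{-m}$ against bounds $4^m$ instead of $n^{-1/2}$ against $n$, and you localize by composing on the right with an extension of $P_m$ rather than conjugating by extensions of $S_n$ and $S_n^{-1}P_n$, which rests on the same implicit uniform extendability the paper's proof uses.
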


\begin{proof}
We use $t_n$ from Corollary \ref{exx}, by defining a partition $\N=\cup_n I_n$, increasing bijections $\sigma_n: \N \rightarrow I_n$, the canonical associated isometric maps $S_n: \ell_2 \rightarrow [e_i, i \in I_n]$ and projections $P_n: \ell_2 \rightarrow [e_i, i \in I_n]$.
Then we let $T=\sum_n  n^{-1/2} S_nt_nS_n^{-1}P_n$. The partial sums defining $T$ are extendable, so
$T$ belongs to $clos(ext(\ell_2,Y))$, and 
if some operator $\widetilde{T}$ extends a compact perturbation of $T$ then $n^{1/2}S_n^{-1}\widetilde{T}S_n$ extends a compact perturbation of $t_n$, implying
that $\|\widetilde{T}\| \geq \sqrt{n}$ for all $n$; so no compact perturbation of $T$ is extendable.
\end{proof}

We are now able to answer negatively the first question of Kalton and Swanson from \cite{kaltonasymplectic}.

\begin{corollary}\label{nor}
$(L(Z_2)/S(Z_2),\|.\|_S)$ is not complete. Equivalently the image of $L(Z_2)$ by $\Lambda$
 is not closed in $L(H)$.  \end{corollary}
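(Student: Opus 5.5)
The plan is to reduce the statement to Proposition \ref{equivalence} and Proposition \ref{notclosed}. Kalton-Peck space $Z_2$ is a singular self-extension of $\ell_2$: $\Omega_2$ is singular by \cite[Theorem 6.4]{KaltonPeck}. So Proposition \ref{equivalence} applies with $Y=Z_2$, $p=2$. Its items (a) and (c) identify the two formulations in the corollary: completeness of $(L(Z_2)/S(Z_2),\|.\|_S)$ and closedness of $\Lambda(L(Z_2))$ in $L(H)$. This equivalence uses Corollary \ref{KS}, which says $\hat\Lambda$ is isometric for $\|.\|_S$. It therefore suffices to refute item (f), that is, to show that $ext_K(\ell_2,Z_2)$ is not norm closed in $L(\ell_2,Z_2)$.

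For this we will apply Proposition \ref{notclosed} with $\Omega=\Omega_2$. The Kalton-Peck map is symmetric, hence subsymmetric, and it is non-trivial. Proposition \ref{notclosed} then gives an operator $T$ on $\ell_2$, viewed in $L(\ell_2,Z_2)$ via the inclusion, with two properties: $T$ lies in the $\|.\|$-closure of $ext(\ell_2,Z_2)$, and $T\notin ext_K(\ell_2,Z_2)$. By Lemma \ref{blabla}(a), $ext(\ell_2,Z_2)\subseteq ext_K(\ell_2,Z_2)$, so $T$ lies in the closure of $ext_K(\ell_2,Z_2)$ but not in $ext_K(\ell_2,Z_2)$ itself. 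Hence (f) fails, and so (c) and (a) fail as well.

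The only point to check is that the hypotheses behind Proposition \ref{notclosed} hold for $Z_2$. That proposition relies on Corollary \ref{exx}, which needs the dual $Z_2^*$ to be non asymptotically trivial and $\Omega$ to be (sub)symmetric, so that the $t_n$ are extendable. Since $Z_2^*\simeq Z_2$ through the Kalton-Peck duality, and $\Omega_2$ is symmetric and non-trivial, $Z_2$ falls under example (a) of non-asymptotically trivial sums. Alternatively, one can invoke Corollary \ref{inequiv} directly: it supplies norm-one extendable operators $t_n$ such that every extension of a compact perturbation of $t_n$ has norm at least $n$. Plugging these into the block-diagonal sum $T=\sum_n n^{-1/2}S_nt_nS_n^{-1}P_n$ from the proof of Proposition \ref{notclosed} produces the required $T$. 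Here I expect the only care needed is that conjugating by the isometries $S_n$ preserves extendability, which holds by the symmetry of $\Omega_2$ through Lemma \ref{extension}.
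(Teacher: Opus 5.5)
Your proposal is correct and is essentially the paper's own proof, which derives the corollary from Proposition \ref{equivalence} (with $Y=Z_2$) and Proposition \ref{notclosed}, i.e.\ by refuting item (f). The paper also records a more direct variant that refutes item (d) instead: any extension $T_n$ of the operator $t_n$ from Corollary \ref{inequiv} satisfies $\|T_n\|_S=\|t_n\|_{\mathrm{Calkin}}\le 1$ while $\|T_n\|_{L(Z_2)/S(Z_2)}\ge n$, because strictly singular perturbations of $T_n$ restrict to compact perturbations of $t_n$.
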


\begin{proof} This is a consequence of Proposition \ref{equivalence} and Proposition \ref{notclosed}.
An equivalent and more illustrative argument is as follows, along the lines of Proposition \ref{principle}, which $Z_2$ satisfies: 
if $T_n$ is an extension of $t_n$ from Corollary \ref{inequiv}, then
$\|T_n\|_{L(Z_2)/S(Z_2)} \geq n$ (because every strictly singular perturbation of $T_n$ restricts to a compact perturbation of $t_n$). On the other hand, $\|T_n\|_S=\|t_n\|_{\mathrm{Calkin}} \leq 1$. Therefore $\|.\|_S$ is not equivalent to the quotient norm on $L(Z_2)/S(Z_2)$.
\end{proof}

It therefore follows that $Z_2$ provides a new example of algebra with inequivalent algebra norms. They are actually *-algebra norms, since the involution * is an isometry with respect to both of these norms.

\begin{theorem}\label{630} The *-algebra $L(Z_2)/S(Z_2)$ admits two inequivalent *-algebra norms. 
\end{theorem}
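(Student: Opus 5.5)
The two norms will be the quotient norm $\|.\|_{L(Z_2)/S(Z_2)}$ and the seminorm-induced norm $\|.\|_S$. Both are algebra norms on $L(Z_2)/S(Z_2)$. The quotient norm is the quotient of the operator norm, hence submultiplicative. For $\|.\|_S$, Proposition \ref{seminorm}(d) shows it is a norm on $L(Z_2)/S(Z_2)$ (it vanishes exactly on $S(Z_2)$) and submultiplicative. Inequivalence is exactly Corollary \ref{nor}. Indeed, the extensions $T_n$ of the operators $t_n$ of Corollary \ref{inequiv} satisfy $\|T_n\|_S=\|t_n\|_{\mathrm{Calkin}}\le 1$ by Corollary \ref{ssss}. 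On the other hand, $\|T_n\|_{L(Z_2)/S(Z_2)}\ge n$, because every strictly singular perturbation of $T_n$ restricts to a compact perturbation of $t_n$. Alternatively, one may invoke Proposition \ref{principle}.

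What remains is the $*$-structure. First, the symplectic involution $T\mapsto T^*$ on $L(Z_2)$ preserves $S(Z_2)$, so it descends to the quotient. To see this, write $T^*=L_\omega^{-1}T^+L_\omega$, where $L_\omega$ is an isomorphism $Z_2\to Z_2^*$. Since $Z_2$ is reflexive and Kalton--Peck spaces satisfy that the adjoint of a strictly singular operator is strictly singular, the conjugate $T^+$ is strictly singular whenever $T$ is. Hence $S(Z_2)^*=S(Z_2)$ and we get a conjugate-linear (or linear, in the real case) anti-multiplicative involution on the quotient. Compatibility with products, $(TU)^*=U^*T^*$, is formal from the defining identity $\omega(Tu,v)=\omega(u,T^*v)$.

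Next, the involution must be isometric, or at least continuous, for both norms. For $\|.\|_S$ this follows from Corollary \ref{KS}: $\hat\Lambda$ is an isometric $*$-homomorphism into $L(H)$, where the Hilbert adjoint is isometric, so $\|\tilde T^*\|_S=\|\Lambda(T)^*\|=\|\Lambda(T)\|=\|\tilde T\|_S$. The same conclusion can be obtained via $\beta$ and the relation $\beta_p(T)^*=\beta_q(T^*)$. For the quotient norm, $\|T^*\|\le \|L_\omega^{-1}\|\|L_\omega\|\|T\|$. Taking infima over strictly singular perturbations, and using that $S\mapsto S^*$ is a bijection of $S(Z_2)$, gives $\|\tilde T^*\|\le c\|\tilde T\|$. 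If one wants isometry, one can renorm $Z_2$ so that the symplectic form is compatible, e.g. $\|T\|'=\max(\|T\|,\|T^*\|)$ on the quotient, which is an equivalent $*$-algebra norm.

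The main obstacle is this last point: checking that the involution is genuinely isometric, or at least bounded, with respect to the quotient norm. I would first try the direct estimate above. If exact isometry is required, I would symmetrize the norm as indicated, which preserves both equivalence to the quotient norm and inequivalence to $\|.\|_S$.
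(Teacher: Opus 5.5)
Your proposal is correct and follows the paper's proof: the same two norms, inequivalence via Corollary \ref{nor}, isometry of the involution for $\|.\|_S$ via Corollary \ref{KS}, and invariance of $S(Z_2)$ under the symplectic adjoint. That last fact is exactly Kalton--Swanson's Lemma 9 and is not a consequence of reflexivity; it can also be read off from $\|T^*\|_S=\|\beta(T^*)\|=\|\beta(T)^*\|=\|T\|_S$. The only deviation is in making the quotient norm a $*$-norm: the paper invokes $\|S^*\|=\|S\|$, whereas you use only boundedness of $T\mapsto T^*$ and symmetrize to $\max(\|a\|,\|a^*\|)$, which is submultiplicative, equivalent to the quotient norm, and suffices for the statement.
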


\begin{proof} The $\|.\|_S$ induces an *-algebra norm by Corollary \ref{KS}. That the usual quotient norm also induces a *-algebra norm follows from the fact that for an operator $S \in L(Z_2)$, $S$ is strictly singular if and only if its symplectic adjoint $S^{*}$ is strictly singular (\cite[Lemma 9]{kaltonasymplectic}) and the fact that $\|S^*\|=\|S\|$.
\end{proof}

That these are two inequivalent algebra norms is more generally true of all subsymmetric singular twisted Hilbert spaces; however we do not know whether they are *-algebra norms in this more general setting. 

\

Note that because of the *-isometric embedding $\widetilde{\beta}$, the normed algebra $(L(Z_2)/S(Z_2),\|.\|_S)$ has a completion which is a $C^*$-algebra (in particular it satisfies the relation $\|a^*a\|=\|a\|^2$). According to classical terminology,\cite{Rickart} Chapter 4 p181, this means that $(L(Z_2)/S(Z_2),\|.\|)$ is an ``$A^*$-algebra", for which $\|.\|_S$ is an ``auxiliary norm". This implies in particular that the quotient norm is the unique Banach algebra norm up to equivalence \cite{Rickart} Corollary 4.1.18.

In this line our techniques also give an answer to a second question of Kalton-Swanson, which also appears as Castillo-González-Pino \cite{raul}. They asked whether $L(Z_2)/S(Z_2)$  is *-isomorphic to a $B^*$-algebra (resp. $C^*$-algebra),  equipped with the usual quotient norm $\|.\|$. Note that this would follow from $L(Z_2)/S(Z_2),\|.\|_S$ being complete (since then $\|.\|$ and $\|.\|_S$ would be equivalent), but we know this is not the case.

\begin{theorem}\label{631} The *-algebra $L(Z_2)/S(Z_2)$, equipped with the quotient norm, is not *-isomorphic to a $B^*$-algebra.
\end{theorem}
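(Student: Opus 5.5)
Argue by contradiction. Suppose $\Phi:(L(Z_2)/S(Z_2),\|.\|)\to (B,\|.\|_B)$ is a $*$-isomorphism onto a $B^*$-algebra, i.e.\ a bounded, boundedly invertible algebra isomorphism preserving the involution. The aim is to show that $\|.\|_S$ and the quotient norm would then be equivalent, contradicting Corollary \ref{nor} and Theorem \ref{630}.

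\emph{Step 1 (spectral radius is preserved).} In any $B^*$-algebra, self-adjoint elements $h$ satisfy $\|h\|_B=r_B(h)$: iterating $\|h^2\|=\|h\|^2$ gives $\|h\|_B=\lim\|h^{2^k}\|_B^{2^{-k}}$. Hence for every $a$,
\[
\|\Phi(a)\|_B^2=\|\Phi(a)^*\Phi(a)\|_B=r_B(\Phi(a^*a)).
\]
Spectral radius is an algebraic invariant of a unital Banach algebra, and $\Phi$ is an algebra isomorphism between Banach algebras. So $r_B(\Phi(a^*a))=r(a^*a)$, where $r$ is the spectral radius in the Banach algebra $(L(Z_2)/S(Z_2),\|.\|)$. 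By equivalence of $\|.\|$ with $\|\Phi(\cdot)\|_B$ we get $\|a\|^2\le C\,r(a^*a)$ for a constant $C$.

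\emph{Step 2 (compare with $\|.\|_S$).} Gelfand's formula holds for any algebra norm, so I would estimate $r$ from above via $\|.\|_S$. Since $\|.\|_S\le\|.\|$,
\[
r(a^*a)=\lim\|(a^*a)^k\|^{1/k}.
\]
This is not directly bounded by the $\|.\|_S$-limit, because $\|.\|_S$ is a smaller norm. Instead I would use the completion $\mathcal C$ of $(L(Z_2)/S(Z_2),\|.\|_S)$, which is a $C^*$-algebra via $\widetilde\beta$. Here the needed fact is spectral permanence for $A^*$-algebras with auxiliary norm: Rickart Chapter 4 gives that for an $A^*$-algebra, the spectrum of a self-adjoint element relative to the Banach algebra coincides with its spectrum in the auxiliary completion, at least when the algebra is $*$-isomorphic to a $B^*$-algebra. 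Equivalently, one can argue directly. If $A=(L(Z_2)/S(Z_2),\|.\|)$ is $*$-isomorphic to a $B^*$-algebra, then $A$ is $*$-semisimple with a unique $C^*$-norm, $\|a\|_B$. The auxiliary norm $\|.\|_S$ is a $C^*$-seminorm on $A$ (it satisfies $\|a^*a\|_S=\|a\|_S^2$ via $\widetilde\beta$). The maximal $C^*$-seminorm on a Banach $*$-algebra is bounded by $\sqrt{r(a^*a)}$, while on a $C^*$-algebra every $C^*$-norm is at least the one it injects into. Since $\|.\|_S$ is a genuine norm on $A$ and $C^*$-norms on a $C^*$-algebra are unique, $\|\Phi(a)\|_B=\|a\|_S$ for all $a$.

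\emph{Step 3 (contradiction).} By Step 2, $\|.\|_S$ is equivalent to the quotient norm. This contradicts Corollary \ref{nor}, using the operators $T_n$ extending $t_n$ from Corollary \ref{inequiv}, which have $\|T_n\|_S\le1$ but quotient norm at least $n$.

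\emph{Main obstacle.} The delicate point is Step 2, specifically the uniqueness claim. The injective $*$-homomorphism from the $C^*$-algebra $B$ into $\mathcal C$, given by $\Phi(a)\mapsto a$, must be shown to be isometric. For $C^*$-algebras, an injective $*$-homomorphism into another $C^*$-algebra is isometric. Here the map is only defined on $B$ with values in the dense subalgebra, so I first check it is continuous. This holds because $\|a\|_S\le\|a\|\le C'\|\Phi(a)\|_B$. It then extends to a $*$-homomorphism $B\to\mathcal C$, which is automatically contractive. Injectivity on $B$ holds because it is the identity on $A$ and $B=\Phi(A)$, so the map is isometric, giving $\|\Phi(a)\|_B=\|a\|_S$. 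In the real case I would first pass to complexifications, or use the real version of the fact that injective $*$-homomorphisms between $C^*$-algebras are isometric, which holds for real $C^*$-algebras as well.
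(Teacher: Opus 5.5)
Your argument is correct over $\mathbb{C}$. It has a genuine gap over $\mathbb{R}$, and the theorem covers real $Z_2$ too: Section 6 allows both fields, and in the real case the $B^*$ formulation is strictly stronger than the $C^*$ one.

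\textbf{The real case.} A real $B^*$-algebra need not be a real $C^*$-algebra, and for such algebras the principle ``an injective $*$-homomorphism into a $C^*$-algebra is isometric'' is false. Take the real disk algebra $\{f\in C(\overline{\mathbb{D}}): f \text{ holomorphic in } \mathbb{D},\ f(\bar z)=\overline{f(z)}\}$ with the identity involution. It satisfies $\|f^*f\|_\infty=\|f\|_\infty^2$. The map $f\mapsto f|_{[-1/2,1/2]}$ is an injective unital $*$-homomorphism into the real $C^*$-algebra $C([-1/2,1/2],\mathbb{R})$, yet it sends $z$ (norm $1$) to an element of norm $1/2$.

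Passing to complexifications does not rescue this. The complexification of a real $B^*$-algebra need not carry any $C^*$-norm: already for $\mathbb{C}$ with the identity involution it contains a nonzero $x$ with $x^*x=0$. So your appeal to ``the real version of the fact'' presupposes that $B$ is a real $C^*$-algebra, which is not given. To run your argument over $\mathbb{R}$ you would first have to prove this, for instance by showing that $1+a^*a$ is invertible in $L(Z_2)/S(Z_2)$ for every $a$. Your proposal neither states nor proves anything of this kind.

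\textbf{The complex case.} Here your route genuinely differs from the paper's. The map $\widetilde\beta\circ\Phi^{-1}$ is an injective $*$-homomorphism between $C^*$-algebras, so you identify any $C^*$-norm equivalent to the quotient norm with $\|\cdot\|_S$ itself. The theorem then follows formally from Corollary \ref{nor}, using any family witnessing the inequivalence of norms. Step 1 and the spectral-permanence discussion are not needed. No extension of $\psi$ is needed either, since $\Phi$ is onto $B$.

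The paper instead uses that the $t_n$ of Corollary \ref{inequiv} can be chosen to be isometric complex structures on $\ell_2$. Then $\beta(T)$ is unitary on $W$, so $a=\widetilde T$ satisfies $a^*a=1$ by injectivity of $\widetilde\beta$, while $\|a\|\ge n$. In any $B^*$-algebra this gives $\|\Phi(a)\|_B^2=\|\Phi(a^*a)\|_B=\|1\|_B=1$, which is incompatible with $\|a\|\ge n$ for large $n$. That argument uses only the identity $\|x^*x\|=\|x\|^2$, so it is quantitative and works verbatim over $\mathbb{R}$. This is exactly what your approach lacks.
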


\begin{proof} Let $t$ be an isometric complex structure on $\ell_2$ which extends to a map $T$ on $Z_2$ and such that, if $a=\tilde{T} \in L(Z_2)/S(Z_2)$, $\|a\| \geq n \in \N$ (Corollary \ref{inequiv}, the comment after it, and the proof of Corollary \ref{nor}).
Then $\beta(T)$ is a unitary operator on $W$, therefore
$\beta(T^*T)=\beta(T)^*\beta(T)=Id_W$. Therefore
$\beta(T^*T-Id)=0$ and therefore we have $a^*a=1$, and $\|a^*a\|=1$. On the other hand $\|a\|^2 \geq n^2$.
So $\|.\|$ cannot be $n$-equivalent to a $B^*$-algebra norm (and therefore neither to a $C^*$-algebra norm).
\end{proof}

\section{Questions and comments}

About images of complex structures by $\beta$, we have the following observation, following from  Proposition \ref{distance}:
\begin{remark} Let $Y$ be a singular twisted Hilbert space, then 
 $I:=\beta(\tilde{\mathcal I}(Y))$ and $J:=\beta(\tilde{\mathcal J}(Y))$ are sets of complex structures on the Hilbert space $W$ with $d(I,J) \geq 2$. 
\end{remark}

Our proof that $\|.\|_S$ is a *-algebra norm on $L(Z_2)/S(Z_2)$ relies heavily on specific properties of $Z_2$, and so it is natural to ask:

\begin{question}
Is $\|.\|_S$  is a *-algebra norm on $L(Y)/S(Y)$ whenever 
$Y$ is a subsymmetric singular twisted Hilbert space?  
\end{question}

Since the twisted Hilbert space $X$ considered by J. Suarez \cite{Suarez}, induced by the interpolation of the 2-convexified Tsirelson space with its dual, is asymptotically trivial, our extension techniques do not seem to apply, and therefore we also ask:
\begin{question} Is $L(X)/S(X)$ complete for $\|.\|_S$? Or is the quotient norm on $L(X)/S(X)$ equivalent to a $C^*$-norm? 
\end{question}

Recall  \cite{CCFM} that no complex structure on $\ell_2$ can extend to a complex structure on a hyperplane of $Z_2$. More generally:

\begin{question} If $j$ is a complex structure on $\ell_2$ and $J$ a complex structure on a hyperplane of $Z_2$ containing $\ell_2$, does it follow that $J_{|\ell_2}-j$ is not strictly singular? Or even that $\|J_{|\ell_2}-j\|_S \geq c >0$ for some $c$ depending on $\|j\|$ and $\|J\|$?
\end{question}

A more thorough study of the seminorm $\|.\|_S$ remains to be done:

\begin{question}
Characterize Banach spaces $X$ for which the seminorm $\|.\|_S$ is equivalent, or even isometrically equivalent, to the quotient norm on $L(X)/S(X)$.    
\end{question}

\section{Acknowledgments}
The authors thank Niels Laustsen for comments and references concerning the problem of uniqueness of algebra norms.

 \par 
  \bigskip
  \textsc{\footnotesize W. Cuellar Carrera, 
Departamento de Matem\'atica, Instituto de Matem\'atica e
Estat\'\i stica, Universidade de S\~ao Paulo, rua do Mat\~ao 1010,
05508-090 S\~ao Paulo SP, BRAZIL}

\textit{E-mail address}: \texttt{cuellar@ime.usp.br}

  \par 
  \bigskip
  \textsc{\footnotesize V. Ferenczi, 
Departamento de Matem\'atica, Instituto de Matem\'atica e
Estat\'\i stica, Universidade de S\~ao Paulo, rua do Mat\~ao 1010,
05508-090 S\~ao Paulo SP, BRAZIL,
and \\
Equipe d'Analyse Fonctionnelle,
Institut de Math\'ematiques de Jussieu,
Sorbonne Universit\'e - UPMC,
Case 247, 4 place Jussieu,
75252 Paris Cedex 05,
FRANCE}

\textit{E-mail address}: \texttt{ferenczi@ime.usp.br}

\end{document}